%%%%%%%%%%%%%%%%%%%%%
%%%%%%%%%%%%%%%%%%%%%
%\documentclass[draft]{amsart}
%\documentclass[reqno,a4paper]{amsart}
\documentclass[11pt]{article}
% SIAM Shared Information Template
% This is information that is shared between the main document and any
% supplement. If no supplement is required, then this information can
% be included directly in the main document.

% Packages and macros go here
\usepackage{bm}
\usepackage{psfrag}
\usepackage{xcolor}
\usepackage[all,cmtip,line]{xy}
\usepackage[normalem]{ulem}
\usepackage{tikz} 
\usepackage[utf8]{inputenc}
\usepackage{pgfplots}
\usepackage{scalerel}
\usepackage{amssymb}
\usepackage{caption}
\usepackage{amsmath}
\usepackage{mathabx}
\usepackage{subfigure}
\usepackage[shortlabels]{enumitem}
\usepackage{relsize}
\usepackage{cleveref}
\usepackage{graphicx}
% Inverted breve
\usepackage[T3,T1]{fontenc}
\usepackage{amsthm}

\usepackage{cases}% http://ctan.org/pkg/cases

\ifpdf
  \DeclareGraphicsExtensions{.eps,.pdf,.png,.jpg}
\else
  \DeclareGraphicsExtensions{.eps}
\fi

%%%%%%%%%%%%%%%%%%%%%%%%%%%%%%%%%%%%%%%%%%%
\DeclareSymbolFont{tipa}{T3}{cmr}{m}{n}
\DeclareMathAccent{\invbreve}{\mathalpha}{tipa}{16}

%CS Macros 
%\newtheorem{theorem}{Theorem}[section]
%\theoremstyle{definition}
%\newtheorem{thm}{Theorem}
%\newtheorem{lemma}[thm]{Lemma}
%\newtheorem{corollary}[thm]{Corollary}
%\newtheorem{theorem}[thm]{Theorem}
%-------------------
\newtheorem{thrm}{Theorem}[section]
\newtheorem{lmm}[thrm]{Lemma}
\newtheorem{crllr}[thrm]{Corollary}
\newtheorem{prpstn}[thrm]{Proposition}

\newtheorem{assumption}[thrm]{Assumption}
%------------------------
%------------------------
\newtheorem{dfntn}[thrm]{Definition}

\newtheorem{prblm}[thrm]{Problem}
\newtheorem{rmrk}[thrm]{Remark}

%

%\newtheorem{proposition}[thm]{Proposition}  
%\newtheorem{definition}[thm]{Definition}  
%\newtheorem{problem}[thm]{Problem}
%\newtheorem{definition}[theorem]{Definition}
%\newtheorem{example}[thm]{Example}
%\newtheorem{xca}[thm]{Exercise}
%\theoremstyle{remark}
%\newtheorem{remark}[thm]{Remark}
%\newtheorem{assumption}[thrm]{Assumption}

%CJ Macros
%\theoremstyle{definition}
%\newtheorem{obs}{Observation}[section]
%\newtheorem{prop}{Property}[section]
%\newcommand{\norm}[2]{\left\lVert #1\right\rVert_{#2}}

%\newcommand{\bL}{\boldsymbol{L}}
\newcommand{\p}[1]{\left\langle #1\right\rangle}
\newcommand{\pr}[1]{\left( #1\right)}

\newcommand{\modulo}[1]{\left\vert #1\right\vert}

\hyphenation{pa-ra-me-tri-zed}

%cs color for changes by CJH
\colorlet{MAGENTA}{magenta}

\definecolor{forestgreen}{rgb}{0.13, 0.55, 0.13}

%\newcommand{\RR}{\mathbb{R}}

%%%%%%%%%%%%%%%%%%%%%%%%%%%%%%%%%%%%%%%%%%%%%
\newcommand{\be}{\begin{equation}}
\newcommand{\ee}{\end{equation}}
%color for changes by CJ
%\definecolor{forest}{rgb}{0.3,0.4,0.1}
%\definecolor{Ora}{cmyk}{0.2, 0.6, 0.8, 0}
% definitions

% cal letters

\newcommand{\C}{\mathcal C}
\newcommand{\cD}{\mathcal D}
\newcommand{\E}{\mathcal E}
\newcommand{\cE}{\mathcal E}

\newcommand{\cO}{\mathcal O}

\newcommand{\cV}{\mathcal V}

\newcommand{\cG}{\mathcal G}

\newcommand{\V}{{\mathsf V}}
\newcommand{\W}{{\mathsf W}}
\newcommand{\Kk}{{\mathsf K}}

% % dual products

%\newcommand{\ar}{\rangle\!\rangle}

%\newcommand{\s}{\alpha}

%macros for mathbb symbols
\newcommand{\IC}{{\mathbb C}}

\newcommand{\IN}{{\mathbb N}}

\newcommand{\IR}{{\mathbb R}}

\newcommand{\IZ}{{\mathbb Z}}
% dislaystyle 

%
%\newcommand{\matr}[1]{\bm#1}

%\newcommand{\p}{\check{\bm p}}

%norms
\newcommand{\norm}[2]{\left\lVert #1\right\rVert_{#2}}

%

% domain

%\newcommand{\dom}{\mathrm{D}}

%\newcommand{\Dnul}{{\dom_\bnul}} 

% Operators

\newcommand{\Id}{\operatorname{\mathsf{I}}}

\renewcommand{\d}{\!\!\operatorname{d}}
% imaginary unit

% Bilinear forms 

\newcommand{\fpcs}[1]{\operatorname{\mathsf{f}\pc}}
% \newcommand{\fde}[1]{\operatorname{\mathsf{f}\de_T}\left(#1\right)}
% \newcommand{\fdes}{\operatorname{\mathsf{f}\de_T}}

% \newcommand{\badiel}[2]{\operatorname{\mathsf{a}\de_T}\left(#1,#2\right)}
% \newcommand{\fdiel}[1]{\operatorname{\mathsf{f}\de_T}\left(#1\right)}

% Right-hand sides for Fr\'echet derivatives

% spaces
% \newcommand{\hcurlbf}[2][]{{\bf H}_{#1}(\mathrm{curl}; #2)}

% BEM Macros
%

%\newcommand{\curl}{\operatorname{\bm{curl}}}

  % angular brackets for duality 
  % restriction

\newcommand{\pc}{}%}}

  % array with parentheses
  % dotless j in math mode

\newcommand{\scurl}{\operatorname{curl}}

\newcommand{\bn}{\bm{n}}

\newcommand{\bz}{\bm{z}}

\newcommand{\bx}{\bm{x}}
\newcommand{\by}{\bm{y}}

%fields

% \newcommand{\etot}{\bU^{\mathrm{tot}}}

%\newcommand{\href}{\VH^{\mathrm{ref}}}
%\newcommand{\htot}{\VH^{\mathrm{tot}}}

\newcommand{\half}{\frac{1}{2}}

%traces
\newcommand{\tD}{\gamma_{\mathrm{D}}}
\newcommand{\tN}{\gamma_{\mathrm{N}}}

\def\Xint#1{\mathchoice
   {\XXint\displaystyle\textstyle{#1}}%
   {\XXint\textstyle\scriptstyle{#1}}%
   {\XXint\scriptstyle\scriptscriptstyle{#1}}%
   {\XXint\scriptscriptstyle\scriptscriptstyle{#1}}%
   \!\int}
\def\XXint#1#2#3{{\setbox0=\hbox{$#1{#2#3}{\int}$}
     \vcenter{\hbox{$#2#3$}}\kern-.5\wd0}}

\def\dashint{\Xint-}

% Declare title and authors, without \thanks

%\newcommand{\TheAuthors}{J.~Pinto, R.~Aylwin and C.~Jerez-Hanckes}

% Sets running headers as well as PDF title and authors
%\headers{Fast Solver for Quasi-Periodic Helmholtz Scattering}{\TheAuthors}

% Title. If the supplement option is on, then "Supplementary Material"
% is automatically inserted before the title.

%\thanks{Submitted to the editors \today.
%\funding{This work was partially funded by Fondecyt Regular 1171491 and by grants Conicyt-PFCHA/Doctorado Nacional/2017-21171791 and 2017-21171479.}}
%}

% Authors: full names plus addresses.
%\author{
%  Jos\'e Pinto\thanks{School of Engineering, Pontificia Universidad Cat\'olica de Chile, Santiago, Chile (\email{jspinto@uc.cl}, \email{rdaylwin@uc.cl}).}
%  \and 
%  Rub\'en Aylwin\footnotemark[2]%{School of Engineering, Pontificia Universidad Cat\'olica de Chile, Santiago, Chile ().}
%  \and
 %Carlos Jerez-Hanckes\thanks{Faculty of Engineering and Sciences, Universidad Adolfo Iba\~nez, Santiago, Chile (\email{carlos.jerez@uai.cl}).}
 %}

%%% Local Variables: 
%%% mode:latex
%%% TeX-master: "ex_article"
%%% End: 

\usepackage{a4wide}
\usepackage{authblk}

\begin{document}

\title{Fast Solver for Quasi-Periodic 2D-Helmholtz Scattering in Layered Media\thanks{This work was partially funded by Fondecyt Regular 1171491 and by grants Conicyt-PFCHA/Doctorado Nacional/2017-21171791 and 2017-21171479.}}
\author[1]{Jos\'e Pinto}
\author[2]{Rub\'en Aylwin}
\author[3]{Carlos Jerez-Hanckes}
 %\authormark{1,*}
%Gerardo Silva-Oelker,\\%\authormark{2}
%Carlos Jerez-Hanckes %\authormark{3}
 %and Patrick Fay%\authormark{4}
%}
{\tiny 
\affil[1]{Department of Electrical Engineering, Pontificia Universidad Cat\'olica de Chile, Santiago, Chile.}
\affil[2]{Department of Electrical Engineering, Pontificia Universidad Cat\'olica de Chile, Santiago, Chile.}
\affil[3]{Faculty of Engineering and Sciences, Universidad Adolfo Ib\'a\~nez, Santiago, Chile.}
}

%\thanks{This work was partially funded by Fondecyt Regular 1171491 and by grants Conicyt-PFCHA/Doctorado Nacional/2017-21171791 and 2017-21171479.}
%
%\author{Jos\'e Pinto}\address{School of Engineering, Pontificia Universidad Cat\'olica de Chile, Santiago, Chile (\email{jspinto@uc.cl\ \&\ rdaylwin@uc.cl}).}
%\author{Ruben Aylwin}\sameaddress{1}
%\author{Carlos Jerez-Hanckes}\address{Faculty of Engineering and Sciences, Universidad Adolfo Ib\'a\~nez, Santiago, Chile (\email{carlos.jerez@uai.cl}).}
%\subjclass{65N35, 65N38, 45M15, 78A45}
%\keywords{Boundary Integral Equations, Quasi-periodic scattering, Spectral Elements, Gratings, Multi-layered domain}
\maketitle

\begin{abstract}
We present a fast spectral Galerkin scheme for the discretization of
boundary integral equations arising from two-dimensional Helmholtz transmission problems in
multi-layered periodic structures or gratings. Employing suitably parametrized Fourier basis and excluding Rayleigh-Wood anomalies,
we rigorously establish the well-posedness of both continuous and discrete
problems, and prove super-algebraic error convergence rates for the proposed scheme. Through several numerical examples, we confirm our findings and show performances competitive to those attained via Nystr\"om methods. 
\end{abstract}

\section{Introduction}
A vast number of scientific and engineering applications rely on harnessing acoustic and electromagnetic wave diffraction by periodic and/or multilayered domains. Current highly demanding operation conditions for such devices require solving thousands of specific settings for design optimization or the quantification of shape or parameter uncertainties in the relevant quantities of interest, challenging the scientific computing community to continuously develop ever more efficient, fast and robust solvers (cf.~\cite{Bao:2004,chen2007design,loewen2018diffraction,silva2018quantifying, SJF19} and references therein). Assuming impinging time-harmonic plane waves, scattered and transmitted fields have been solved by a myriad of mathematical formulations and associated solution schemes. These range from volume variational formulations to various boundary integral representations and equations (cf.~\cite{ammari1998scattering,ammari2001analysis,
Bao:2000,barnett2011new,dobson1992time,nakata1990boundary}), pure or coupled implementations of finite and boundary element methods (cf.~\cite{Ammari:2008,ammari2001analysis, Elschner:1998,Nedelec:1991,silva2018quantifying} or \cite[Chapter 5]{popov2012gratings}) and Nystr\"om methods \cite{bruno2009efficient,bruno2016superalgebraically,Barnett:2015,greengard2014fast,liu2016efficient}.

In this work, we build upon our theoretical review given in \cite{APJ:2019} and present a spectral Galerkin method for solving second-kind direct boundary integral equations (BIEs) for the Helmholtz transmission problem for two-dimensional, periodic multi-layered
gratings with smooth interfaces. Contrary to the low-order local basis functions used in the standard boundary element method, spectral bases are composed of high-order polynomials whose support lie on the whole scatterer boundary or on large portions of it. Successfully employed on two- and three-dimensional scattering problems \cite{PaperB,HU1995340,graham2002fully}, the main advantage of a spectral discretization is the ability to converge at a super-algebraic rate whenever solutions are smooth enough. Hence, our proposed method can in practice
compete with Nystr\"om methods while simultaneously inheriting
all of the theoretical aspects of classical Galerkin methods.

In two dimensions, spectral
methods are closely related to the theory of periodic pseudo-differential operators \cite{saranen2013periodic}, since the discretization
through spectral elements can be interpreted as a truncation of the
associated Fourier series where the action of the
operators is well understood. We show that wave scattering by periodic domains is closely connected to the bounded domain case, making it possible
to reuse almost all the pseudo-differential operator theory for our analysis. 
Key to our analysis are the
results in \cite{Nedelec:1991,Starling:1994,Elschner:1998} regarding the unique solvability
and eigenvalues of the associated volume problem. From here, we deduce
that our BIE is uniquely solvable except at a countable set of wavenumbers composed of Rayleigh-Wood
frequencies---wavenumbers for which the sum defining the quasi-periodic
Green's function is not convergent---and of eigenvalues of the
Helmholtz transmission problem. Mindless of the several remedies
developed to tackle Rayleigh-Wood anomalies through BIEs
\cite{Barnett:2015,bruno2014rapidly,bruno2017three,bruno2017rapidly}, we
choose to avoid them as they are not captured by our previous analysis in \cite{APJ:2019}.

Our discretization method employs a quasi-periodic basis so that techniques forcing the quasi-periodicity of the discrete solutions are not necessary
(cf.~\cite{greengard2014fast,zhang2019fast}). Instead, an accurate approximation of the quasi-periodic Green's function is required in order to extract its Fourier coefficients
through the fast Fourier transform (FFT). Moreover, we prove that the chosen discretization basis enjoys a super-algebraic convergence rate on the degrees of freedom, which we then confirm through numerical experiments. In \cite{Nguyen:2012}, a similar quasi-periodic exponential basis was employed to approximate solutions of a volume integral formulation.

The article is structured as follows. Section \ref{sec:notation} presents
the notation used throughout as well as the required quasi-periodic Sobolev spaces setting following \cite{APJ:2019}. In Section \ref{sec:wavepropagationvolume} we state the Helmholtz transmission problem for a multi-layered grating and study its solvability. Section \ref{sec:BIEBIO} is concerned with
the properties of quasi-periodic boundary
integral operators (BIOs) along with an existence and uniqueness
result for our BIEs. Section \ref{sec:specmethod} provides rigorous
error convergence rates of the spectral method and briefly describes the
numerical algorithm used to compute the matrix entries associated with
each integral operator. Numerical results are discussed in
Section \ref{sec:numexam}, followed by concluding remarks on Section \ref{sec:conclusions}.

%%%%%%%%%%%%%%%%%%%%%%%%%%%%%%%%%%%%
\section{Notation and Functional Space Setting}
\label{sec:notation}
%%%%%%%%%%%%%%%%%%%%%%%%%%%%%%%%%%%%
\subsection{General Notation}
We denote the imaginary unit $\imath$. Boldface symbols will denote vectorial quantities and will use greek and roman letters for data over boundaries and volume, respectively. Canonical vectors in $\IR^2$ are denoted $\bm{e_1},\bm{e_2}$ respectively. Also, we make use of the symbols $\lesssim$, $\gtrsim$ and $\cong$ to avoid specifying constants irrelevant for the corresponding analysis.

Let $H$ be a given Banach space. We shall denote its norm
as $\norm{\cdot}{H}$ and its dual space by $H'$ (set of antilinear functionals over $H$) with dual product denoted by $\p{\cdot,\cdot}$.
If $H$ is a Hilbert space, the inner product between two
of its elements, $x$ and $y$, is denoted as $\pr{x,y}_H$.
Moreover, if $H$ is a Hilbert space over the complex field, the inner
product will be understood in the anti-linear sense.

For an open domain $\Omega{\subset}\IR^2$, its boundary shall be
denoted as $\partial\Omega$. Moreover, for any $\cO{\subset}\IR^2$ such
that $\Omega\subseteq\cO$, we introduce the closure of $\Omega$
relative to $\cO$ as $\overline\Omega^{\cO}:=\overline\Omega\cap\cO$
and the boundary of $\Omega$ relative to $\cO$ as
$\partial^{\cO}\Omega:=\overline{\Omega}^{\cO}\setminus\Omega$.

For $n\in{\IN}_0:=\IN\cup\{0\}$, we denote by $\C^n(\Omega)$ the set of scalar
functions over $\Omega$ with complex values and continuous derivatives
up to order $n$. $\C^\infty(\Omega)$ refers to the space
of functions with infinite continuous derivatives over $\Omega$.
We shall also make use of the following subset of $\C^{\infty}(\Omega)$:
$$\cD(\Omega):=\{u\in\C^\infty(\Omega)\ :\ \text{supp }u\subset\subset \Omega\}.$$
The space of $p$-integrable functions (for $p\geq 1$) with complex values over $\Omega$
is denoted as $L^p(\Omega)$. 

We say that a one-dimensional Jordan curve $\Gamma$ is of class
 $\C^{r,1}$, for $r\in\IN_0$, if it may be parametrized by a function
$\bm{z}:(0,2\pi)\to\Gamma$ which has $r$ Lipschitz-continuous
derivatives and a non-vanishing tangential vector. The first derivative of the parametrization is denoted as $\dot{\bm{z}}$.
 Moreover, we say
$\Gamma$ is of class $\C^{\infty}$ if it is of class $C^{r,1}$ for every $r \in \IN_0$ (we will also use the notation $\mathcal{C}^{\infty,1}$ to refer to the same class).

Throughout the following sections, we will consider periodic
geometries along $\bm{e_1}$ with a fixed period of
$2 \pi$. Moreover, we say that a continuous function $f$
is a $\theta$-quasi-periodic function if, 
\begin{align*}
f(\bx+2\pi \bm{e}_1)=e^{\imath 2\pi\theta}f(\bx)\quad\forall\bx\in\IR^2,
\end{align*} 
where the quasi-periodic shift $\theta$ is always assumed to be in $[0,1)$.
Finally, we define the canonic periodic cell on $\IR^2$ as $\mathcal{G} := (0,2\pi) \times \IR$. 

\subsection{Quasi-periodic Sobolev Spaces}

We denote by $\cD_\theta(\IR^2)$ the space
of $\theta$-quasi-periodic functions in $\C^\infty(\IR^2)$
that vanish for large $\vert x_2\vert$, and denote by
$\cD_\theta'(\IR^2)$ the space of $\theta$-quasi-periodic distributions,
which can be seen as the dual space of $\cD_\theta(\IR^2)$
(cf.~\cite[Proposition 2.4]{APJ:2019}).
For $\cG$ as before, we introduce $\cD_\theta(\cG)$
the space of restrictions to $\cG$ of
elements in $\cD_\theta(\IR^2)$. Moreover, 
for any open domain $\Omega\subset\cG$
we define $\cD_\theta(\Omega)$ as the set of elements of
$\cD_\theta(\cG)$ with compact support on $\Omega$ and
$\cD'_\theta(\Omega)$ as the space of
elements of $\cD'_\theta(\cG)$ restricted to $\cD_\theta(\Omega)$. In what follows, for all $j\in\IZ$ we define $j_\theta:=j+\theta$. 

\begin{prpstn}[Proposition 2.6 in \cite{APJ:2019}]
\label{prop:FourierSeries}
Every $u\in\cD_\theta(\IR^2)$ can
be represented as a Fourier series, i.e.
\begin{align*}
u(\bx)=\sum\limits_{j\in\IZ}u_j(x_2)e^{\imath j_\theta x_1}\quad \text{with}\quad u_j(x_2):=\frac{1}{2\pi}\int_{0}^{2\pi}e^{-\imath j_\theta x_1}u(\bx) \ \d\!x_1,
\end{align*}
so that $u_j\in\cD(\IR)$. On the other hand, every element $F \in  \cD'_\theta(\IR^2)$ can be identified with a formal Fourier series given by 
\begin{align*}
\sum\limits_{j\in\IZ}F_je^{\imath j_\theta x_1},\quad \text{with}\quad {F}_j:=\begin{cases}\cD(\IR)&\to\IC\\
v&\mapsto F(v(x_2)e^{\imath j_\theta x_1})
\end{cases},
\end{align*} 
where  $F_j\in\cD'(\IR)$ for all $j\in\IZ$ and $F(u)=\sum\limits_{j\in\IZ}F_j(u_j)$.
\end{prpstn}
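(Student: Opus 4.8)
The plan is to reduce the $\theta$-quasi-periodic setting to the classical $2\pi$-periodic one by a multiplicative shift, apply the standard theory of Fourier series, and then transfer the resulting expansion back. Concretely, given $u\in\cD_\theta(\IR^2)$, I would first set $v(\bx):=e^{-\imath\theta x_1}u(\bx)$ and observe that the quasi-periodicity relation collapses to genuine $2\pi$-periodicity in $x_1$, since $e^{-\imath\theta(x_1+2\pi)}e^{\imath 2\pi\theta}=e^{-\imath\theta x_1}$. As $v$ is smooth and $2\pi$-periodic in $x_1$ for each fixed $x_2$, classical Fourier analysis yields $v(\bx)=\sum_{j\in\IZ}v_j(x_2)e^{\imath j x_1}$ with $v_j(x_2)=\frac{1}{2\pi}\int_0^{2\pi}e^{-\imath j x_1}v(\bx)\,\d x_1$. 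Undoing the shift gives precisely $u(\bx)=\sum_{j\in\IZ}u_j(x_2)e^{\imath j_\theta x_1}$ with $u_j=v_j$ as in the statement, because $e^{\imath j x_1}e^{\imath\theta x_1}=e^{\imath j_\theta x_1}$.

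The two properties still to be checked are the regularity of the coefficients and the mode of convergence of the series. That each $u_j$ belongs to $\cD(\IR)$ follows by differentiating under the integral sign (legitimate since $u\in\C^\infty$ and the $x_1$-integration is over the compact interval $[0,2\pi]$) for smoothness, and from the hypothesis that $u$ vanishes for large $|x_2|$ for the compact support. The main technical point---and the step I expect to be the real obstacle---is to upgrade the pointwise Fourier expansion to convergence of the partial sums $\sum_{|j|\le N}u_j(x_2)e^{\imath j_\theta x_1}$ to $u$ in the topology of $\cD_\theta(\IR^2)$, i.e. uniformly together with all derivatives on $\overline{\cG}\cap\{|x_2|\le M\}$ for the fixed $M$ outside which $u$ vanishes. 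This rests on the super-algebraic decay of the coefficients: integrating by parts $k$ times in $x_1$ (the boundary terms vanishing by $2\pi$-periodicity of $v$) gives $(\imath j)^k v_j^{(\alpha)}(x_2)=\frac{1}{2\pi}\int_0^{2\pi}e^{-\imath j x_1}\partial_{x_2}^\alpha\partial_{x_1}^k v(\bx)\,\d x_1$, so that $|\partial_{x_2}^\alpha u_j(x_2)|\lesssim |j|^{-k}$ uniformly in $x_2$ for every $k$ and every $\alpha$. This uniform rapid decay makes every differentiated series absolutely and uniformly convergent on the compact strip, which is exactly convergence in $\cD_\theta(\IR^2)$.

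For the dual statement, I would first verify that the prescription $F_j(v):=F(v(x_2)e^{\imath j_\theta x_1})$ makes sense: for $v\in\cD(\IR)$ the function $v(x_2)e^{\imath j_\theta x_1}$ is smooth, vanishes for large $|x_2|$, and is $\theta$-quasi-periodic because $e^{\imath j_\theta(x_1+2\pi)}=e^{\imath 2\pi\theta}e^{\imath j_\theta x_1}$ (using $e^{\imath 2\pi j}=1$), hence lies in $\cD_\theta(\IR^2)$. Linearity and sequential continuity of $F_j$ are then inherited from those of $F$ together with the continuity of the embedding $v\mapsto v(x_2)e^{\imath j_\theta x_1}$, giving $F_j\in\cD'(\IR)$. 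Finally, the identity $F(u)=\sum_{j\in\IZ}F_j(u_j)$ is obtained by applying $F$ to the series for $u$ and exchanging $F$ with the infinite sum; this exchange is justified precisely by the $\cD_\theta(\IR^2)$-convergence established above, since $F$ is continuous, so that $F(u)=\sum_{j}F(u_j(x_2)e^{\imath j_\theta x_1})=\sum_j F_j(u_j)$.
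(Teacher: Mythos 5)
Your proof is correct and follows essentially the same route as the source: this paper imports the statement verbatim from \cite{APJ:2019} without reproving it, and the argument there is precisely yours — remove the phase $e^{\imath\theta x_1}$ to reduce to classical $2\pi$-periodic Fourier expansion in $x_1$, use integration by parts in $x_1$ to get super-algebraic decay of $\partial_{x_2}^\alpha u_j$ uniformly on the support strip (hence convergence of all differentiated partial sums, i.e.\ convergence in the topology of $\cD_\theta(\IR^2)$, the $|j_\theta|^\beta$ growth from $x_1$-derivatives being absorbed by the rapid decay), and then obtain $F_j\in\cD'(\IR)$ and $F(u)=\sum_{j\in\IZ}F_j(u_j)$ by continuity of $F$ applied to the convergent partial sums. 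No gaps.
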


Let $s\in\IR$. We define the  $\theta$-quasi-periodic Sobolev
space of order $s$ on $\cG$ as follows,
\begin{align*}
H^s_\theta(\cG):=\left\lbrace F\in\cD'_\theta(\IR^2)\ \bigg|\ \sum\limits_{j\in\IZ}\int_{\IR}(1+j_\theta^2+\modulo{\xi}^2)^{s}\modulo{\widehat{F}_j(\xi)}^2\ \d\!\xi<\infty\right\rbrace,
\end{align*}
wherein $\widehat{F}_j$ is the Fourier transform (in distributional sense \cite[Section 2.4]{Steinbach:2007aa}) of $F_j$, defined as in Proposition \ref{prop:FourierSeries}.
Additionally, we introduce the common notation $L^2_\theta(\cG):=H^0_\theta(\cG)$
and note that, as in the standard case, $H^s_\theta(\cG)$ is a Hilbert space
\cite[Proposition 2.8]{APJ:2019}. Furthermore, for an open proper subset
$\Omega$ of $\cG$, we define $H^s_\theta(\Omega)$ as the Hilbert space of
restrictions to $\Omega$ of elements of $H^s_\theta(\cG)$ (see \cite[Section 2]{APJ:2019} and \cite[Chapter 3.6]{McLean:2000}).
Finally, local Sobolev spaces on $\Omega$ are defined as
$$H^s_{\theta,\text{loc}}(\Omega):=\left\lbrace{u\in\cD'_\theta(\Omega)\ :\ u\in H^s_\theta\left(\Omega\cap\left\{\bx\in\cG\; :\;\modulo{x_2}<R\right\}\right)\quad \forall\ R>0}\right\rbrace.$$
\subsection{Quasi-periodic Sobolev Spaces on Boundaries and Traces}
We begin by considering spaces of periodic functions over $\IR$. As in \cite[Definition 8.1]{kress:2014}, 
\cite[Section 5.3]{saranen2013periodic}, we define Sobolev spaces on
$[0,2\pi]$ of order $s\geq 0$ as follows,
\begin{align*}
H^s[0,2\pi]:=\left\lbrace \phi\in L^2((0,2\pi))\ :\ \sum\limits_{j\in\IZ}(1+j^2)^s\modulo{\phi_j}^2<\infty \right\rbrace,
\end{align*}
where $\{\phi_j\}_{j\in\IZ}$ are the Fourier coefficients of $\phi$. Quasi-periodic spaces of order $s\geq 0$ over $(0,2\pi)$
are defined from $H^s[0,2\pi]$ straightforwardly, i.e.,
\begin{align*}
H^s_\theta[0,2\pi]:=\left\lbrace \phi\in L^2((0,2\pi))\ :\ e^{-\imath \theta t}\phi(t)\in H^s[0,2\pi] \right\rbrace.
\end{align*}
Both $H^s[0,2\pi]$ and $H^s_\theta[0,2\pi]$ are Hilbert spaces, as are their respective dual spaces, denoted respectively $H^{-s}[0,2\pi]$ and $H^{-s}_\theta[0,2\pi]$ (see \cite[Theorem 8.10]{kress:2014}
and \cite[Theorem 2.20]{APJ:2019}). Moreover, for $s\in\IR$, the inner
product and norm of $H^s_\theta[0,2\pi]$ are given by:
\begin{align*}
\pr{u,v}_{H^s_\theta[0,2\pi]}:=\sum\limits_{j\in\IZ}(1+j_\theta^2)^su_{j,\theta}\overline{v_{j,\theta}}\quad\text{and}\quad\norm{u}{H^s_\theta[0,2\pi]}:=\pr{u,u}_{H^s_\theta[0,2\pi]}^{\half},
\end{align*}
wherein, for positive $s$, we define
\begin{align*}
u_{j,\theta}:=\frac{1}{2\pi}\pr{u(t),e^{\imath j_\theta t}}_{L^2((0,2\pi))},
\end{align*}
and the product is extended through duality to negative $s$ (cf.~\cite[Theorems 2.16 and 2.20]{APJ:2019}).

We continue by considering boundaries which are constructed as the single period of a periodic Jordan curve of class $\mathcal{}C^\infty$. Let $\Gamma$ be one of such curves and let $\bm{z} :(0,2\pi) \rightarrow \Gamma$ be a parametrization of $\Gamma$.  Then, for any $s\geq 0$, we define the
$\theta$-quasi-periodic Sobolev space of order $s$ on $\Gamma$ as 
\begin{align*}
H^s_\theta(\Gamma):=\left\lbrace u\in L^2_\theta(\Gamma)\ \vert\ (u\circ\bz)(t)\in H_\theta^s[0,2\pi] \right\rbrace.
\end{align*}
We define $H^{-s}_\theta(\Gamma)$ as the completion of
$L^2_\theta(\Gamma)$ under the norm
given by
\begin{align*}
\norm{u}{H^{-s}_\theta(\Gamma)}:=\mathlarger{\Vert}(u\circ\bz)\norm{\dot{\bz}}{\IR^2}\mathlarger{\Vert}_{H^{-s}_\theta[0,2\pi]}.
\end{align*}
Norms and inner products for these spaces are
given through their respective pullbacks to $H^s_\theta[0,2\pi]$ and
$H^{-s}_\theta[0,2\pi]$. Moreover, $H^{-s}_\theta(\Gamma)$ is identified with the
dual space of $H^{s}_\theta(\Gamma)$ \cite[Theorem 2.26]{APJ:2019} where the duality is given by the extension of the following anti-liner form: 
\begin{align}
\label{eq:dualprod}
\p{ \lambda , \vartheta}_\Gamma := \pr{\lambda,\vartheta}_{L^2_\theta (\Gamma)}, \quad \lambda, \vartheta \in L^2_\theta (\Gamma).
\end{align}
We also define the following space of smooth functions over $\Gamma$,
\begin{align*}
\cD_\theta(\Gamma):=\left\lbrace \phi:\Gamma\to\IC\ {\bigg|}\ (\phi\circ\bz)(t)=\sum\limits_{j=-n}^{n}\phi_je^{\imath j_\theta t},\text{ for some }n\in\IN\right\rbrace,
\end{align*}
which is dense on $H^s_\theta(\Gamma)$ for any $s\in\IR$.
Finally, we introduce trace operators acting on quasi-periodic
Sobolev spaces. Let $\Omega$ be a proper open subset of $\cG$ such that $\partial^{\cG}\Omega=\Gamma$,
we define the following operators for $s>\half$:
\begin{align*}
\tD :H^s_{\theta}(\Omega)\to H^{s-\half}_\theta(\Gamma),\quad \tD^{e} :H^s_{\theta}(\cG\setminus\overline\Omega^{\cG})\to H^{s-\half}_\theta(\Gamma),
\end{align*}
that extend the notion of the restriction operator
$u\mapsto u|_{\Gamma}$ to quasi-periodic Sobolev spaces
\cite[Theorem 2.29]{APJ:2019}.
In this context, $\tD $ and $\tD ^e$ are, respectively, the
interior and exterior Dirichlet traces. Analogously, for $s>\frac{3}{2}$,
we denote the interior and exterior Neumann traces on $\Omega$ as
\begin{align*}
\tN :H^s_{\theta}(\Omega)\to H^{s-\frac{3}{2}}_\theta(\Gamma),\quad \tN^{e} :H^s_{\theta}(\cG\setminus\overline\Omega^{\cG})\to H^{s-\frac{3}{2}}_\theta(\Gamma),
\end{align*}
extending the normal derivative $u\mapsto \nabla u|_{\Gamma}\cdot\bn$,
where $\bn$ is---for both traces---the unitary normal
exterior to $\Omega$. 
Moreover, introducing the subspace of
elements of $H^1_{\theta}(\Omega)$ with integrable Laplacian,
\begin{align*}
H^s_{\theta,\Delta}(\Omega):=\left\lbrace u\in H^1_\theta(\Omega)\ :\ \Delta u\in L^2_\theta(\Omega) \right\rbrace,
\end{align*}
the Neumann trace may be extended as
\begin{align*}
\tN :H^1_{\theta,\Delta}(\Omega)\to H^{-\half}_\theta(\Gamma),\quad \tN^{e} :H^1_{\theta,\Delta}(\cG\setminus\overline\Omega^{\cG})\to H^{-\half}_\theta(\Gamma),
\end{align*}
through integration by parts
(cf.~\cite[Section 2]{APJ:2019}). All the previous
results concerning trace operators follow analogously
(with obvious modifications) for both local spaces---in the
case that $\Omega$ is unbounded---and if $\Omega$ is the
bounded space between two non-intersecting periodic
curves $\Gamma_1$ and $\Gamma_2$. Finally, we denote
the following vector operators
\begin{align*}
\bm{\gamma} u:=(\tD u,\tN u)^t,\quad\bm{\gamma}^e u:=(\tD^{e} u,\tN^{e} u)^t\quad\text{and}\quad
[\bm{\gamma} u]_{\Gamma}:=\bm{\gamma}^e u-\bm{\gamma} u,
\end{align*}
as the interior, exterior and jump trace vectors on $\Gamma$, respectively.

\section{Helmholtz problem in periodic layered media}
\label{sec:wavepropagationvolume}
%%%%%%%%%%%%%%%%%%%%%%%%%%%%%%%%%%%%
%%%%%%%%%%%%%%%%%%%%%%%%%%%%%%%%%%%%
\subsection{Geometric Setting}
\label{sec:geo}
%%%%%%%%%%%%%%%%%%%%%%%%%%%%%%%%%%%%
We seek to establish a
boundary integral representation for scattered and
transmitted acoustic or electromagnetic fields resulting from plane waves impinging a multi-layered grating. The domain is described by
$M \in\IN$ finite non-intersecting periodic surfaces
$\{\widetilde{\Gamma}_i\}_{i=1}^{M}$---ordered downwards---separating $M+1$ periodic domains $\{\widetilde\Omega_i\}_{i=0}^{M}$
such that for $0<i<M$ it holds $\partial\widetilde\Omega_i= 
\widetilde\Gamma_i\cup\widetilde\Gamma_{i+1}$,
$\partial\widetilde\Omega_0=\widetilde\Gamma_1$ and
$\partial\widetilde\Omega_{M}=\widetilde\Gamma_M$ (see Figure \ref{fig:example}). Moreover,
while all domains $\{\widetilde\Omega_i\}_{i=0}^M$ are unbounded
along $\bm{e}_1$---due to their periodicity---only two of
them, namely $\widetilde\Omega_0$ and $\widetilde\Omega_M$,
are unbounded in the second spatial dimension (along $\bm{e}_2$). The restrictions
of the aforementioned domains and surfaces to the periodic cell
$\cG$ are denoted by:
$$\Omega_i:=\widetilde\Omega_i\cap\cG
\quad \forall\ i\in\{0,\ldots, M\}, \quad\Gamma_j:=\widetilde\Gamma_j\cap\cG
\quad \forall\ j\in\{1,\ldots, M\}.$$ Additionally, we fix $H>0$ so that
\begin{align*}
\bigcup\limits_{i=1}^{M-1}\overline{\Omega}^\cG_{i}\subset \left\lbrace \bx\in\cG\ :\ \modulo{x_2}<H \right\rbrace
\end{align*}
holds. We will assume that the interfaces $\Gamma_i$ , $i \in \{1,\hdots,M\}$ are all Jordan curves of class $\mathcal{C}^\infty$. Furthermore, for each $i\in\{1,\ldots,M\}$,
the exterior and interior trace operators on $\Gamma_i$ are understood as
\begin{gather*}
\tD ^e:H^1_\theta(\Omega_{i-1})\to H^{\half}_\theta(\Gamma_i),\quad\tD :H^1_{\theta}({\Omega_i})\to H^{\half}_\theta(\Gamma_i),\\
\tN ^e:H^1_{\theta,\Delta}(\Omega_{i-1})\to H^{-\half}_\theta(\Gamma_i)\quad\text{and}\quad\tN :H^1_{\theta,\Delta}({\Omega_i})\to H^{-\half}_\theta(\Gamma_i),
\end{gather*}
and the normal vector on $\Gamma_i$ is chosen to point towards $\Omega_{i-1}$.

\begin{figure}[t]
\centering
\includegraphics[scale=0.4]{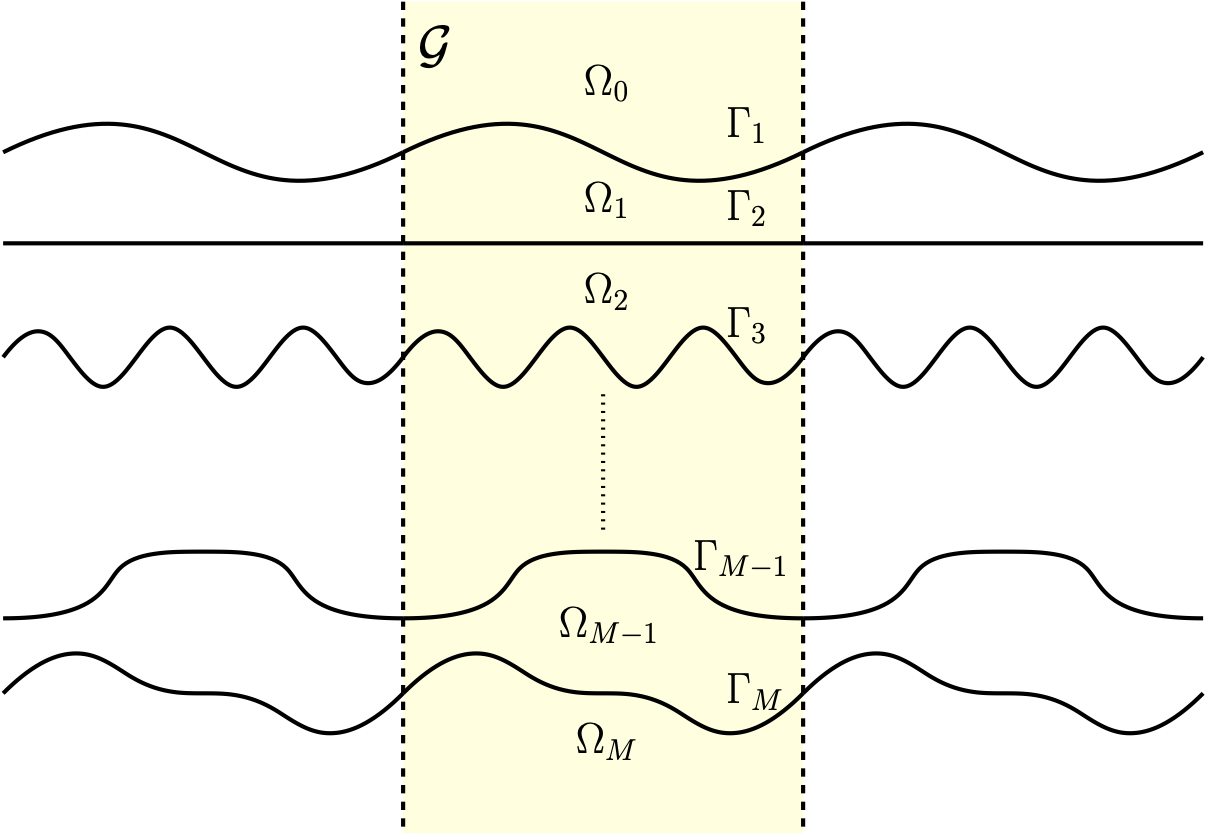}
\caption{Example of a multi-layered grating.
$\cG$ is highlighted and the dotted lines represent its boundaries at $0$ and $2\pi$.}
\label{fig:example}
\end{figure}
%%%%%%%%%%%%%%%%%%%%%%%%%%%%%%%%%%%%
\subsection{Helmholtz transmission problem on periodic media}
\label{ssec:Helmholtz_per}
%%%%%%%%%%%%%%%%%%%%%%%%%%%%%%%%%%%%
For a time-dependence $e^{-\imath\omega t}$ for some frequency $\omega>0$, let the grating described in the previous
subsection be illuminated by an incident plane wave,
\begin{align*}
u^\text{(inc)}(\bx):=e^{\imath \mathbf{k_0}\cdot\bx}=e^{\imath(k_{0,1}x_1+k_{0,2}x_2)},
\end{align*}
where $\mathbf{k}_0=(k_{0,1},k_{0,2})\in\IR^2$. Furthermore, we  denote $k_0:=\modulo{\mathbf{k_0}}$.

For $i=0,\hdots,M$, the material filling each domain $\Omega_i$ is assumed to be homogeneous
and isotropic with refraction index $\eta_i$ --we assume $\eta_0\equiv 1$-- and wavenumber $k_i:=\omega c_i^{-1}=\eta_ik_0$, where $c_i$ is the wave speed in
$\Omega_i$. Throughout this section, we fix $\theta$
as the unique real in $[0,1)$ such that $\theta=k_{0,1}+n$ for some integer $n$ and, for all $j\in\IZ$, we define 
\begin{align}
\label{eq:betacofs}
\beta^{(0)}_j:=\begin{cases} \sqrt{k_0^2-j_{\theta}^2}&\text{ if }k_0^2-j_{\theta}^2\geq0\\
\imath\sqrt{j_{\theta}^2-k_0^2}&\text{ if }k_0^2-j_{\theta}^2<0
\end{cases},\quad
\beta^{(M)}_j:=\begin{cases} \sqrt{k_M^2-j_{\theta}^2}&\text{ if }k_M^2-j_{\theta}^2\geq0\\
\imath\sqrt{j_{\theta}^2-k_M^2}&\text{ if }k_M^2-j_{\theta}^2<0
\end{cases},
\end{align}
where as before $j_\theta = j+\theta$. With these definitions, we can state our volume problem as follows.
\begin{prblm}[Helmholtz transmission problem]
\label{prob:VolProb}
We seek $u^\text{(tot)}$ defined as
\begin{align*}
u^{\text{(tot)}}:=\begin{cases}
u^{\text{(inc)}}+u_0& \text{in }\Omega_0,\\
u_i &\text{in }\Omega_\text{ for }i\in\{1,\hdots, M\},
\end{cases}
\end{align*}
where $u_0\in H^1_{\theta,\text{loc}}(\Omega_0)$, $u_{M}\in H^1_{\theta,\text{loc}}(\Omega_{M})$ and $u_{i}\in H^1_{\theta}(\Omega_{i})$ for all $1\leq i \leq M-1$,
such that 
\begin{subequations}
\label{eq:HelProb}
\begin{align}
\label{eq:Heleq}
&-(\Delta+k_i^2) u^{\text{(tot)}} = 0\quad\text{in }\Omega_i \cap \left\lbrace\bx\in\cG\; :\; |x_2| \leq H\right\rbrace,\quad \forall\ i\in\{0,\hdots,M\},
\end{align}
\begin{align}
\label{eq:transm}
&\left[\bm{\gamma} u^{\text{(tot)}}\right]_{\Gamma_i} =0\quad\text{on }\Gamma_i,\quad \forall\ i\in\{1,\hdots,M\},
\end{align}
\begin{align}
\label{eq:radup}
&u_0(\bx) =\sum\limits_{j\in\IZ}u_{j}^{(0)}e^{\imath\left(\beta^{(0)}_j(x_2-H)+j_\theta x_1\right)}\quad\text{for } x_2\geq H,
\end{align}
\begin{align}
\label{eq:raddown}
&u_m(\bx) =\sum\limits_{j\in\IZ}u_{j}^{(M)}e^{\imath\left(\beta^{(M)}_j(x_2+H)+j_\theta x_1\right)}\quad\text{for } x_2\leq -H.
\end{align}
\end{subequations}
\end{prblm}

Equation \eqref{eq:transm} represents the continuity of Dirichlet and Neumann traces across each interface. This condition can be generalized to include different transmission coefficients without much effort. The last two conditions, namely \eqref{eq:radup} and \eqref{eq:raddown}, correspond to
radiation conditions for $u_0$ and $u_m$, also known as the Rayleigh-Bloch expansions (cf.~\cite{Nedelec:1991} for a detailed discussion),
where $\{u_{j}^{(0)}\}_{j\in\IZ}$ and
$\{u_{j}^{(M)}\}_{j\in\IZ}$ are the corresponding Rayleigh coefficients.

Through an analogous analysis to
that presented in \cite[Section 3]{Elschner:1998}, one finds
that---for a fixed choice of geometries $\{\Gamma_i\}_{i=1}^{M}$
and refraction indices $\{\eta_i\}_{i=1}^{M}$---Problem
\ref{prob:VolProb} has a unique solution for all but a
countable number of wavenumbers $k_0$ as
all wavenumbers $k_i$ for $i\in\{1,\hdots,M\}$ depend
on $k_0$.

\begin{assumption}\label{ass:k0}
The wavenumber $k_0$ is such that Problem \ref{prob:VolProb}
has a unique solution.
\end{assumption}

We shall make no further analysis of the volume problem as stated above, 
and limit ourselves to \cite{Bao:1997,Bao:1995aa,kirsch1993diffraction,Nedelec:1991,
Starling:1994,zhang1998uniqueness,Elschner:1998} and references therein
for more detailed analyses of the radiation condition of similar problems.

%%%%%%%%%%%%%%%%%%%%%%%%%%%%%%%%%%%%
\section{Boundary integral equations}
\label{sec:BIEBIO}
%%%%%%%%%%%%%%%%%%%%%%%%%%%%%%%%%%%%
Following our previous work \cite{APJ:2019}, we 
introduce the quasi-periodic Green's function and recall
some relevant properties. We then define the quasi-periodic single and double layer potentials
and BIOs spanning from taking their respective traces on
the periodic boundaries $\{\Gamma_i\}_{i=1}^{M}$. To 
conclude this section, we present an integral representation
for the fields $\{u_i\}_{i=0}^{M}$ and a proof of unisolvency
for the corresponding BIE.
As before, $\theta$ will denote the quasi-periodic shift, which is assumed to be in $[0,1)$.

\subsection{Quasi-Periodic Fundamental Solution}
Consider a positive wavenumber $k\in\IR$, we recall the definition of the Rayleigh-Wood frequencies.
\begin{dfntn}
We say $k>0$ is a Rayleigh-Wood frequency, if there is $j\in\IZ$, such that 
\begin{align}
\label{eq:woodgeneraleq}
|j+\theta|=k,
\end{align}
where $\theta$ is the previously fixed quasi-periodic shift. 
\end{dfntn}

These frequencies correspond to values where the quasi-periodic Green's function can not be represented in a traditional manner. While a number of alternatives have been developed to circumvent this issue (e.g., \cite{bruno2017rapidly,bruno2017three,Barnett:2015}) their analysis is out of the scope of our current work. Hence, in what follows, we will work under the following assumption over the wavenumber $k$. 

\begin{assumption}\label{ass:woodgeneral}
The wavenumber $k>0$ is not a 
Rayleigh-Wood frequency for the given $\theta \in [0,1)$ .
\end{assumption} 
Under Assumption \ref{ass:woodgeneral} we can define the $\theta$-quasi-periodic Green's function as (cf.~\cite{Nedelec:1991,Linton:1998} and
references therein)
\begin{align}
G^k_{\theta}(\bx,\by):=\lim\limits_{m\rightarrow\infty}\sum\limits_{n=-m}^m
e^{-\imath 2{\pi}n\theta}G^k(\bx+2\pi n\bm{e}_1,\by), \label{eq:qpgreenfunc}
\end{align}
for all $\bx$, $\by$ in $\IR^2$ such that $\bx-\by\neq 2\pi n\bm{e}_1$ for all $n\in\IZ$,
wherein $G^k(\bx,\by)$ is the fundamental solution for the Helmholtz equation
with wavenumber $k$, namely,
\begin{align*}
G^k(\bx,\by)= \frac{\imath}{4} H_0^{(1)}(k\| \bx-\by\|_{\IR^2}),
\end{align*}
where $H_0^{(1)}(\cdot)$ denotes the zeroth-order first kind
Hankel function. Moreover, the quasi-periodic Green's function is a fundamental solution of the Helmholtz equation in the following sense: 
\begin{align*}
-(\Delta_{{\by}} {+} k^2)G^k_{\theta}(\bx,\by)=\sum\limits_{n\in\IZ}\delta(\bx+2\pi n\bm{e}_1)e^{\imath2\pi n\theta}
\end{align*}
for all $\bx\in\IR^2$ and satisfies the radiation condition
specified in the preceding section
(cf.~\cite[Proposition 3.1]{Nedelec:1991}).

\begin{rmrk}
If Assumption \ref{ass:woodgeneral} is not met, the sum in \eqref{eq:qpgreenfunc} fails to converge for any pair of $\bm{x}$, $\bm{y}\in\IR^2$. 
\end{rmrk}

%%%%%%%%%%%%%%%%%%%%%%%%%%%%%%%%%%%%
\subsection{Layer Potentials and Boundary Integral Operators}
\label{sec:BIOs}
%%%%%%%%%%%%%%%%%%%%%%%%%%%%%%%%%%%%
On this section, we will assume a given boundary $\Gamma$ satisfying the following assumption.
\begin{assumption}
\label{ass:single:operator}
Given $r \in [0,\infty]$, the interface $\Gamma$ is a Jordan curve of class $\mathcal{C}^{r,1}$. 
\end{assumption}
Moreover, we denote by $\Omega$ the part of $\cG$ below $\Gamma$ (see Figure \ref{fig:example}). For $\phi\in\cD_\theta(\Gamma)$
we define the single and double layer potentials as
\begin{align}\label{eq:SLDLintro}
\mathsf{SL}_{\theta,\Gamma}^{k}\phi(\bx):=\int_{\Gamma}G^k_\theta(\bx,\by)\phi(\by)\;\d\!\by,\quad\mathsf{DL}_{\theta,\Gamma}^{k}\phi(\bx):=\int_{\Gamma}\gamma_{n,\by}G^k_\theta(\bx,\by)\phi(\by)\;\d\!\by,
\end{align}
where $\gamma_{n,\by}$ denotes the interior (with respect to $\Omega$)
Neumann trace operator acting on functions with argument $\by$. 
\begin{lmm}[Theorems 4.7 and 4.10 in \cite{APJ:2019}]
\label{lemma:Contlayer}
Let $k$ and $\Gamma$ be as in Assumptions \ref{ass:woodgeneral} and \ref{ass:single:operator} with $r\geq 0$, respectively. Then,
the single and double layer potentials can be extended as continuous operators
acting on Sobolev spaces as follows,
\begin{align*}
\mathsf{SL}_{\theta,\Gamma}^k:H^{s-\half}_{\theta}{(\Gamma)}\to H^{s+1}_{\theta,\text{loc}}(\cG)\quad\text{and}\quad
\mathsf{DL}_{\theta,\Gamma}^k:H^{s+\half}_{\theta}{(\Gamma)}\to H^{s+1}_{\theta,\text{loc}}(\cG\setminus\Gamma), \quad \text{for $s<\half$.}
\end{align*}

\end{lmm}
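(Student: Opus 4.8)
The plan is to reduce the quasi-periodic mapping properties to the classical, non-periodic theory of layer potentials by isolating the local singularity of the quasi-periodic Green's function. Concretely, under Assumption \ref{ass:woodgeneral} I would split
\begin{align*}
G^k_\theta(\bx,\by)=G^k(\bx,\by)+S^k_\theta(\bx,\by),\quad S^k_\theta(\bx,\by):=\sum_{n\neq 0}e^{-\imath 2\pi n\theta}G^k(\bx+2\pi n\bm{e}_1,\by),
\end{align*}
where $G^k$ carries the full diagonal singularity and $S^k_\theta$ is the periodized remainder. The first task is to show that, for $\bx,\by$ ranging over a single period, the series defining $S^k_\theta$ converges to a $\theta$-quasi-periodic function that is smooth (indeed real-analytic) across the diagonal: away from $\bx=\by$ each summand stays uniformly away from the singular set $\{\bx-\by=2\pi n\bm{e}_1\}$, and the exponential decay of $H_0^{(1)}$ in its argument---together with convergence guaranteed \emph{precisely} by the exclusion of Rayleigh-Wood frequencies---yields a smooth periodic kernel.

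With this splitting in hand, the kernel of $\mathsf{SL}^k_{\theta,\Gamma}$, pulled back to $[0,2\pi]$ through the parametrization $\bz$, consists of the standard logarithmic principal part $-\tfrac{1}{2\pi}\log\|\bx-\by\|$, inherited from the diagonal behavior of $G^k$, plus a smooth $2\pi$-periodic correction collecting the regular part of $G^k$ and all of $S^k_\theta$. Because the densities are $\theta$-quasi-periodic, the endpoints $x_1=0,2\pi$ are glued smoothly and the operator behaves exactly like a boundary integral operator on a closed curve. I would therefore invoke the theory of periodic pseudo-differential operators \cite{saranen2013periodic} to read off the boundary mapping between the spaces $H^s_\theta[0,2\pi]$, transporting the result back through $\bz$ to $H^s_\theta(\Gamma)$. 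The contribution of $S^k_\theta$ alone, having a $C^\infty$ kernel, is smoothing: it maps every $H^{s-\half}_\theta(\Gamma)$ continuously into $C^\infty$ and hence into any $H^{s+1}_{\theta,\text{loc}}(\cG)$.

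To reach the claimed boundary-to-volume regularity I would use that $u:=\mathsf{SL}^k_{\theta,\Gamma}\phi$ is a $\theta$-quasi-periodic distributional solution of $(\Delta+k^2)u=0$ in $\cG\setminus\Gamma$, smooth off $\Gamma$ since there the kernel is smooth, with Cauchy data on $\Gamma$ controlled by $\phi$ through the jump relations; interior regularity for the Helmholtz operator, combined with the trace estimates of \cite{APJ:2019}, then propagates the boundary regularity into each subdomain and gives the membership in $H^{s+1}_{\theta,\text{loc}}(\cG)$. The double layer is handled identically: applying $\gamma_{n,\by}$ to the kernel raises the order by one, which shifts the domain space to $H^{s+\half}_\theta(\Gamma)$, while the target is taken in $\cG\setminus\Gamma$ because of the Dirichlet jump carried by the principal ($G^k$) part.

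I expect the main obstacle to be the first step: rigorously establishing convergence and uniform smoothness of the periodized remainder $S^k_\theta$ up to the diagonal. This is the only place where Assumption \ref{ass:woodgeneral} is essential---at a Rayleigh-Wood frequency the lattice sum diverges---so the argument must quantify the decay of the series carefully, e.g. through the spectral (Fourier-in-$x_1$) representation of $G^k_\theta$ and the non-degeneracy of the associated transverse wavenumbers, rather than relying on termwise smoothness alone. The restriction $s<\half$ should then emerge naturally from the double-layer domain $H^{s+\half}_\theta(\Gamma)$, which is only meaningful for $s+\half$ within the Sobolev range supported by a $\mathcal{C}^{r,1}$ curve with $r\geq 0$.
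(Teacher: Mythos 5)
First, note that the paper does not prove this lemma at all: it is imported verbatim as Theorems 4.7 and 4.10 of \cite{APJ:2019}, so the only internal point of comparison is the kernel-splitting machinery the paper deploys for the BIO differences (equation \eqref{eq:vsplit} and Theorem \ref{thrm:ppo}). Measured against that, your architecture (peel off the free-space singularity, show the periodized remainder is smooth, reduce to known theory) is reasonable for smooth curves, but it contains a genuine analytic gap at exactly the step you identify as the ``first task''. For real $k$ the Hankel function does \emph{not} decay exponentially: $H_0^{(1)}(kr)=O(r^{-1/2})$ as $r\to\infty$, so the terms of $S^k_\theta(\bx,\by)=\sum_{n\neq 0}e^{-\imath 2\pi n\theta}G^k(\bx+2\pi n\bm{e}_1,\by)$ decay only like $|n|^{-1/2}$ and the series is not absolutely convergent; this is precisely why \eqref{eq:qpgreenfunc} is defined as a symmetric limit and why convergence is a conditional, phase-cancellation phenomenon that fails at Rayleigh--Wood frequencies. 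Termwise smoothness plus claimed exponential decay therefore cannot establish that $S^k_\theta$ is a smooth kernel; one must instead work from the spectral (Rayleigh) representation $G^k_\theta(\bx,\by)=\frac{\imath}{4\pi}\sum_{j\in\IZ}\beta_j^{-1}e^{\imath\beta_j|x_2-y_2|+\imath j_\theta(x_1-y_1)}$ (quoted in Section 5.1 from \cite[Proposition 4.2]{APJ:2019}), where Assumption \ref{ass:woodgeneral} guarantees $\beta_j\neq 0$ and yields genuine exponential decay of the evanescent modes, with a separate matching argument near the diagonal. You gesture at this fix in your closing paragraph, but the proof as written rests on the false decay estimate, so its first and crucial step is unproven. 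A smaller but real defect of the same splitting: in the periodic pullback the $n=\pm 1$ terms of $S^k_\theta$ are singular at $|s-t|=2\pi$, which on the quotient circle is the \emph{same} diagonal point, so $G^k$ alone does not ``carry the full diagonal singularity''; this is why the paper's own decomposition \eqref{eq:vsplit} periodizes the singular factor through $J^k_\theta$ with the cutoff $\chi_\epsilon$ rather than isolating a single free-space copy.

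The second gap concerns the claimed generality. The lemma is stated under Assumption \ref{ass:single:operator} with $r\geq 0$, i.e., for merely Lipschitz $\Gamma$, and for all $s<\half$. Your reduction to the periodic pseudo-differential calculus of \cite{saranen2013periodic} requires the pulled-back kernels $J^k_\theta$, $R^k_\theta$ to be smooth (or at least $\mathcal{C}^{r,1}$ with $r$ large), hence it can only deliver the result for smooth interfaces --- compare Remark \ref{rem:limitedreg}, where even $r\in[1,\infty)$ restricts the admissible range of $s$. The proof in \cite{APJ:2019} that the lemma actually cites is, of necessity, variational in the Costabel--McLean style: the potentials are characterized through a quasi-periodic transmission/Newton-potential problem, the $H^1_{\theta,\mathrm{loc}}$ mapping property is read off the weak formulation, and the full range $|s|<\half$ follows by duality and interpolation. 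Relatedly, your passage from boundary to volume via interior elliptic regularity is insufficient for the single layer: the target $H^{s+1}_{\theta,\text{loc}}(\cG)$ is a statement of Sobolev regularity \emph{across} $\Gamma$, and interior regularity plus trace control on each side does not glue two one-sided solutions into a function of class $H^{s+1}$ through the interface --- that is exactly what the variational characterization (continuity of the Dirichlet trace together with the weak formulation across $\Gamma$) provides and what your outline omits.
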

We then define BIOs by taking traces of the layer potentials as follows
\begin{equation}\label{eq:BIOintro}
\begin{aligned}
\V^{k}_{\theta,\Gamma}&:=\tD \mathsf{SL}_{\theta,\Gamma}&&{\Kk^{\prime k}_{\theta,\Gamma}}:=\tN \mathsf{SL}_{\theta,\Gamma}+\half\mathsf{I},\\
\W^{k}_{\theta,\Gamma}&:=-\tN \mathsf{DL}_{\theta,\Gamma}&&{\Kk^{k}_{\theta,\Gamma}}:=\tD \mathsf{DL}_{\theta,\Gamma}-\half\mathsf{I}.
\end{aligned}
\end{equation}
Moreover, due to the jump properties of the layer potentials
\cite[Lemma 4.11]{APJ:2019}, the following relations hold:
\begin{equation}
\label{eq:BIOintroExterior}
\begin{aligned}
 \V^{k}_{\theta,\Gamma}&=\tD^{e} \mathsf{SL}_{\theta,\Gamma}, && {\Kk^{\prime k}_{\theta,\Gamma}}=\tN^{e} \mathsf{SL}_{\theta,\Gamma}-\half\mathsf{I},\\
\W^{k}_{\theta,\Gamma}&=-\tN^{e} \mathsf{DL}_{\theta,\Gamma},&&{\Kk^{k}_{\theta,\Gamma}}=\tD^{e} \mathsf{DL}_{\theta,\Gamma}+\half\mathsf{I}.
\end{aligned}
\end{equation}
\begin{rmrk}
When considering interior and exterior traces acting on layer potentials, note
that the normal vector on $\Gamma$ is to be \emph{fixed} so that the only difference
between exterior and interior traces is the direction from which we approach
$\Gamma$. Additionally, note that, having fixed the normal vector to $\Gamma$, the choice
of trace taken in the definition of
$\W^{k}_{\theta,\Gamma}$ is arbitrary
and makes no difference.
\end{rmrk}
\begin{lmm}[Theorem 4.10 in \cite{APJ:2019}]
\label{lemma:ContBIO}
Let $k$ and $\Gamma$ be as in Assumptions \ref{ass:woodgeneral} and \ref{ass:single:operator} with $r\geq 0$, respectively, and let $s<\half$. Then, the BIOs satisfy the following continuity conditions
\begin{align*}
\V^{k}_{\theta,\Gamma}:H^{s-\half}_\theta(\Gamma)\rightarrow H^{s+\half}_{\theta}(\Gamma),\quad
\W^{k}_{\theta,\Gamma}:H^{s+\half}_\theta(\Gamma)\rightarrow H^{s-\half}_{\theta}(\Gamma),\\
{\Kk^{\prime k}_{\theta,\Gamma}}:H^{s-\half}_\theta(\Gamma)\rightarrow H^{s-\half}_{\theta}(\Gamma),\quad
{\Kk^{k}_{\theta,\Gamma}}:H^{s+\half}_\theta(\Gamma)\rightarrow H^{s+\half}_{\theta}(\Gamma).
\end{align*}
\end{lmm}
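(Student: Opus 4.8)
The plan is to derive all four continuity statements from the mapping properties of the layer potentials recorded in Lemma~\ref{lemma:Contlayer}, composed with the trace operators introduced in Section~\ref{sec:notation}. Indeed, each operator in \eqref{eq:BIOintro} is by definition an interior trace of $\mathsf{SL}_{\theta,\Gamma}^{k}$ or $\mathsf{DL}_{\theta,\Gamma}^{k}$, up to a bounded multiple of the identity, so the backbone of the argument is to follow the Sobolev index through the two-step composition \emph{potential}, then \emph{trace}, and to verify that the resulting indices match those claimed.

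For the Dirichlet-based operators $\V^{k}_{\theta,\Gamma}=\tD\mathsf{SL}_{\theta,\Gamma}^{k}$ and ${\Kk^{k}_{\theta,\Gamma}}=\tD\mathsf{DL}_{\theta,\Gamma}^{k}-\half\mathsf{I}$, Lemma~\ref{lemma:Contlayer} sends $H^{s-\half}_\theta(\Gamma)$ into $H^{s+1}_{\theta,\mathrm{loc}}(\cG)$ and $H^{s+\half}_\theta(\Gamma)$ into $H^{s+1}_{\theta,\mathrm{loc}}(\cG\setminus\Gamma)$; restricting to $\Omega$ and applying $\tD:H^{s+1}_\theta(\Omega)\to H^{s+\half}_\theta(\Gamma)$ gives exactly $\V^{k}_{\theta,\Gamma}:H^{s-\half}_\theta(\Gamma)\to H^{s+\half}_\theta(\Gamma)$ and ${\Kk^{k}_{\theta,\Gamma}}:H^{s+\half}_\theta(\Gamma)\to H^{s+\half}_\theta(\Gamma)$, the identity term being trivially bounded. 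This composition is valid wherever the Dirichlet trace theorem applies directly, i.e. for $s+1>\half$.

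For the Neumann-based operators $\W^{k}_{\theta,\Gamma}=-\tN\mathsf{DL}_{\theta,\Gamma}^{k}$ and ${\Kk^{\prime k}_{\theta,\Gamma}}=\tN\mathsf{SL}_{\theta,\Gamma}^{k}+\half\mathsf{I}$ the naive composition fails, because the standard Neumann trace $\tN:H^{t}_\theta(\Omega)\to H^{t-\frac32}_\theta(\Gamma)$ requires $t>\frac32$, i.e. $s>\half$, which lies outside our range. The device that unlocks these is that, away from $\Gamma$, both potentials solve the homogeneous Helmholtz equation, whence $\Delta(\cdot)=-k^{2}(\cdot)$ is square integrable and the potentials lie in $H^{1}_{\theta,\Delta}(\Omega)$; the extended Neumann trace $\tN:H^{1}_{\theta,\Delta}(\Omega)\to H^{-\half}_\theta(\Gamma)$ then applies and delivers the endpoint $s=0$.

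The main obstacle is to reach the full range $s<\half$, and especially $s\le-\half$, which the elementary composition above does not attain: for such $s$ the potentials land in $H^{s+1}_{\theta,\mathrm{loc}}$ with $s+1\le\half$, below the threshold of the generic trace theorems. I would close the gap in two complementary steps. First, duality: the pairing \eqref{eq:dualprod} identifies $H^{-s}_\theta(\Gamma)$ with the dual of $H^{s}_\theta(\Gamma)$, and the transpose/adjoint relations between the operators---$\V$ and $\W$ symmetric, $\Kk$ and $\Kk'$ mutually adjoint, all consequences of the symmetry of $G^{k}_\theta$ together with the jump identities \eqref{eq:BIOintroExterior}---let me trade a bound on one Sobolev scale for a bound on the reflected one, and in particular deduce the $\Kk'$ statement once $\Kk$ is known. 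Second, and decisively for the remaining values of $s$, I would invoke the periodic pseudodifferential calculus of \cite{saranen2013periodic}: expanding $G^{k}_\theta$ in its Fourier series exhibits $\V$, $\W$, $\Kk$, $\Kk'$ as periodic pseudodifferential operators of orders $-1$, $+1$, $0$, $0$, whose continuity between the stated spaces holds for every $s\in\IR$; on a smooth curved $\Gamma$ this reduces to the explicitly diagonal flat-interface symbols modulo a smoothing perturbation. I expect the genuinely delicate points to be the Neumann trace of the double layer defining $\W$ and the passage to large negative $s$, both of which rely on the Helmholtz structure and the symbol rather than on generic trace estimates.
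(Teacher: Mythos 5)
The paper never proves this lemma: it is imported verbatim as Theorem~4.10 of \cite{APJ:2019}, so there is no in-paper argument to match, and your skeleton (potential mapping properties of Lemma~\ref{lemma:Contlayer} composed with traces, plus the extended Neumann trace on $H^1_{\theta,\Delta}$ for the endpoint $s=0$) is indeed the natural reconstruction. However, two steps of your proposal have genuine gaps. First, the duality step rests on a symmetry that is false in the quasi-periodic setting: from \eqref{eq:qpgreenfunc} one computes $G^k_\theta(\bx,\by)=G^k_{-\theta}(\by,\bx)$, so the kernel is \emph{not} symmetric for fixed $\theta$, and since the pairing \eqref{eq:dualprod} is anti-linear the Hermitian adjoint additionally conjugates the kernel (turning $H_0^{(1)}$ into $H_0^{(2)}$). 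Thus the adjoint of $\V^k_\theta$ is a weakly singular operator on the $(1-\theta)$-quasi-periodic scale with conjugated kernel, and $\Kk^k_\theta$, $\Kk'^k_\theta$ are mutually adjoint only up to this $\theta$-reflection; this is repairable (the Wood condition $|j+\theta|=k$ is invariant under $\theta\mapsto 1-\theta$, and conjugation leaves the singular structure untouched), but it must be argued, not asserted. Worse, duality cannot extend the range at all here: to obtain the lemma at some $s<-\half$ you would need the adjoint bound at index $-s>\half$, which is exactly the regime your trace argument cannot reach, so the reflection trick is circular and the whole burden for $s\le-\half$ falls on your pseudodifferential step.

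Second, that pseudodifferential step requires smoothness of $\Gamma$: the splitting \eqref{eq:vsplit} with $J^k_\theta$, $R^k_\theta$ of class $\mathcal{C}^\infty$ presupposes a $\mathcal{C}^\infty$ parametrization, and the paper's own Remark~\ref{rem:limitedreg} concedes that for $\Gamma\in\mathcal{C}^{r,1}$ with finite $r$ the calculus of \cite{saranen2013periodic} only yields an $s$-range limited by $r$. The lemma, however, is stated for $r\geq 0$, i.e.\ for merely Lipschitz interfaces, where your argument delivers only $-\half<s<\half$ for $\V^k_{\theta,\Gamma}$ and $\Kk^k_{\theta,\Gamma}$ and the single value $s=0$ for the Neumann-based operators. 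Note also that for $0<s<\half$ your extended-trace device fails on its own terms: it places $\tN\mathsf{DL}^k_{\theta,\Gamma}\lambda$ in $H^{-\half}_\theta(\Gamma)$, which strictly \emph{contains} the claimed target $H^{s-\half}_\theta(\Gamma)$, so no improved regularity is obtained. A partial repair inside the paper's own toolkit is the Maue-type identity of Lemma~\ref{lemma:intpartsW}, which is proved for $r=0$ and transfers the continuity $\V^k_{\theta,\Gamma}:H^{s-\half}_\theta(\Gamma)\to H^{s+\half}_\theta(\Gamma)$ to $\W^k_{\theta,\Gamma}:H^{s+\half}_\theta(\Gamma)\to H^{s-\half}_\theta(\Gamma)$ on the same $s$-window, with $\Kk'$ then handled by the corrected duality; but the full stated range $s<\half$ under the hypothesis $r\geq 0$ does not follow from anything you have assembled, which is presumably why the paper outsources the statement to \cite{APJ:2019} rather than reproving it.
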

\subsubsection{Compacteness Properties}
\label{ssec:compacteness}
Until this point, we have established continuity properties of the four BIOs defined in \eqref{eq:BIOintro}.
However, the BIEs we consider in the coming section
require the subtraction of two instances of
the same BIO with different wavenumbers. This will require a
number of results from pseudo-differential operator theory \cite{saranen2013periodic} as well as a 
version of the Rellich theorem on quasi-periodic Sobolev spaces on boundaries.
After our analysis, we will see that the difference between any two of the operators in
\eqref{eq:BIOintro}---with different wavenumbers---will result in a compact operator. 
\begin{thrm}[{Rellich Theorem for quasi-periodic Sobolev spaces}]
\label{trm:Rellich}
Let $s_1$, $s_2$ be real numbers such that $s_1 < s_2$ and $\theta \in [0,1)$. Then, $H^{s_2}_\theta(\Gamma)$ is compactly embedded in $H^{s_1}_\theta(\Gamma)$.
\end{thrm}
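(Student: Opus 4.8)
The plan is to reduce the statement to a compactness result for the weighted sequence-space description of the quasi-periodic Sobolev norms on the interval $[0,2\pi]$ recalled above, and then to transport this back to $\Gamma$ through the pullback by the parametrization $\bz$.

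First I would establish the core estimate on the interval: for $s_1<s_2$ the identity embedding $\iota:H^{s_2}_\theta[0,2\pi]\to H^{s_1}_\theta[0,2\pi]$ is compact. Recall that for every $u$ and every $s\in\IR$ one has $\norm{u}{H^s_\theta[0,2\pi]}^2=\sum_{j\in\IZ}(1+j_\theta^2)^s\modulo{u_{j,\theta}}^2$, so both spaces are isometrically identified with weighted sequence spaces in the quasi-periodic Fourier coefficients $\{u_{j,\theta}\}_{j\in\IZ}$. For $N\in\IN$ I introduce the finite-rank truncation $P_N u:=\sum_{\modulo{j}\le N}u_{j,\theta}\,e^{\imath j_\theta t}$, regarded as a map into $H^{s_1}_\theta[0,2\pi]$. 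Then for any $u\in H^{s_2}_\theta[0,2\pi]$,
\begin{align*}
\norm{(\iota-P_N)u}{H^{s_1}_\theta[0,2\pi]}^2
&=\sum_{\modulo{j}>N}(1+j_\theta^2)^{s_1-s_2}(1+j_\theta^2)^{s_2}\modulo{u_{j,\theta}}^2\\
&\le C_N\,\norm{u}{H^{s_2}_\theta[0,2\pi]}^2,\qquad C_N:=\sup_{\modulo{j}>N}(1+j_\theta^2)^{s_1-s_2}.
\end{align*}
Since $s_1-s_2<0$ and $1+j_\theta^2\to\infty$ as $\modulo{j}\to\infty$, the quantity $C_N$ tends to $0$. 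Hence $\iota$ is the operator-norm limit of the finite-rank operators $P_N$ and is therefore compact.

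Second I would transport this to the curve. By construction the norms on $H^s_\theta(\Gamma)$ are pullbacks by $\bz$ of those on $H^s_\theta[0,2\pi]$: for $s\ge0$ the map $u\mapsto u\circ\bz$ is an isometric isomorphism, while for $s<0$ the corresponding isomorphism is $u\mapsto(u\circ\bz)\norm{\dot{\bz}}{\IR^2}$. Because the curve defining these spaces is of class $\mathcal{C}^\infty$, the weight $\norm{\dot{\bz}}{\IR^2}$ is smooth, $2\pi$-periodic and bounded away from $0$, so multiplication by it and by its reciprocal are bounded on $H^s_\theta[0,2\pi]$ for every $s$; thus all these pullback maps are topological isomorphisms onto $H^s_\theta[0,2\pi]$. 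Conjugating the embedding $H^{s_2}_\theta(\Gamma)\hookrightarrow H^{s_1}_\theta(\Gamma)$ by these isomorphisms produces, in every sign combination of $s_1$ and $s_2$, the interval embedding $\iota$ possibly pre- or post-composed with multiplication by $\norm{\dot{\bz}}{\IR^2}^{\pm1}$. Since the composition of a compact operator with bounded operators is compact, and topological isomorphisms preserve compactness of embeddings, the embedding on $\Gamma$ is compact.

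The genuinely analytical content lies entirely in the interval estimate of the second paragraph, which is elementary once the diagonal weighted description of the norms is available. The step requiring the most care is the negative-order transfer: one must verify that the extra factor $\norm{\dot{\bz}}{\IR^2}$ entering the definition of $H^{-s}_\theta(\Gamma)$ does not obstruct the reduction, which is precisely where the smoothness of $\Gamma$ is used, ensuring that multiplication by $\norm{\dot{\bz}}{\IR^2}^{\pm1}$ is an isomorphism of each quasi-periodic Sobolev space. I expect this bookkeeping over the sign cases of $s_1$ and $s_2$, rather than any single hard inequality, to be the main obstacle.
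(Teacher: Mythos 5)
Your proposal is correct and takes essentially the same approach as the paper: the paper's proof is a one-line reduction to the classical compact embedding of periodic Sobolev spaces (citing Kress, Theorem 8.3) via the definition of the quasi-periodic spaces, and you simply make that reduction explicit, reproving the interval-level result by the standard finite-rank truncation argument in the weighted sequence norms and spelling out the pullback bookkeeping---including the $\norm{\dot{\bz}}{\IR^2}$ weight at negative orders---that the paper leaves to the reader.
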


\begin{proof}
Follows directly from the definition of the quasi-periodic spaces
and the result for standard Sobolev spaces (see \cite[Theorem 8.3]{kress:2014}).
\end{proof}

\begin{rmrk}
No smoothness assumptions are needed for the proof of the previous theorem. Thus, it can be extended to Lipchitz boundaries for any pair of real numbers $s_1$, $s_2<1$, and potentially less regular cases if we restrict $s_1$, $s_2$ to be non-negative.  
\end{rmrk}

\begin{thrm}[Theorem 6.1.1 in \cite{saranen2013periodic}]
\label{thrm:ppo}
Let  $a : \IR \times \IR \rightarrow \mathbb{C}$ be
a bi-periodic function of class $\C^\infty$ and $S$
be a $2 \pi-$periodic distribution in $\IR$. Consider the following formal operator acting
on a periodic smooth function $u\in\C^{\infty}(\IR)$:
\begin{align}\label{eq:A:formal:definition}
Au(s) = \int_0^{2\pi} S(s-t)a(s,t)u(t) dt\quad \forall\;s \in \IR,
\end{align} 
where integration is to be understood as a duality pairing. Furthermore, let us assume the Fourier coefficients of $S$ to behave as 
$$|S_n| \lesssim |n|^p,$$ for some $p \in\IR$.
Then, for any $s\in\IR$, $A$ in \eqref{eq:A:formal:definition} may be continuously extended as an operator
mapping from $H^s[0, 2\pi]$ to $H^{s-p}[0, 2\pi]$, i.e.,
$$A : H^s[0, 2\pi] \rightarrow H^{s-p}[0, 2\pi].$$
\end{thrm}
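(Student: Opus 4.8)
The plan is to split $A$ into a convolution with $S$, which accounts for the order shift $p$, and a multiplication by the smooth symbol $a$, which is order-preserving. Throughout I write $\widehat{u}_n$ for the Fourier coefficients of a periodic $u$, so that $\norm{u}{H^s[0,2\pi]}^2=\sum_{n\in\IZ}(1+n^2)^s|\widehat{u}_n|^2$, and I work first on the dense subspace of trigonometric polynomials, extending by density at the end. I would first treat the pure convolution case $a\equiv 1$, where $Au=S*u$ and $\widehat{(S*u)}_n \cong S_n\widehat{u}_n$. Using the hypothesis $|S_n|\lesssim|n|^p$ together with $|n|^{2p}\lesssim(1+n^2)^p$, one gets
\begin{align*}
\norm{S*u}{H^{s-p}[0,2\pi]}^2=\sum_{n\in\IZ}(1+n^2)^{s-p}|S_n|^2|\widehat{u}_n|^2\lesssim\sum_{n\in\IZ}(1+n^2)^{s}|\widehat{u}_n|^2=\norm{u}{H^s[0,2\pi]}^2,
\end{align*}
so convolution by $S$ is bounded from $H^s[0,2\pi]$ to $H^{s-p}[0,2\pi]$.

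Next I would exploit the smoothness of the symbol. Since $a$ is bi-periodic and of class $\C^\infty$, its double Fourier coefficients $a_{m,l}$ decay faster than any polynomial, i.e. for each $N$ one has $|a_{m,l}|\lesssim_N(1+|m|)^{-N}(1+|l|)^{-N}$. Writing $a(s,t)=\sum_{m,l\in\IZ}a_{m,l}e^{\imath m s}e^{\imath l t}$ and inserting this into the defining formula \eqref{eq:A:formal:definition} yields
\begin{align*}
Au(s)=\sum_{m,l\in\IZ}a_{m,l}\,e^{\imath m s}\,\big(S*(e^{\imath l\,\cdot}u)\big)(s),
\end{align*}
exhibiting $A$ as a superposition of modulation--convolution--modulation operators, each of which is already controlled in order by the first step.

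The main work, and the expected obstacle, is to control the operator norm of each term uniformly in $m$ and $l$ and then sum. Multiplication by $e^{\imath l t}$ shifts Fourier coefficients by $l$, and a Peetre-type inequality $(1+(n+l)^2)^s\lesssim(1+l^2)^{|s|}(1+n^2)^s$ gives $\norm{e^{\imath l\,\cdot}u}{H^s[0,2\pi]}\lesssim(1+l^2)^{|s|/2}\norm{u}{H^s[0,2\pi]}$; the same argument for multiplication by $e^{\imath m s}$ in $H^{s-p}[0,2\pi]$ costs a factor $(1+m^2)^{|s-p|/2}$. Combining with Step~1 gives
\begin{align*}
\norm{e^{\imath m\,\cdot}\,S*(e^{\imath l\,\cdot}u)}{H^{s-p}[0,2\pi]}\lesssim(1+m^2)^{|s-p|/2}(1+l^2)^{|s|/2}\norm{u}{H^s[0,2\pi]}.
\end{align*}
The delicate point is that the polynomial growth in $m,l$ of these shift norms must be dominated by the super-polynomial decay of $a_{m,l}$, uniformly over the arbitrary real orders $s$ and $s-p$; choosing $N$ larger than $|s-p|+2$ and $|s|+2$ makes the double series $\sum_{m,l}|a_{m,l}|(1+m^2)^{|s-p|/2}(1+l^2)^{|s|/2}$ converge, whence $\norm{Au}{H^{s-p}[0,2\pi]}\lesssim\norm{u}{H^s[0,2\pi]}$. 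Boundedness on trigonometric polynomials then extends continuously to all of $H^s[0,2\pi]$ by density, completing the argument.
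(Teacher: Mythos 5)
The paper does not contain a proof of this statement: it is imported verbatim as Theorem 6.1.1 of the cited book of Saranen and Vainikko, so there is no internal proof to compare against. Your argument is correct and is essentially the standard proof of that theorem: reduce to the pure multiplier case $a\equiv 1$, where $\widehat{(S\ast u)}_n\cong S_n\widehat{u}_n$ gives the order-$p$ shift, expand the smooth symbol $a$ in its rapidly convergent double Fourier series to write $A$ as a sum of modulation--convolution--modulation operators, control the modulation cost by Peetre's inequality, and dominate the resulting polynomial growth in $(m,l)$ by the super-polynomial decay of $a_{m,l}$, finishing by density. The only point worth tightening is the hypothesis $|S_n|\lesssim|n|^p$ at $n=0$: for $p>0$ it would literally force $S_0=0$, so it should be read as $|S_n|\lesssim\big(\max(1,|n|)\big)^p$ (as in the cited book); with that convention your inequality $|n|^{2p}\lesssim(1+n^2)^p$ holds for $n\neq 0$ and either sign of $p$, and the $n=0$ term is handled trivially since $S_0$ is a fixed finite number.
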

We also recall a classical result from Fourier analysis (\emph{c.f.}~\cite{taibleson1967fourier}).
\begin{lmm}
\label{lemma:FourierCofs}
Let $m \in \IN$, $f:\IR \rightarrow \mathbb{C}$ be a periodic $\mathcal{C}^m$-class function
  such that its distributional derivative
of order $m+1$ belongs to $L^1((0,2\pi))$. Then,
its Fourier coefficients $\{f_n\}_{n\in\IZ}$ are such that $$ |f_n| \lesssim {|n|}^{-m-1}.$$ 
\end{lmm}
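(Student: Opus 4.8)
The plan is to prove the classical statement that smoothness of a periodic function controls the decay of its Fourier coefficients, via repeated integration by parts. Let $f$ be periodic and of class $\mathcal{C}^m$ with its distributional derivative of order $m+1$ lying in $L^1((0,2\pi))$, and recall that its $n$-th Fourier coefficient is $f_n = \frac{1}{2\pi}\int_0^{2\pi} f(t)e^{-\imath n t}\,\d t$. The guiding identity is that differentiation in the spatial variable corresponds to multiplication by $\imath n$ on the Fourier side: for a sufficiently regular periodic $g$ one has $(g')_n = \imath n\, g_n$. Applying this relation $m+1$ times formally yields $(f^{(m+1)})_n = (\imath n)^{m+1} f_n$, so that $|f_n| = |n|^{-(m+1)}\,|(f^{(m+1)})_n|$, and the conclusion then follows because the Fourier coefficients of an $L^1$ function are uniformly bounded by its $L^1$ norm (this is the trivial direction of the Riemann–Lebesgue estimate, $|(f^{(m+1)})_n| \leq \frac{1}{2\pi}\norm{f^{(m+1)}}{L^1((0,2\pi))}$).

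First I would make the integration-by-parts step rigorous. For each of the first $m$ derivatives, $f, f', \dots, f^{(m-1)}$ are genuine continuous periodic functions (since $f\in\mathcal{C}^m$), so the boundary terms arising from integrating $\int_0^{2\pi} f^{(k)}(t) e^{-\imath n t}\,\d t$ by parts vanish by periodicity, and one cleanly obtains $(f^{(k)})_n = \imath n\,(f^{(k-1)})_n$ for $k=1,\dots,m$. The only delicate transition is the final one, from the $m$-th to the $(m+1)$-th derivative, where $f^{(m+1)}$ exists only as a distributional derivative in $L^1$; here I would interpret the integration by parts as the definition of the distributional derivative paired against the smooth test function $e^{-\imath n t}$, whose periodicity again kills the boundary contribution. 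Chaining these $m+1$ identities gives $(\imath n)^{m+1} f_n = (f^{(m+1)})_n$ for every $n\neq 0$.

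The main obstacle, such as it is, is purely a matter of bookkeeping regarding the regularity hypotheses: one must verify that at each step the object being differentiated is regular enough for the boundary terms to vanish and for the pairing to make sense, and in particular that the last step correctly uses the distributional—rather than classical—derivative. Once the identity $|f_n| = |n|^{-(m+1)}|(f^{(m+1)})_n|$ is in hand, I would bound the right-hand factor uniformly in $n$ by $\frac{1}{2\pi}\norm{f^{(m+1)}}{L^1((0,2\pi))}$, which is finite by assumption and independent of $n$. This yields $|f_n|\lesssim |n|^{-m-1}$ for all $n\neq 0$, with the implied constant depending only on $\norm{f^{(m+1)}}{L^1((0,2\pi))}$, which is exactly the claimed estimate. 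Since this is a standard result, I expect the proof to be short and to cite \cite{taibleson1967fourier} for the general theory while supplying the integration-by-parts argument for completeness.
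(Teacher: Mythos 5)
Your argument is correct, but there is nothing in the paper to compare it against: the paper states this lemma without proof, recalling it as a classical fact from Fourier analysis with a citation to \cite{taibleson1967fourier}. Your integration-by-parts derivation is the standard proof of exactly this statement, and your handling of the one delicate point is right: for $k=1,\dots,m$ the identities $(f^{(k)})_n=\imath n\,(f^{(k-1)})_n$ are classical, since $f,\dots,f^{(m-1)}$ are $\mathcal{C}^1$ and periodic so the boundary terms vanish; for the final step, the cleanest justification is to note that $f^{(m)}$ is continuous and periodic with an $L^1$ distributional derivative, hence absolutely continuous on the circle, so one last genuine integration by parts (equivalently, your pairing of the periodic distribution $f^{(m+1)}$ against the smooth test function $e^{-\imath n t}$) gives $(\imath n)^{m+1}f_n=(f^{(m+1)})_n$ for all $n\neq 0$, and the trivial bound $|(f^{(m+1)})_n|\leq \frac{1}{2\pi}\|f^{(m+1)}\|_{L^1((0,2\pi))}$ concludes with an explicit constant. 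For context, the cited result of Taibleson is marginally stronger: it assumes only that $f^{(m)}$ is of bounded variation (with the total variation replacing $\|f^{(m+1)}\|_{L^1}$ in the constant, via Lebesgue--Stieltjes integration by parts), and the lemma's hypothesis is the special case where $f^{(m)}$ is absolutely continuous. So your proof establishes precisely the statement asserted, by the expected direct route, while the citation covers a slightly more general one; either is adequate for the role the lemma plays in the paper, namely feeding kernel-smoothness into the Fourier-coefficient decay required by Theorem \ref{thrm:ppo}.
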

In order to employ Theorem \ref{thrm:ppo}
we will need to express the quasi-periodic BIOs in
a convenient way: with periodic functions as kernels. Let $k$ and $\Gamma$ be as in Assumptions \ref{ass:woodgeneral} and \ref{ass:single:operator}, respectively. We begin by
considering a periodic version of the fundamental solution
in \eqref{eq:qpgreenfunc}
and its derivatives on $\Gamma$ as
\begin{align}
\label{eq:vkernel}
\widehat{G}^k_\theta (s,t) := 
e^{-\imath  \theta (s-t)}G^k_\theta (\bz(s),\bz(t)),
\end{align}
which may be expressed as 
\begin{align}
\label{eq:vsplit}
\widehat{G}^k_\theta (s,t) = 
S(t-s) J_\theta^k(s,t) + R_\theta^k(s,t),
\end{align}
with
\begin{gather}
S(t):= -\frac{1}{2\pi} \log\left| 2 \sin \frac{|t|}{2} \right|,\label{eq:Sdef}\\
J_\theta^k(s,t) := e^{-\imath \theta(s-t)} \sum_{j = -\infty} ^\infty J_0(k \| \bz(s)+2\pi j \bm{e_1}- \bz(t) \|)e^{-\imath 2\pi j \theta }\chi_{\epsilon}(s-t),\nonumber
\end{gather}
where $J_0(\cdot)$ is the zeroth-first kind Bessel function, $\epsilon\in (0,2\pi)$ and $\chi_\epsilon(\cdot)$
is a smooth function satisfying
\begin{gather*}
\chi_\epsilon(s)=0\quad\text{if}\quad\modulo{s}>\epsilon\quad\text{and}\quad \chi_\epsilon(s)=1\quad\text{if}\quad\modulo{s}<\half\epsilon,
\end{gather*}
and
\begin{gather*}
R^k_\theta(s,t) = \widehat{G}^k_\theta (s,t) - S(t-s)J^k_\theta(s,t).
\end{gather*}
Using known expansions of the Hankel functions (see \cite[9.1.12-9.1.13]{abramowitz1965handbook}) one can check that $R^k_\theta$ belongs to $\mathcal{C}^\infty(\IR \times \IR)$.

Before we proceed any further, it is necessary to introduce a second wavenumber. We will denote $\widetilde{k}>0$ a wavenumber (not necessarily different from $k$) that also satisfies Assumption \ref{ass:woodgeneral}. 
%\jp{
%\begin{assumption}
%\label{ass:double:op}
%Given $\theta \in [0,1)$ the \ra{quasi-}periodic shift, $r \in [0,\infty]$, then the wavenumbers $k, \widetilde{k}$, and the interface $\Gamma$ are such that: 
%\begin{enumerate}
%\item 
%$k$ and $\widetilde{k}$ are positive and they are not Rayleigh-Wood frequencies for $\theta$. 
%\item 
%$\Gamma$ is a Jordan curve of class $\mathcal{C}^{r,1}$.
%\end{enumerate}
%\end{assumption}
%}
\begin{prpstn}
\label{prop:vscompact}
Let $k$ and $\widetilde{k}$ satisfy Assumption \ref{ass:woodgeneral}, and let $\Gamma$ satisfy Assumption \ref{ass:single:operator} with $r= \infty$. Consider $\mathsf{V}_\theta^k$ and $\mathsf{V}_\theta^{\widetilde k}$ the weakly
singular BIOs on $\Gamma$ defined in \eqref{eq:BIOintro} and where we have dropped the $\Gamma$ subscript for brevity.
Both operators may be considered as pseudo-differential operators
of order $-1$, whence
$$\mathsf{V}_\theta^k:H^s_\theta(\Gamma)\to H^{s+1}_\theta(\Gamma), \quad \mathsf{V}_\theta^{\widetilde{k}}:H^s_\theta(\Gamma)\to H^{s+1}_\theta(\Gamma).$$
Moreover, the operator $\mathsf{V}_\theta^{k,\widetilde{k}}:= \mathsf{V}_\theta^k - \mathsf{V}_\theta^{\widetilde{k}}$
can be extended to
$$\mathsf{V}_\theta^{k,\widetilde{k}}:H^{s}_\theta(\Gamma)\to H^{s+3}_\theta(\Gamma), $$
as a bounded linear operator for every $s \in \IR$.
\end{prpstn}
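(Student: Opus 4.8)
The plan is to reduce the statement, via the kernel splitting \eqref{eq:vsplit} and a quasi-periodic–to–periodic intertwining, to the periodic pseudo-differential calculus of Theorem \ref{thrm:ppo}, whose hypotheses I would verify using Lemma \ref{lemma:FourierCofs} and the explicit Fourier coefficients of the logarithmic kernel $S$ in \eqref{eq:Sdef}. Throughout I would work on the parameter interval $[0,2\pi]$: pulling $\mathsf{V}_\theta^k$ back through $\bz$ and conjugating by the multiplication $u\mapsto e^{-\imath\theta t}u$, which is an isometric isomorphism between $H^s_\theta[0,2\pi]$ and $H^s[0,2\pi]$, turns $\mathsf{V}_\theta^k$ into an integral operator on periodic Sobolev spaces whose kernel is exactly the bi-periodic function $\widehat{G}^k_\theta(s,t)$ of \eqref{eq:vkernel} times the smooth Jacobian $\norm{\dot{\bz}}{\IR^2}$ (a harmless $\C^\infty$ bi-periodic multiplier).

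For the order $-1$ claim I would insert the splitting $\widehat{G}^k_\theta=S(t-s)J_\theta^k(s,t)+R_\theta^k(s,t)$. The remainder $R_\theta^k\in\C^\infty(\IR\times\IR)$ defines an operator mapping every $H^s[0,2\pi]$ into $\C^\infty\subset H^{s+1}[0,2\pi]$. For the principal part $S(t-s)J_\theta^k(s,t)$, the factor $J_\theta^k$ is bi-periodic and $\C^\infty$, while the explicit Fourier series of $S$ gives $S_n=\tfrac{1}{4\pi|n|}$ for $n\neq0$, so $|S_n|\lesssim|n|^{-1}$. Theorem \ref{thrm:ppo} with $p=-1$ then yields continuity $H^s[0,2\pi]\to H^{s+1}[0,2\pi]$; undoing the conjugation and pull-back gives $\mathsf{V}_\theta^k:H^s_\theta(\Gamma)\to H^{s+1}_\theta(\Gamma)$ for every $s\in\IR$, and identically for $\widetilde k$.

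The core is the order $-3$ claim for the difference. Writing $\widehat{G}^k_\theta-\widehat{G}^{\widetilde k}_\theta=S(t-s)\bigl(J_\theta^k-J_\theta^{\widetilde k}\bigr)(s,t)+\bigl(R_\theta^k-R_\theta^{\widetilde k}\bigr)(s,t)$, the second summand is $\C^\infty$ and again contributes a smoothing operator. For the first, the decisive observation is that the logarithmic coefficient $J_\theta^k-J_\theta^{\widetilde k}$ vanishes to second order on the diagonal: the $k$-dependence near $s=t$ enters only through $J_0(k\norm{\bz(s)-\bz(t)}{\IR^2})$, and since $J_0$ is entire and even with $J_0(0)=1$, one has $J_0(kr)-J_0(\widetilde kr)=-\tfrac{k^2-\widetilde k^2}{4}r^2+O(r^4)$ with $r=\norm{\bz(s)-\bz(t)}{\IR^2}=|s-t|\,\norm{\dot{\bz}(s)}{\IR^2}+O(|s-t|^2)$. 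I would use this to factor
\begin{align*}
\bigl(J_\theta^k-J_\theta^{\widetilde k}\bigr)(s,t)=4\sin^2\!\Bigl(\tfrac{s-t}{2}\Bigr)\,c(s,t),
\end{align*}
with $c$ bi-periodic and $\C^\infty$ (the quotient is smooth because numerator and $4\sin^2(\tfrac{s-t}{2})$ vanish to the same even, second order on the diagonal). Absorbing the $\sin^2$ factor into the kernel, set $\widetilde S(\tau):=4\sin^2(\tfrac{\tau}{2})S(\tau)$, so the first summand becomes $\widetilde S(t-s)\,c(s,t)$ with $c\in\C^\infty$ bi-periodic. A direct computation of the Fourier coefficients of $\widetilde S$—equivalently, convolving the $|n|^{-1}$-decaying coefficients of $S$ against the three-term symbol $\{-\tfrac14,\tfrac12,-\tfrac14\}$ of $\sin^2(\tfrac{\cdot}{2})$—gives $|\widetilde S_n|\lesssim|n|^{-3}$. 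Theorem \ref{thrm:ppo} with $p=-3$ then yields $H^s[0,2\pi]\to H^{s+3}[0,2\pi]$, and transferring back establishes $\mathsf{V}_\theta^{k,\widetilde k}:H^s_\theta(\Gamma)\to H^{s+3}_\theta(\Gamma)$ for all $s\in\IR$.

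The main obstacle is the clean factorization of the logarithmic coefficient together with the sharp Fourier decay. Establishing $J_\theta^k-J_\theta^{\widetilde k}=4\sin^2(\tfrac{s-t}{2})c$ with $c\in\C^\infty$ requires controlling the periodizing sum and the cutoff $\chi_\epsilon$ in the definition of $J_\theta^k$ and showing that, in the difference, only the genuinely $k$-dependent near-diagonal Bessel term survives with its $r^2$-vanishing, the remaining periodized and cut-off contributions being smooth. The second delicate point is the decay $|\widetilde S_n|\lesssim|n|^{-3}$: note that Lemma \ref{lemma:FourierCofs} applied to $\widetilde S$ (which is only $\C^1$, since $\widetilde S''\sim\log|\tau|$ near the origin) delivers merely $|n|^{-2}$, so one must instead exploit the exact structure of $\widetilde S$—its second-difference relation to $S$—to recover the extra order.
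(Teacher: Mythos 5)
Your proposal is correct and follows essentially the same route as the paper's proof: the splitting \eqref{eq:vsplit}, the factorization of $S(t-s)\bigl(J_\theta^k-J_\theta^{\widetilde k}\bigr)$ into a second-order-vanishing trigonometric factor times a smooth bi-periodic quotient (you use $4\sin^2\!\bigl(\tfrac{s-t}{2}\bigr)$ where the paper uses $\modulo{\sin(t-s)}^2$, an immaterial variant), the second-difference computation giving $O(\modulo{n}^{-3})$ Fourier decay of the modified logarithmic kernel, and the application of Theorem \ref{thrm:ppo} followed by density of $\cD_\theta(\Gamma)$. Your closing caveats (controlling the periodized sum and cutoff, and that Lemma \ref{lemma:FourierCofs} alone only yields $\modulo{n}^{-2}$) are exactly the points the paper handles by the explicit coefficient computation, so no gap remains.
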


\begin{proof}
That $\mathsf{V}_\theta^k$ (and $\mathsf{V}_\theta^{\widetilde k}$) may be extended
as claimed follows directly from Theorem \ref{thrm:ppo},
the kernel representation \eqref{eq:vsplit} and the decay of the Fourier
coefficients of $S(t)$ in \eqref{eq:Sdef}
(cf.~\cite[Example 5.6.1]{saranen2013periodic}).
Take $\mu \in D_\theta(\Gamma)$, we have that 
\begin{align*}
\left(\mathsf{V}_\theta^{k,\widetilde{k}}(\mu)\circ\bz\right)(s) = 
e^{\imath  \theta s}\int_0^{2\pi}
 \left(\widehat{G}^k_\theta(s,t)-\widehat{G}^{\widetilde{k}}_\theta(s,t)\right)
 e^{-\imath \theta t}(\mu\circ\bz)(t) \| \bz'(t)\| dt
\end{align*}
as a Lebesgue integral. Moreover,
\begin{align}\label{eq:greendif}
\begin{aligned}
\widehat{G}^k_\theta(s,t)-\widehat{G}^{\widetilde{k}}_\theta(s,t) = 
S(t-s)\left(J_\theta^k(s,t) -J_\theta^{\widetilde{k}}(s,t)\right)+
\left(R_\theta^k(s,t) -R_\theta^{\widetilde{k}}(s,t)\right).
\end{aligned}
\end{align}
Employing Lemma \ref{lemma:FourierCofs}, Theorem
\ref{thrm:ppo} and \cite[Equation 9.1.13]{abramowitz1965handbook}
we see that the second term of the right-hand side of \eqref{eq:greendif} gives rise to a bounded
operator from $H^s[0,2\pi]$ to $H^{s+p}[0,2\pi]$
for any $p >0$. On the other hand, the first term in the right-hand
side of \eqref{eq:greendif} may be decomposed as
\begin{align*}
S(t-s)\left(J_\theta^k(s,t) - J_\theta^{\widetilde{k}}(s,t)\right) =
\left(\modulo{\sin(t-s)}^2{S}(t-s)\right)
\left(\frac{J_\theta^k(s,t) - J_\theta^{\widetilde{k}}(s,t)}
{\modulo{\sin(t-s)}^2}\right).
\end{align*}
One can see (cf.~\cite[Equation 9.1.12]{abramowitz1965handbook})
that the term $({J_\theta^k(s,t) -J_\theta^{\widetilde{k}}(s,t)})
\modulo{\sin(t-s)}^{-2}$ belongs to $\mathcal{C}^\infty(\IR\times\IR)$, whereas the term $\modulo{\sin(t-s)}^2{S}(t-s)$
give rise to an operator of order $-3$. In fact, its Fourier transform is
\begin{align*}
-\frac{1}{2\pi}\int_{0}^{2\pi} \sin(t)^2 \log\left| 2 \sin {\frac{t}{2}} \right| e^{\imath n t}dt &=-\frac{1}{2\pi}\int_{0}^{2\pi} \log\left| 2 \sin {\frac{t}{2}} \right| (e^{\imath (n+2)t}+e^{\imath (n-2)t}-2e^{\imath (n)t})dt \\&= O(n^{-3}), 
\end{align*}
 where the last equality follows from \cite[Example 5.6.1]{saranen2013periodic}. Finally, define 
\begin{align*}
\widehat{\mathsf{V}}^{k,\widetilde k}_\theta(\mu)(s):=
e^{-\imath\theta s}{\mathsf{V}}^{k,\widetilde k}_\theta(\mu)\circ \bz(s).
\end{align*} 
Then,
\begin{align}\label{eq:Vktildek}
\|\mathsf{V}_\theta^{k,\widetilde{k}}(\mu)\|_{H^s_\theta(\Gamma)} \cong
\|\mathsf{V}_\theta^{k,\widetilde{k}}(\mu)\circ\bz\|_{H^s_\theta[0,2\pi]} = 
\|\widehat{\mathsf{V}}_\theta^{k,\widetilde{k}}(\mu)(s)\|_{H^s[0,2\pi]}.
\end{align}
We may now bound the last term in \eqref{eq:Vktildek} by Theorem \ref{thrm:ppo}:
$$ \|\widehat{\mathsf{V}}_\theta^{k,\widetilde{k}}(\mu)\|_{H^{s+3}[0,2\pi]}  \lesssim \| \mu \|_{H^{s}_\theta (\Gamma)}. $$
The proof is completed by the density of $\mathcal{D}_\theta(\Gamma)$
in the corresponding Sobolev space. 
\end{proof}
For the hyper-singular BIO, a similar result requires a technical lemma. To this end, let us define the tangential curl operator: 
\begin{align*}
\scurl_\Gamma \varphi := \frac{1}{\norm{\dot{\bm{z}}(t)}{\IR^2}}
\frac{\;\d}{\;\d\! t}(\varphi\circ\bm{z})(t).
\end{align*}
for any $\varphi \in \cD_\theta(\Gamma)$ and where $\bm{z}$ is a suitable (arbitrary) parametrization of $\Gamma$.

\begin{lmm}\label{lemma:intpartsW}
Let $k$ and $\Gamma$ satisfy Assumptions \ref{ass:woodgeneral} and \ref{ass:single:operator} for $r=0$, respectively, and let $\lambda$ and $\varphi$ belong to $\cD_\theta(\Gamma)$. Then, 
\begin{align*}
\p{\mathsf{W}_\theta^k (\lambda) , {\varphi}}_\Gamma = \p{\mathsf{V}_\theta^{k} (\scurl_\Gamma \lambda), \scurl_\Gamma {\varphi}}_\Gamma +
\p{\widecheck{\mathsf{V}}_\theta^k(\lambda), {\varphi}}_\Gamma,
\end{align*}
where $\p{\cdot,\cdot}_\Gamma$ represents the duality product between
$H^s_\theta(\Gamma)$ and $H^{-s}_\theta(\Gamma)$ for any {$s>0$} and
$\widecheck{\mathsf{V}}_\theta^k$ is the extension
by density of the operator {given by}
\begin{align*}
\p{\widecheck{\mathsf{V}}_\theta^k (\lambda), {\varphi}}_\Gamma := -k^2\int_\Gamma \int_\Gamma  \bm{n}(\bm{x}) \cdot \bm{n}(\bm{y}) G_\theta^k(\bm{x},\bm{y})\lambda(\bm{y}) \overline{{\varphi}}(\bm{x})\;\d\!\by\;\d\!\bx.
\end{align*}
\end{lmm}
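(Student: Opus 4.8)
The plan is to reproduce the classical Maue--Nédélec integration-by-parts identity, adapted to the quasi-periodic setting. By density of $\cD_\theta(\Gamma)$ in the relevant spaces and the continuity supplied by Lemma \ref{lemma:ContBIO}, it suffices to establish the identity for $\lambda,\varphi\in\cD_\theta(\Gamma)$ and then extend by continuity; the operator $\widecheck{\mathsf{V}}_\theta^k$ is itself first defined on smooth densities and extended by density. Writing $\mathsf{W}_\theta^k=-\tN\mathsf{DL}_{\theta,\Gamma}^k$, unfolding the traces and the duality product \eqref{eq:dualprod}, the left-hand side reads, at least formally,
\begin{align*}
\p{\mathsf{W}_\theta^k\lambda,\varphi}_\Gamma=-\int_\Gamma\int_\Gamma \partial_{\bn(\bx)}\partial_{\bn(\by)}G^k_\theta(\bx,\by)\,\lambda(\by)\,\overline{\varphi(\bx)}\;\d\!\by\;\d\!\bx .
\end{align*}

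The first step is a pointwise kernel identity, valid for $\bx\neq\by$ on $\Gamma$. Each summand $e^{-\imath2\pi n\theta}G^k(\bx+2\pi n\bm{e}_1,\by)$ defining $G^k_\theta$ is jointly translation invariant and solves the Helmholtz equation away from its singularity, so $\nabla_\bx G^k_\theta=-\nabla_\by G^k_\theta$ and $\Delta_\bx G^k_\theta=-k^2G^k_\theta$. Consequently $\partial_{\bn(\bx)}\partial_{\bn(\by)}G^k_\theta=-\bn(\bx)^{\top}(\mathrm{Hess}_\bx G^k_\theta)\,\bn(\by)$, and likewise $\scurl_{\Gamma,\bx}\scurl_{\Gamma,\by}G^k_\theta=-\btau(\bx)^{\top}(\mathrm{Hess}_\bx G^k_\theta)\,\btau(\by)$, where $\scurl_{\Gamma,\bx}=\btau(\bx)\cdot\nabla_\bx$ is the tangential derivative. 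Because $\Gamma$ is oriented, the orthonormal frames $\{\bn,\btau\}$ at $\bx$ and $\by$ differ by a rotation, so the skew contributions cancel and $\bn(\bx)^{\top}H\bn(\by)+\btau(\bx)^{\top}H\btau(\by)=(\bn(\bx)\cdot\bn(\by))\,\mathrm{tr}\,H$ for the symmetric Hessian $H$; using $\mathrm{tr}\,\mathrm{Hess}_\bx G^k_\theta=-k^2G^k_\theta$ this yields
\begin{align*}
\partial_{\bn(\bx)}\partial_{\bn(\by)}G^k_\theta(\bx,\by)=-\scurl_{\Gamma,\bx}\scurl_{\Gamma,\by}G^k_\theta(\bx,\by)+k^2\,\bn(\bx)\cdot\bn(\by)\,G^k_\theta(\bx,\by).
\end{align*}
Substituting this splits $\p{\mathsf{W}_\theta^k\lambda,\varphi}_\Gamma$ into a $\scurl_\Gamma\scurl_\Gamma$-kernel term and exactly the double integral defining $\p{\widecheck{\mathsf{V}}_\theta^k\lambda,\varphi}_\Gamma$.

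It remains to integrate by parts along $\Gamma$ in both variables to move each tangential derivative off the kernel onto the densities, producing $\p{\mathsf{V}_\theta^k(\scurl_\Gamma\lambda),\scurl_\Gamma\varphi}_\Gamma$ with a net sign $+1$ from the two transpositions. Here quasi-periodicity is essential: although $\Gamma$ is only one period of the curve and not a closed loop, the endpoint contributions at $\bz(0)$ and $\bz(2\pi)$ carry the factors $e^{\pm2\pi\imath\theta}$ inherited from the quasi-periodicity of $G^k_\theta$, $\lambda$ and $\varphi$, and these cancel pairwise, so no boundary terms survive, exactly as for a closed curve.

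The main obstacle is not this algebra but the rigorous meaning of the formal double integral for $\mathsf{W}_\theta^k$, whose kernel is not locally integrable. I would make it precise by regularization: evaluate $\mathsf{DL}_{\theta,\Gamma}^k\lambda$ at $\bz(s)\pm\epsilon\bn$, where the kernel is smooth and all manipulations are unambiguous, and then pass to the limit $\epsilon\to0$ using the mapping and jump properties of the layer potentials from Lemmas \ref{lemma:Contlayer} and \ref{lemma:ContBIO}; since the tangential-derivative kernels are only weakly singular (order $-1$), the limit and $\mathsf{V}_\theta^k(\scurl_\Gamma\lambda)$ are well defined. With merely $r=0$ regularity the objects $\bn$, $\btau$ and $\norm{\dot{\bz}}{}$ exist only almost everywhere, but as $\lambda,\varphi$ are smooth this suffices both for the integration by parts on $\Gamma$ and for the final extension by density.
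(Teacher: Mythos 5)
Your proposal is correct in substance but takes a genuinely different, self-contained route from the paper's. The paper's proof is essentially two lines: it verifies the quasi-periodic integration-by-parts identity $\p{\scurl_\Gamma \lambda,\varphi}_\Gamma=-\p{\lambda,\scurl_\Gamma\varphi}_\Gamma$ on the parametrized period---noting that the endpoint contributions at $t=0$ and $t=2\pi$ cancel because $(\lambda\circ\bz)\,\overline{(\varphi\circ\bz)}$ is genuinely $2\pi$-periodic---and then imports the classical Maue-type representation \emph{verbatim} from the standard case (Steinbach, Theorem 6.15), which already contains both the kernel algebra and the regularization you rederive. You instead reprove the classical identity from first principles, and your algebra checks out: in two dimensions, $H+R^{\top}HR=(\operatorname{tr}H)\,\mathsf{I}$ for symmetric $H$ and a quarter-turn $R$, which, combined with translation invariance and $\Delta_{\bx}G^k_\theta=-k^2G^k_\theta$ off the singular set, yields exactly
\begin{align*}
\partial_{\bn(\bx)}\partial_{\bn(\by)}G^k_\theta=-\scurl_{\Gamma,\bx}\scurl_{\Gamma,\by}G^k_\theta+k^2\,\bn(\bx)\cdot\bn(\by)\,G^k_\theta,
\end{align*}
the net sign $+1$ after the two transpositions is right, and your observation that the quasi-periodic phases of $G^k_\theta$, $\lambda$ and $\overline{\varphi}$ cancel pairwise at the cell endpoints is precisely the only point where the paper's proof does any new work. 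What your route buys is transparency---one sees exactly where quasi-periodicity enters, in the kernel and in the densities---at the cost of redoing the analytic core that the paper's citation absorbs.

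That analytic core is your one soft spot. The $\epsilon$-regularization along parallel curves $\bz(s)\pm\epsilon\bn(s)$ is only sketched, and at the stated regularity $r=0$ (Lipschitz) it is delicate: $\bn$ is merely $L^\infty$, so the shifted curve need not be Lipschitz or injective, and identifying the $s$-derivative of $G^k_\theta(\bz(s)+\epsilon\bn(s),\by)$ with the tangential derivative $\scurl_{\Gamma,\bx}$ produces a spurious term involving the derivative of $\bn$, which does not exist pointwise for Lipschitz curves. The standard repair---implicit in the classical proof the paper cites---is to define $\tN\mathsf{DL}^{k}_{\theta,\Gamma}$ weakly via Green's first identity on $H^1_{\theta,\Delta}$ (as this paper's trace framework does) and integrate by parts in the volume rather than along shifted curves, or alternatively to establish the identity for smooth curves and conclude by an approximation argument. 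Since $\lambda,\varphi\in\cD_\theta(\Gamma)$ and all remaining kernels are weakly singular, either repair closes your argument; as written, the limit passage is an outline rather than a complete proof.
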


\begin{proof}
Notice that for $\lambda$, $\varphi$  in $\cD_\theta(\Gamma)$, it holds that
\begin{align*}
\p{\scurl_\Gamma \lambda, {\varphi}}_\Gamma = \int_{0}^{2\pi} 
\frac{\;\d\!{(\lambda\circ \bm{z})(t)}}{\;\d\!t}\overline{({\varphi}\circ \bm{z})}(t)\;\d\!t =
-\int_{0}^{2\pi} \frac{\;\d\!{\overline{({\varphi}\circ \bm{z})}(t)}}{\;\d\!t}
(\lambda\circ \bm{z})(t) \;\d\!t,
\end{align*}
{where the border terms cancel each other out due to the quasi-periodicity of $\lambda$ and $\varphi$.}
Hence, the result for quasi-periodic functions follows
\emph{verbatim} from the standard case (see,
for instance, \cite[Theorem 6.15]{Steinbach:2007aa}).
\end{proof}

\begin{crllr}
Under the {assumptions} of Proposition \ref{prop:vscompact}, consider $\mathsf{W}_\theta^{k}$, and  $\mathsf{W}_\theta^{\widetilde{k}}$, the hyper-singular operators defined as in \eqref{eq:BIOintro} and where we drop the $\Gamma$ subscript. The  operator $\mathsf{W}_\theta^{k,\widetilde{k}} := \mathsf{W}_\theta^k- \mathsf{W}_\theta^{\widetilde{k}}$ can be
extended to
\begin{align*}
\mathsf{W}_\theta^{k,\widetilde{k}} : H^s_\theta(\Gamma) \to H^{s+1}_\theta(\Gamma),
\end{align*}
as a bounded linear operator for every $s \in \IR$.
\end{crllr}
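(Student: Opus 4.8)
The plan is to reduce the hyper-singular difference to the single-layer difference through the integration-by-parts identity of Lemma \ref{lemma:intpartsW}, and then to exploit the three-order smoothing of $\mathsf{V}_\theta^{k,\widetilde{k}}$ furnished by Proposition \ref{prop:vscompact}. First I would apply Lemma \ref{lemma:intpartsW} to each of the two wavenumbers and subtract, obtaining for all $\lambda,\varphi\in\cD_\theta(\Gamma)$ the weak identity
\begin{align*}
\p{\mathsf{W}_\theta^{k,\widetilde{k}} (\lambda) , {\varphi}}_\Gamma = \p{\mathsf{V}_\theta^{k,\widetilde{k}} (\scurl_\Gamma \lambda), \scurl_\Gamma {\varphi}}_\Gamma + \p{\widecheck{\mathsf{V}}_\theta^{k,\widetilde{k}}(\lambda), {\varphi}}_\Gamma,
\end{align*}
where $\widecheck{\mathsf{V}}_\theta^{k,\widetilde{k}}:=\widecheck{\mathsf{V}}_\theta^{k}-\widecheck{\mathsf{V}}_\theta^{\widetilde{k}}$ and linearity of the pairing in its first slot has been used. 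Since $\scurl_\Gamma$ is a first-order tangential derivative with $\mathcal{C}^\infty$ coefficient (as $\Gamma\in\mathcal{C}^\infty$), it is bounded as $\scurl_\Gamma:H^{\sigma}_\theta(\Gamma)\to H^{\sigma-1}_\theta(\Gamma)$ for every $\sigma\in\IR$. I would then estimate $\|\mathsf{W}_\theta^{k,\widetilde{k}}(\lambda)\|_{H^{s+1}_\theta(\Gamma)}$ by duality, bounding the right-hand side of the identity over all test functions $\varphi\in\cD_\theta(\Gamma)$ normalized in $H^{-s-1}_\theta(\Gamma)$.

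For the first term, Proposition \ref{prop:vscompact} gives $\mathsf{V}_\theta^{k,\widetilde{k}}:H^{s-1}_\theta(\Gamma)\to H^{s+2}_\theta(\Gamma)$, whence $\|\mathsf{V}_\theta^{k,\widetilde{k}}(\scurl_\Gamma\lambda)\|_{H^{s+2}_\theta(\Gamma)}\lesssim\|\scurl_\Gamma\lambda\|_{H^{s-1}_\theta(\Gamma)}\lesssim\|\lambda\|_{H^s_\theta(\Gamma)}$. Pairing this against $\scurl_\Gamma\varphi$ in the $H^{s+2}_\theta(\Gamma)$--$H^{-s-2}_\theta(\Gamma)$ duality and using $\|\scurl_\Gamma\varphi\|_{H^{-s-2}_\theta(\Gamma)}\lesssim\|\varphi\|_{H^{-s-1}_\theta(\Gamma)}$ bounds this contribution by $\|\lambda\|_{H^s_\theta(\Gamma)}\|\varphi\|_{H^{-s-1}_\theta(\Gamma)}$. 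The key bookkeeping is that the three orders gained by $\mathsf{V}_\theta^{k,\widetilde{k}}$ offset the two orders lost to the flanking curls, leaving a net gain of one order, exactly as claimed.

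The main obstacle, and the reason the gain is only one order rather than three, is the lower-order remainder $\widecheck{\mathsf{V}}_\theta^{k,\widetilde{k}}$. Its kernel is $-\bm{n}(\bx)\cdot\bm{n}(\by)\bigl(k^2G^k_\theta(\bx,\by)-\widetilde{k}^2G^{\widetilde{k}}_\theta(\bx,\by)\bigr)$, which does not collapse to a clean Green's-function difference because the coefficients $k^2$ and $\widetilde{k}^2$ differ. I would split it as
\begin{align*}
-\bm{n}\cdot\bm{n}\,\bigl(k^2G^k_\theta-\widetilde{k}^2G^{\widetilde{k}}_\theta\bigr)=-k^2\,\bm{n}\cdot\bm{n}\,\bigl(G^k_\theta-G^{\widetilde{k}}_\theta\bigr)-(k^2-\widetilde{k}^2)\,\bm{n}\cdot\bm{n}\,G^{\widetilde{k}}_\theta.
\end{align*}
The factor $\bm{n}(\bx)\cdot\bm{n}(\by)$ is smooth and bi-periodic (since $\Gamma\in\mathcal{C}^\infty$), so it is absorbed into the amplitude of the kernel representation without changing pseudo-differential orders. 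The first summand repeats the argument of Proposition \ref{prop:vscompact} verbatim and yields an operator of order $-3$; the second summand is a fixed multiple of a single-layer-type operator whose logarithmic kernel $S$ produces, via Theorem \ref{thrm:ppo} and the Fourier decay of $S$ in \eqref{eq:Sdef}, an operator of order $-1$, i.e. bounded $H^s_\theta(\Gamma)\to H^{s+1}_\theta(\Gamma)$. This order $-1$ piece dominates, so $\|\widecheck{\mathsf{V}}_\theta^{k,\widetilde{k}}(\lambda)\|_{H^{s+1}_\theta(\Gamma)}\lesssim\|\lambda\|_{H^s_\theta(\Gamma)}$ and its pairing with $\varphi$ is controlled by $\|\lambda\|_{H^s_\theta(\Gamma)}\|\varphi\|_{H^{-s-1}_\theta(\Gamma)}$. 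Combining both estimates and invoking the density of $\cD_\theta(\Gamma)$ in $H^s_\theta(\Gamma)$ completes the extension to every $s\in\IR$.
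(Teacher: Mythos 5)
Your proof is correct and follows essentially the same route as the paper: the integration-by-parts identity of Lemma \ref{lemma:intpartsW} applied to both wavenumbers, the three-order gain of $\mathsf{V}_\theta^{k,\widetilde{k}}$ from Proposition \ref{prop:vscompact} offset by the two flanking $\scurl_\Gamma$ operators, an order $-1$ bound for the remainder term, and a duality-plus-density conclusion. The only deviation is that your kernel splitting of $\widecheck{\mathsf{V}}_\theta^{k,\widetilde{k}}$ into an order $-3$ piece and an order $-1$ piece, while correct, is unnecessary: the paper simply observes that $\widecheck{\mathsf{V}}_\theta^{k}$ and $\widecheck{\mathsf{V}}_\theta^{\widetilde{k}}$ are each individually of order $-1$ (via Theorem \ref{thrm:ppo} and the Fourier decay of $S$), which already suffices because only one order of smoothing is claimed for the hyper-singular difference.
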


\begin{proof}
Let $\lambda$,  ${\varphi}$ in $\mathcal{D}_\theta(\Gamma)$. By Lemma \ref{lemma:intpartsW}, we have that
\begin{align*}
\p{\mathsf{W}_\theta^{k,\widetilde{k}} (\lambda),{\varphi}}_\Gamma = 
\p{\mathsf{V}_\theta^{k,\widetilde{k}} (\scurl_\Gamma \lambda), \scurl_\Gamma {\varphi}}_\Gamma +
\p{(\widecheck{\mathsf{V}}_\theta^k-\widecheck{\mathsf{V}}_\theta^{\widetilde{k}})(\lambda), {\varphi}}_\Gamma.
\end{align*}
Using Proposition \ref{prop:vscompact}, one obtains
\begin{align*}
\modulo{\p{\mathsf{W}_\theta^{k,\widetilde{k}} (\lambda),{\varphi}}_\Gamma} \lesssim 
\|\scurl_\Gamma \lambda \|_{H^{s-1}_\theta(\Gamma)} \| \scurl_\Gamma {\varphi}\|_{H^{-s-2}_\theta(\Gamma)} + \|\lambda\|_{H^{s}_\theta(\Gamma)}
\|{\varphi}\|_{H^{-s-1}_\theta(\Gamma)}.
\end{align*}
Where the inequality for the second term of the right-hand side is obtained using that both $(\widecheck{\mathsf{V}}_\theta^k,\widecheck{\mathsf{V}}_\theta^{\widetilde{k}})$ are operators of order $-1$ (this follow from Theorem  \ref{thrm:ppo} and \cite[Example 5.6.1]{saranen2013periodic}. 
Then, since the $\scurl_\Gamma$ operator is a first-order differential operator, it holds that
\begin{align*}
|\p{\mathsf{W}_\theta^{k,\widetilde{k}} (\lambda),{\varphi}}_\Gamma| \lesssim 
 \|\lambda\|_{H^{s}_\theta(\Gamma)}
\|{\varphi}\|_{H^{-s-1}_\theta(\Gamma)},
\end{align*}
and the result follows by a duality argument and recalling the
density of $\cD_\theta(\Gamma)$ in our quasi-periodic Sobolev spaces.
\end{proof}

We now consider the Dirichlet traces of the double layer potential and its adjoint, defined in Section \ref{sec:BIOs} as the principal value integrals,
\begin{align*}
(\mathsf{K}'^k_\theta(\mu)\circ \bm{r})(s)  = \dashint_{0}^{2\pi} 
{\mathcal{K}'}^k_\theta (s,t) {(\mu\circ\bm{z})}(t)\|\dot{\bm{z}}(t)\|\;\d\!t ,\\
(\mathsf{K}_\theta^k(\lambda)\circ\bm{r})(s)  = \dashint_{0}^{2\pi} 
{\mathcal{K}}^k_\theta (s,t) {(\lambda\circ\bm{z})}(t)\|\dot{\bm{z}}(t)\|\;\d\!t,
\end{align*}
for which we have dropped the $\Gamma$ index momentarily, and where the kernels are given by \cite[Section 3]{bruno2014rapidly}:
\begin{align*}
\begin{split}
{\mathcal{K}'}^k_\theta (s,t) := -\frac{\imath k}{4} \sum_{j=-\infty}^\infty\Bigg{(} \frac{H_1^{(1)}(k\| 
\bm{z}(s) +2\pi j \bm{e_1} - \bm{z}(t)\|)}
{\| 
\bm{z}(s) +2\pi j \bm{e_1} - \bm{z}(t)\|}
 e^{-\imath 2 \pi j \theta}\ \times \\
(\bm{z}(s) +2\pi j \bm{e_1} - \bm{z}(t)) \cdot 
\bm{n}(\bm{z}(s))\Bigg{)} ,
\end{split} \\
\begin{split}
{\mathcal{K}}^k_\theta (s,t) := \frac{\imath k}{4} \sum_{j=-\infty}^\infty\Bigg{(} \frac{H_1^{(1)}(k\| 
\bm{z}(s) +2\pi j \bm{e_1} - \bm{z}(t)\|)}
{\| 
\bm{z}(s) +2\pi j \bm{e_1} - \bm{z}(t)\|}
 e^{-\imath 2 \pi j \theta}\ \times  \\ 
 (\bm{z}(s) +2\pi j \bm{e_1} - \bm{z}(t)) \cdot \bm{n}(\bm{z}(t))\Bigg{)}.
\end{split}
\end{align*}
where $\bm{n}$ denotes the unitary normal vector {exterior to $\Omega$ (recall $\Gamma:=\partial^{\cG}\Omega$)}. These can be written as \cite[Equation 9.1.11]{abramowitz1965handbook}
\begin{align}
\label{dl:splits}
\begin{gathered}
{\mathcal{K}'}^k_\theta (s,t) = S_1(t-s) J^{k}_{1,\theta}(s,t) +R_{1,\theta}^{k}(s,t)\\
{\mathcal{K}}^k_\theta (s,t) = S_1(t-s) J_{2,\theta}^k(s,t) +R_{2,\theta}^k(s,t),
\end{gathered}
\end{align}
wherein
\begin{align*}
S_1(t-s) &:=  -\frac{1}{2\pi} \log\left( 2 \sin{\left(\half|t-s|\right)} \right) |\sin(t-s)|^2,\\
J_{1,\theta}^k(s,t) &:=  -k \sum_{j=-\infty}^\infty\Bigg{(}  \frac{J_1(k \| \bm{z}(s) +  2\pi j \bm{e_1}-\bm{z}(t)\|)}
{\| \bm{z}(s) +  2\pi j \bm{e_1}-\bm{z}(t)\|} e^{-\imath 2\pi  j \theta}\times\\
 &\qquad\qquad\frac{(\bm{z}(s) +2\pi j \bm{e_1} - \bm{z}(t)) \cdot \bm{n}(\bm{z}(s))}{|\sin(t-s)|^2} \chi_\epsilon(t-2\pi j -s)\Bigg),
\\
J_{2,\theta}^k(s,t) &:=  k\sum_{j=-\infty}^\infty \Bigg{(} \frac{J_1(k \| \bm{z}(s) +  2\pi j \bm{e_1}-\bm{z}(t)\|)}
{\| \bm{z}(s) +  2\pi j \bm{e_1}-\bm{z}(t)\|} e^{-\imath 2\pi  j \theta}\times\\
&\qquad\qquad \frac{(\bm{z}(s) +2\pi j \bm{e_1} - \bm{z}(t)) \cdot \bm{n}(\bm{z}(t))}{|\sin(t-s)|^2} \chi_\epsilon(t-2\pi j -s)\Bigg{)},
\end{align*}
and
\begin{align*}
R_{1,\theta}^{k}(s,t) &:= {\mathcal{K}'}^k_\theta (s,t) - S_1(t-s) J^{k}_{1,\theta}(s,t), \\
R_{2,\theta}^k(s,t) &:= {\mathcal{K}}^k_\theta (s,t) - S_1(t-s) J_{2,\theta}^k(s,t).
\end{align*}
As in the proof of Proposition \ref{prop:vscompact}, we have that $ |S_{1,n}| \lesssim n^{-3},$ whence, arguing as in Proposition \ref{prop:vscompact}, we have the following result.

\begin{prpstn}
For $k$ and $\Gamma$ as in Assumptions \ref{ass:woodgeneral} and \ref{ass:single:operator} with $r= \infty$, respectively, and for any $s \in \IR$, it holds that
\begin{gather*}
\mathsf{K}'^k_\theta : H^s_\theta(\Gamma) \rightarrow H^{s +3}_\theta(\Gamma),\quad
\mathsf{K}_\theta^k : H^{s}_\theta(\Gamma) \rightarrow H^{s +3}_\theta(\Gamma),
\end{gather*}
are bounded and linear operators.
\end{prpstn}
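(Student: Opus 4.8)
The plan is to mirror the argument of Proposition \ref{prop:vscompact}, now exploiting the kernel splittings in \eqref{dl:splits} together with the improved decay $|S_{1,n}|\lesssim n^{-3}$ of the Fourier coefficients of $S_1$, which means that $S_1$ induces a periodic pseudo-differential operator of order $-3$. As in that proof, I would first pass to honest $2\pi$-periodic kernels by conjugating with the quasi-periodic factor: writing $(\mu\circ\bm{z})(t)=e^{\imath\theta t}\widetilde\mu(t)$ with $\widetilde\mu$ periodic and forming $e^{-\imath\theta s}(\mathsf{K}'^k_\theta(\mu)\circ\bm{z})(s)$ (and analogously for $\mathsf{K}^k_\theta$), the effective kernel becomes $e^{-\imath\theta(s-t)}\mathcal{K}'^k_\theta(s,t)$, which is bi-periodic and of the form required by Theorem \ref{thrm:ppo}. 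Using a norm equivalence of the type \eqref{eq:Vktildek}, it then suffices to bound the conjugated operators as maps $H^s[0,2\pi]\to H^{s+3}[0,2\pi]$.

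For the remainder terms $R^k_{1,\theta}$ and $R^k_{2,\theta}$, I would invoke the Hankel expansion \cite[Equation 9.1.11]{abramowitz1965handbook} exactly as was done for $R^k_\theta$ in the single-layer case to conclude they belong to $\mathcal{C}^\infty(\IR\times\IR)$; by Theorem \ref{thrm:ppo} (with a smooth, hence rapidly decaying, distribution $S$) they therefore give rise to smoothing operators mapping $H^s[0,2\pi]$ into $H^{s+p}[0,2\pi]$ for every $p>0$, in particular for $p=3$. The substantive work is the principal parts $S_1(t-s)J^k_{1,\theta}(s,t)$ and $S_1(t-s)J^k_{2,\theta}(s,t)$: here I would verify that $J^k_{1,\theta}$ and $J^k_{2,\theta}$ are smooth bi-periodic amplitudes $a(s,t)\in\mathcal{C}^\infty(\IR\times\IR)$, absorb the smooth weight $\|\dot{\bm{z}}(t)\|$ and the conjugation factor $e^{-\imath\theta(s-t)}$ into $a$ (which preserves smoothness and periodicity), and then apply Theorem \ref{thrm:ppo} with $S=S_1$ and $p=-3$, yielding the gain of three derivatives.

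The main obstacle is precisely the smoothness of $J^k_{1,\theta}$ and $J^k_{2,\theta}$, where one divides by $|\sin(t-s)|^2$. I would argue that the apparent singularity on the diagonal is removable: with $\rho:=\|\bm{z}(s)+2\pi j\bm{e_1}-\bm{z}(t)\|$, the scalar factor $J_1(k\rho)/\rho$ is, by \cite[Equation 9.1.12]{abramowitz1965handbook}, a smooth even function of $\rho$ and hence a smooth function of $\rho^2$, which is $\mathcal{C}^\infty$ by the smoothness of $\Gamma$; and for the $j=0$ image the numerators $(\bm{z}(s)-\bm{z}(t))\cdot\bm{n}(\bm{z}(s))$ and $(\bm{z}(s)-\bm{z}(t))\cdot\bm{n}(\bm{z}(t))$ vanish to second order as $t\to s$, since $\dot{\bm{z}}(s)\cdot\bm{n}(\bm{z}(s))=0$ forces the first-order Taylor term to drop. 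Thus the quotient by $|\sin(t-s)|^2$ extends smoothly across the diagonal, while the cutoff $\chi_\epsilon(t-2\pi j-s)$ removes all other images from a neighbourhood of the diagonal, so only finitely many $j$ contribute for $(s,t)$ in a period and smoothness is preserved. Once this is in hand, the estimate $\|e^{-\imath\theta s}(\mathsf{K}'^k_\theta(\mu)\circ\bm{z})\|_{H^{s+3}[0,2\pi]}\lesssim\|\mu\|_{H^s_\theta(\Gamma)}$ follows from Theorem \ref{thrm:ppo}, and the proof is completed by the density of $\cD_\theta(\Gamma)$ in the quasi-periodic Sobolev spaces, exactly as at the end of Proposition \ref{prop:vscompact}.
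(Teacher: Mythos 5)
Your proposal is correct and takes essentially the same route as the paper, whose proof is simply the remark that $|S_{1,n}|\lesssim n^{-3}$ together with ``arguing as in Proposition \ref{prop:vscompact}''; you have merely made explicit the steps the paper leaves implicit (conjugation by $e^{\imath\theta(\cdot)}$ to obtain bi-periodic kernels, smoothness of $R^k_{1,\theta}$, $R^k_{2,\theta}$, the removable diagonal singularity of $J^k_{1,\theta}$, $J^k_{2,\theta}$ via the second-order vanishing of $(\bm{z}(s)-\bm{z}(t))\cdot\bm{n}$, and the application of Theorem \ref{thrm:ppo} with $p=-3$). No gaps.
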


As in the case of the weakly and hyper-singular operator, we define:
\begin{gather*}
{\mathsf{K}'}_\theta^{k,\widetilde{k}} := {\mathsf{K}'}^k_\theta - {\mathsf{K}'}_\theta^{\widetilde{k}},\quad
\mathsf{K}_\theta^{k,\widetilde{k}} := \mathsf{K}_\theta^k - \mathsf{K}_\theta^{\widetilde{k}}.
\end{gather*}
Finally, we obtain our compactness result.
\begin{prpstn}
\label{prop:biecompacteness}
Let $k$ and $\widetilde{k}$ satisfy Assumption \ref{ass:woodgeneral}, let $\Gamma$ be as in Assumption \ref{ass:single:operator} with $r = \infty$. Then, for $s \in \IR$, the following operators 
\begin{gather*}
\mathsf{V}_\theta^{k,\widetilde{k}} : H_\theta^{s}(\Gamma) \rightarrow  H_\theta^{s+3-\epsilon}(\Gamma), \quad
\mathsf{W}_\theta^{k,\widetilde{k}} : H_\theta^{s}(\Gamma) \rightarrow  H_\theta^{s+1-\epsilon}(\Gamma), \\
\mathsf{K}_\theta^{k,\widetilde{k}} : H_\theta^{s}(\Gamma) \rightarrow  H_\theta^{s+3-\epsilon}(\Gamma), \quad
{\mathsf{K}'}_\theta^{k,\widetilde{k}} : H_\theta^{s}(\Gamma) \rightarrow  H_\theta^{s+3-\epsilon}(\Gamma),
\end{gather*}
are compact for every $\epsilon>0$.
\end{prpstn}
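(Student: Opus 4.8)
The plan is to deduce compactness from two ingredients already in place: the regularity-gaining boundedness of each difference operator, and the compact Sobolev embedding provided by the Rellich Theorem (Theorem \ref{trm:Rellich}). Each of the four operators has already been shown to gain a fixed number of derivatives. By Proposition \ref{prop:vscompact}, the operator $\mathsf{V}_\theta^{k,\widetilde{k}}$ maps $H^s_\theta(\Gamma)$ boundedly into $H^{s+3}_\theta(\Gamma)$; by the corollary following Lemma \ref{lemma:intpartsW}, $\mathsf{W}_\theta^{k,\widetilde{k}}$ maps $H^s_\theta(\Gamma)$ boundedly into $H^{s+1}_\theta(\Gamma)$; and, taking differences of the single-wavenumber mapping properties of $\mathsf{K}_\theta^k$ and ${\mathsf{K}'}^k_\theta$ established in the preceding proposition, both $\mathsf{K}_\theta^{k,\widetilde{k}}$ and ${\mathsf{K}'}_\theta^{k,\widetilde{k}}$ map $H^s_\theta(\Gamma)$ boundedly into $H^{s+3}_\theta(\Gamma)$.

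Concretely, I would fix $\epsilon > 0$ and, for each operator, write it as a composition $\iota \circ T$, where $T : H^s_\theta(\Gamma) \to H^{s+r}_\theta(\Gamma)$ is the relevant bounded operator just recalled (with $r = 3$ for $\mathsf{V}_\theta^{k,\widetilde{k}}$, $\mathsf{K}_\theta^{k,\widetilde{k}}$ and ${\mathsf{K}'}_\theta^{k,\widetilde{k}}$, and $r = 1$ for $\mathsf{W}_\theta^{k,\widetilde{k}}$), and $\iota : H^{s+r}_\theta(\Gamma) \hookrightarrow H^{s+r-\epsilon}_\theta(\Gamma)$ is the canonical embedding. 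Since $s+r-\epsilon < s+r$, Theorem \ref{trm:Rellich} guarantees that $\iota$ is compact. As the composition of a bounded linear operator with a compact one is again compact, each difference operator is compact as a map $H^s_\theta(\Gamma) \to H^{s+r-\epsilon}_\theta(\Gamma)$, which is exactly the asserted statement.

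I do not anticipate a genuine obstacle: the substance of the result is entirely inherited from the regularity-gaining bounds proved earlier, and the argument is the standard pattern in which a smoothing boundary integral operator becomes compact once composed with a Rellich embedding. The one point that must be handled with care is the unavoidable loss of $\epsilon$ in the target space: the earlier propositions supply only boundedness (not compactness) into $H^{s+r}_\theta(\Gamma)$, so compactness can be recovered only after a strict reduction of the Sobolev order, and this is precisely what the strict inequality required by Theorem \ref{trm:Rellich} forces for every $\epsilon > 0$.
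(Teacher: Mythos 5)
Your proposal is correct and coincides with the paper's own (very brief) proof, which likewise deduces compactness directly from the previously established mapping properties of $\mathsf{V}_\theta^{k,\widetilde{k}}$, $\mathsf{W}_\theta^{k,\widetilde{k}}$, $\mathsf{K}_\theta^{k,\widetilde{k}}$, ${\mathsf{K}'}_\theta^{k,\widetilde{k}}$ composed with the compact embedding of Theorem \ref{trm:Rellich}. Your write-up simply makes explicit the factorization through the embedding $H^{s+r}_\theta(\Gamma)\hookrightarrow H^{s+r-\epsilon}_\theta(\Gamma)$ and the reason the $\epsilon$-loss is unavoidable, which is exactly what the paper leaves implicit.
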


\begin{proof}
The result is direct from the mapping properties shown and Theorem \ref{trm:Rellich}.
\end{proof}

Lastly, we require the compactness of the operator resulting
form taking traces of the single and double layer operators acting on densities lying on a boundary $\Gamma_1$ over another
$x_1$-periodic curve, say $\Gamma_2$, that does not intersect with $\Gamma_1$.
Let us denote by $\gamma^2_d, \gamma^2_n$ Dirichlet and Neumann traces
over $\Gamma_2$, respectively. Then, by an application of Lemma
\ref{lemma:FourierCofs}, Theorem \ref{trm:Rellich} and Theorem
\ref{thrm:ppo}, we obtain the following result. 

\begin{prpstn}
\label{prop:crosscompacteness}
Let $k$ satisfy Assumption \ref{ass:woodgeneral}. If $\Gamma_1$ and $\Gamma_2$ are $x_1$-periodic $\mathcal{C}^\infty$-{Jordan} curves then the application of the following traces to the layer potentials:
\begin{align*}
\tD^2 \mathsf{SL}_{\theta,\Gamma_1}^{k} : H^{s_1}_\theta(\Gamma_1) \rightarrow H_\theta^{s_2}(\Gamma_2),\quad
\tN^2 \mathsf{SL}_{\theta,\Gamma_1}^{k} : H^{s_1}_\theta(\Gamma_1) \rightarrow H_\theta^{s_2}(\Gamma_2),\\
\tD^2 \mathsf{DL}_{\theta,\Gamma_1}^{k} : H^{s_1}_\theta(\Gamma_1) \rightarrow H_\theta^{s_2}(\Gamma_2),\quad
\tN^2 \mathsf{DL}_{\theta,\Gamma_1}^{k} : H^{s_1}_\theta(\Gamma_1) \rightarrow H_\theta^{s_2}(\Gamma_2),
\end{align*}
are compact operators for any choice of $s_1$, $s_2\in\IR$.
The result holds regardless of the direction from which the traces are taken.
\end{prpstn}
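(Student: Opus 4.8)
The plan is to exploit that $\Gamma_1$ and $\Gamma_2$ are \emph{disjoint} periodic curves in order to show that, after pulling back to $[0,2\pi]$ and extracting the quasi-periodic phase as in \eqref{eq:vkernel}, each of the four operators is an integral operator with a smooth, bi-periodic kernel; compactness then follows by composing an arbitrary-order smoothing estimate with the Rellich embedding of Theorem \ref{trm:Rellich}. Writing $\bm{z}_1$, $\bm{z}_2$ for $\mathcal{C}^\infty$ parametrizations of $\Gamma_1$, $\Gamma_2$, each operator takes the form
\begin{align*}
\phi \mapsto \int_0^{2\pi}\kappa(s,t)\,(\phi\circ\bm{z}_1)(t)\,\|\dot{\bm{z}}_1(t)\|\,\d\!t,
\end{align*}
where the kernel $\kappa(s,t)$ is obtained from $G^k_\theta(\bm{z}_2(s),\bm{z}_1(t))$ by applying, according to the operator, the Dirichlet or Neumann trace over $\Gamma_2$ in the variable $\bm{x}=\bm{z}_2(s)$ and/or the interior Neumann trace over $\Gamma_1$ in the variable $\bm{y}=\bm{z}_1(t)$.

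First I would establish that $\kappa \in \mathcal{C}^\infty([0,2\pi]\times[0,2\pi])$. The only singularities of the quasi-periodic Green's function \eqref{eq:qpgreenfunc} lie on the set $\{\bm{x}-\bm{y}\in 2\pi\IZ\,\bm{e}_1\}$; since $\widetilde\Gamma_1$ and $\widetilde\Gamma_2$ are $x_1$-periodic with period $2\pi$ and do not intersect, for $\bm{x}\in\Gamma_2$ and $\bm{y}\in\Gamma_1$ one has $\bm{z}_2(s)-\bm{z}_1(t)\neq 2\pi n\,\bm{e}_1$ for every $n\in\IZ$. Hence, under Assumption \ref{ass:woodgeneral}, the defining series together with its term-by-term derivatives converge uniformly on compact sets to smooth functions, so that $G^k_\theta$ and all of its derivatives are smooth on $\Gamma_2\times\Gamma_1$; the (smooth) trace operations in $s$ and $t$ therefore leave $\kappa$ smooth, and the periodicity of the parametrizations makes it bi-periodic.

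Next I would convert the smoothness of $\kappa$ into an arbitrary-order smoothing estimate. Since $\kappa$ is $\mathcal{C}^\infty$, Lemma \ref{lemma:FourierCofs} shows that its Fourier coefficients (in either variable) decay faster than any power of $|n|$; applying Theorem \ref{thrm:ppo} with this decay, equivalently with smoothing order arbitrarily large, then yields that each operator maps $H^{s_1}_\theta(\Gamma_1)$ continuously into $H^{s'}_\theta(\Gamma_2)$ for \emph{every} $s'\in\IR$. Choosing $s'=s_2+1>s_2$ and composing with the compact embedding $H^{s_2+1}_\theta(\Gamma_2)\hookrightarrow H^{s_2}_\theta(\Gamma_2)$ supplied by Theorem \ref{trm:Rellich} gives compactness of each operator from $H^{s_1}_\theta(\Gamma_1)$ to $H^{s_2}_\theta(\Gamma_2)$. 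Finally, because $\Gamma_1\cap\Gamma_2=\emptyset$, the layer potentials solve the Helmholtz equation in a neighborhood of $\Gamma_2$ and exhibit no jump there, so their interior and exterior traces over $\Gamma_2$ coincide; this is exactly the claimed independence of the direction of approach.

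The main obstacle I anticipate is the first step: rigorously justifying that the quasi-periodic sum in \eqref{eq:qpgreenfunc}, \emph{after} the normal-derivative trace operations needed for the Neumann and double-layer kernels, still converges to a $\mathcal{C}^\infty$ function. This requires controlling the decay of the differentiated Hankel-function terms uniformly in $(s,t)$ over the disjoint curves and invoking Assumption \ref{ass:woodgeneral} to guarantee convergence of the lattice sum; once smoothness of $\kappa$ is secured, the remaining steps are routine applications of the quoted results.
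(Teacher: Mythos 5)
Your proposal is correct and follows essentially the same route as the paper, whose entire justification is the one-line remark preceding the proposition that the result follows ``by an application of Lemma \ref{lemma:FourierCofs}, Theorem \ref{trm:Rellich} and Theorem \ref{thrm:ppo}''---i.e.\ smoothness of the pulled-back kernel on the disjoint curves, rapid Fourier decay, arbitrary-order smoothing via Theorem \ref{thrm:ppo}, and compactness from the Rellich embedding, exactly as you argue. Your expanded treatment (including the smoothness of $G^k_\theta$ off the singular set $\{\bm{x}-\bm{y}\in 2\pi\IZ\,\bm{e}_1\}$ and the coincidence of the two traces across $\Gamma_2$) simply fills in details the paper leaves implicit.
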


\begin{rmrk}
\label{rem:limitedreg}
For the main results in this section, we have assumed the
interfaces to be of class $\mathcal{C}^\infty$. While this simplifies the analysis, we could obtain similar results
with less {stringent} regularity {requirements}. Consider $k$  and $\widetilde{k}$ satisfying Assumption \ref{ass:woodgeneral} and $\Gamma$ as in Assumption \ref{ass:single:operator} with $r \in [1,\infty)$, and the weakly-singular operator $V^k_\theta$ (where we have omitted the $\Gamma$ sub-index {momentarily}). {The expression in \eqref{eq:vsplit} still holds for the kernel of $V^k_\theta$, but $R^k_\theta$ and $J^k_\theta$ would be only of class $\mathcal{C}^{r,1}$, instead of arbitrarily} smooth. {Corollary 6.1.1 and Lemma 6.1.3 in} \cite{saranen2013periodic} imply the same results of Propositions \ref{prop:vscompact} and \ref{prop:biecompacteness} for $s$ in a range limited by $r$. 
%\footnote{For the weakly-singular operator following these results the continuity holds for $|s| < r - \frac{1}{2}$, which is more restricted that the result showed in \cite{APJ:2019}[Theorem 4.10], and hence the presented procedure is not the best best suited for low regularity cases.}
\end{rmrk}

\begin{rmrk}
As previously mentioned, we have limited ourselves to extend the classical mapping results of the boundary integral operators to the context of quasi-periodic spaces. For the classical result see, for example, \cite[Theorem 2.1]{nystromconvergence}. 
\end{rmrk}

\subsection{Boundary Integral Formulation}
We recall the notation and geometry configuration introduced in Section \ref{sec:wavepropagationvolume}, that is: 
\begin{enumerate} 
\item 
$u^{(\text{inc})}$ denotes a plane incident wave with wavenumber $k_0$, which is assumed to be quasi-periodic with shift $\theta \in [0,1)$. 
\item 
$\{\Gamma_i\}_{i=1}^M$ denotes a set of $M\in\IN$ non-intersecting $\mathcal{C}^{r,1}$-Jordan curves, with $r \in [1,\infty]$, ordered {downwards}.
\item 
$\{ \Omega_i\}_{i=0}^M$ denotes a set of $M+1$ open domains, ordered downwards with boundaries 
\begin{align*}
\partial^\cG \Omega_0 = \Gamma_1,\qquad
\partial^\cG \Omega_i  = \Gamma_{i} \cup \Gamma_{i+1} \quad \forall\; i\in\{1,\hdots,M-1\}, \qquad
\partial^\cG \Omega_M = \Gamma_M.
\end{align*}
\item $\{\eta\}_{i=1}^M$ denotes a parameter set such that the wavenumber in $\Omega_i$ is given by $k_i = \eta_i k_0$ for $i\in\{1,\hdots,M\}$. 
\end{enumerate}

\begin{assumption}
\label{ass:bieassump} For the given shift, $\theta$, the wavenumber $k_0$ and the parameters $\{\eta_i\}_{i=1}^{M}$
are such that neither $k_0$ nor the wavenumbers $k_i=\eta_ik_0$
are Rayleigh-Wood frequencies.
\end{assumption}

Following the notation of Problem \ref{prob:VolProb}, the scattered field---defined as the total field $u^{(\text{tot})}$ minus
the incident field $u^{(\text{inc})}$---is written as
\begin{align*}
u^{\text{(sc)}}:=%\begin{cases}
u_i\quad\text{in }\Omega_i,\;\text{for }i\in\{0,\hdots, M\}.
%\end{cases}.
\end{align*}
Under Assumption \ref{ass:bieassump}, we make the following representation \emph{Ansatz} for the scattered field:
\begin{align*}
u^{\text{(sc)}}=\begin{cases}
\mathsf{SL}^{k_0}_{\theta,\Gamma_1}(\mu_1) - \mathsf{DL}^{k_0}_{\theta,\Gamma_1}(\lambda_1) &\text{in }\Omega_0,\\
\begin{aligned}
&\mathsf{SL}^{k_i}_{\theta,\Gamma_i}(\mu_i) - \mathsf{DL}^{k_i}_{\theta,\Gamma_i}(\lambda_i)+\\ &\mathsf{SL}^{k_i}_{\theta,\Gamma_{i+1}}(\mu_{i+1})-\mathsf{DL}^{k_i}_{\theta,\Gamma_{i+1}}(\lambda_{i+1})
\end{aligned} &\text{in }\Omega_i,\;\text{for }i\in\{1,\hdots,M-1\}\\
\mathsf{SL}^{k_M}_{\theta,\Gamma_m}(\mu_m) - \mathsf{DL}^{k_M}_{\theta,\Gamma_m}(\lambda_m) &\text{in }\Omega_m,
\end{cases},
\end{align*}
where, for each $i\in\{1,\hdots,M\}$, the boundary data $\lambda_i$ and $\mu_i$ are assumed to belong to $H^s_\theta(\Gamma_i)$ for some possibly different values of $s\in\IR$, i.e., $s$ may be different for each boundary datum. $\mathsf{SL}^{k_{j}}_{\theta,\Gamma_i}$ and $\mathsf{DL}^{k_{j}}_{\theta,\Gamma_i}$ are, respectively, the
single and double layer potentials of wavenumber $k_{j}$ on $\Gamma_i$.

As shorthand, in what follows, we denote, for each $i\in\{1,\hdots, M\}$,
\begin{gather*}
\Lambda_i  := (\lambda_i, \mu_i)^t,\quad
\boldsymbol{\bm{\mathsf{L}}}_{\theta,\Gamma_i}^{k} \Lambda_i := \mathsf{SL}^{k}_{\theta,\Gamma_i}(\mu_i) - \mathsf{DL}_{\theta,\Gamma_i}^{k}(\lambda_i),
\end{gather*} 
where $\lambda_i$ and $\mu_i$ are defined over $\Gamma_i$.
For  $s_1,s_2 \in \IR$, we define the Cartesian product spaces:
\begin{align*}
{\cV}_{\theta,\Gamma_i}^{{s_1,s_2}} := H^{{s_1}}_\theta(\Gamma_i) \times
H^{{s_2}}_\theta(\Gamma_i)\quad \text{for } i =0,\hdots,M\qquad\mbox{and}\qquad
\bm{\cV}_\theta^{s_1,s_2} := \prod\limits_{i=1}^M \mathcal{V}_{\theta,\Gamma_i}^{s_1,s_2},
\end{align*}
where all of these spaces are equipped with their natural graph inner products. For each $i\in\{1,\hdots,M\}$ let us define the following operators:
\begin{equation}\label{eq:self:interaction}
\mathsf{A}_i\Lambda_i :=
 \begin{pmatrix}
 -\mathsf{K}^{k_{i-1},k_i}_{\theta,\Gamma_i}(\lambda_i) +  \mathsf{V}^{k_{i-1},k_i}_{\theta,\Gamma_i}(\mu_i) \\
  \mathsf{W}^{k_{i-1},k_i}_{\theta,\Gamma_i}(\lambda_i) +  {\mathsf{K}'}^{k_{i-1},k_i}_{\theta,\Gamma_i}(\mu_i)
 \end{pmatrix},
\end{equation}
{corresponding to self-interactions between the potentials defined
over each $\Gamma_i$ with themselves. Analogously}, for $i$, $j\in\{1,\hdots,M\}$, we define the following operators:
\begin{equation}\label{eq:cross:interaction}
 \mathsf{B}_{i,j}\Lambda_j := \begin{cases}
 \begin{pmatrix}
 -\tD ^i \mathsf{DL}^{k_{\min\{i,j\}}}_{\theta,\Gamma_{j}}(\lambda_j) + \tD ^i \mathsf{SL}^{k_{\min\{i,j\}}}_{\theta,\Gamma_{j}}(\mu_j) \\
-\tN ^i \mathsf{DL}^{k_{\min\{i,j\}}}_{\theta,\Gamma_{j}}(\lambda_j) + \tN ^i \mathsf{SL}^{k_{\min\{i,j\}}}_{\theta,\Gamma_{j}}(\mu_j) \end{pmatrix} & \text{if} \ \modulo{i-j}=1\\
\qquad\qquad\qquad\qquad\boldsymbol{0} & \text{a.o.c.} %\mathsf{B}_{{j+1},j}(\lambda, \mu)^t :=
% \begin{pmatrix}
% -\tD ^{j+1} \mathsf{DL}^{k_j}_{\theta,\Gamma_j}(\lambda) + \tD ^{j+1} \mathsf{SL}^{k_j}_{\theta,\Gamma_j}(\mu) \\
%-\tN ^{j+1} \mathsf{DL}^{k_j}_{\theta,\Gamma_j}(\lambda) + \tN ^{j+1} \mathsf{SL}^{k_j}_{\theta,\Gamma_j}(\mu) \end{pmatrix}.
 \end{cases}
\end{equation}
corresponding to interactions between potentials defined over $\Gamma_i$ with those defined over $\Gamma_j$. 
%Notice that these are not zero only if $\vert i-j \vert$ are neighbors \todo{definition?}.
\begin{prpstn}
\label{prop:compactops}
Let Assumption \ref{ass:bieassump} hold and let interfaces $\{\Gamma_i\}_{i=1}^{M}$ be of class $\mathcal{C}^\infty$. Then, the self-interaction operators defined in \eqref{eq:self:interaction} 
$$\mathsf{A}_i : \cV_{\theta,\Gamma_i}^{s_1,s_2} \rightarrow \cV_{\theta,\Gamma_i}^{s_1,s_2}$$ are compact operators for any $s_1$, $s_2\in\IR$ with  $s_2<s_1<s_2+2$. Furthermore, the cross-interaction operators \eqref{eq:cross:interaction}
\begin{align*} 
\mathsf{B}_{i,j} :&  \cV_{\theta,\Gamma_j}^{s_1,s_2} \rightarrow \cV_{\theta,\Gamma_i}^{s_1,s_2},  
\end{align*}
are compact for any choice of $s_1$, $s_2\in\IR$.
\end{prpstn}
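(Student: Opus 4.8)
The plan is to exploit the block structure of both families of operators and reduce everything to the single-interface compactness results already in hand. A matrix of operators acting between product spaces is compact exactly when each of its entries is compact, since the inclusions $\iota_i$ and projections $\pi_j$ onto the Cartesian factors are continuous: on the one hand $T_{ij}=\pi_i\circ T\circ\iota_j$ is compact whenever $T$ is, and on the other $T=\sum_{i,j}\iota_i\,T_{ij}\,\pi_j$ is a finite sum of compact operators whenever every $T_{ij}$ is. Hence it suffices to treat the four scalar entries of $\mathsf{A}_i$ and the (at most four) scalar entries of each $\mathsf{B}_{i,j}$ one at a time.

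For the self-interaction operator $\mathsf{A}_i$ I would read off its entries as the wavenumber differences $\mathsf{K}^{k_{i-1},k_i}_{\theta,\Gamma_i}$, $\mathsf{V}^{k_{i-1},k_i}_{\theta,\Gamma_i}$, $\mathsf{W}^{k_{i-1},k_i}_{\theta,\Gamma_i}$ and ${\mathsf{K}'}^{k_{i-1},k_i}_{\theta,\Gamma_i}$, each of which falls under Proposition \ref{prop:biecompacteness} because Assumption \ref{ass:bieassump} forces both $k_{i-1}$ and $k_i$ to avoid Rayleigh-Wood frequencies. The key step is to track the Sobolev indices through each entry and then compose with the Rellich embedding of Theorem \ref{trm:Rellich} to return to the correct target factor. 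Concretely, the top-right entry $\mathsf{V}^{k_{i-1},k_i}_{\theta,\Gamma_i}$ sends $H^{s_2}_\theta(\Gamma_i)$ compactly into $H^{s_2+3-\epsilon}_\theta(\Gamma_i)$, which embeds continuously into $H^{s_1}_\theta(\Gamma_i)$ as soon as $s_1<s_2+3-\epsilon$; the bottom-left entry $\mathsf{W}^{k_{i-1},k_i}_{\theta,\Gamma_i}$ smooths by only one order, sending $H^{s_1}_\theta(\Gamma_i)$ compactly into $H^{s_1+1-\epsilon}_\theta(\Gamma_i)$, which embeds into $H^{s_2}_\theta(\Gamma_i)$ provided $s_2<s_1+1-\epsilon$; the two diagonal entries $\mathsf{K}^{k_{i-1},k_i}_{\theta,\Gamma_i}$ and ${\mathsf{K}'}^{k_{i-1},k_i}_{\theta,\Gamma_i}$ gain three orders and therefore embed into their respective factors unconditionally. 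Taking $\epsilon\in(0,1)$ small, the hypothesis $s_2<s_1<s_2+2$ renders all four embeddings valid, so each entry is a compact map followed by a bounded inclusion, hence compact, and $\mathsf{A}_i$ is compact by the block argument above.

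For the cross-interaction operators I would split on $\modulo{i-j}$. When $\modulo{i-j}\neq 1$ the operator $\mathsf{B}_{i,j}$ is identically $\boldsymbol{0}$ and thus trivially compact. When $\modulo{i-j}=1$, each entry is a Dirichlet or Neumann trace over $\Gamma_i$ of a single- or double-layer potential whose density is supported on the disjoint curve $\Gamma_j$, with wavenumber $k_{\min\{i,j\}}$ satisfying Assumption \ref{ass:woodgeneral}; this is precisely the setting of Proposition \ref{prop:crosscompacteness}, which delivers compactness for arbitrary indices $s_1,s_2\in\IR$ and regardless of the approach direction of the traces. Applying it to each of the four entries yields compactness of $\mathsf{B}_{i,j}$ for all $s_1,s_2\in\IR$.

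The only genuine obstacle is the index bookkeeping in the self-interaction block: because the hypersingular difference $\mathsf{W}^{k_{i-1},k_i}_{\theta,\Gamma_i}$ regularizes by just one order rather than three, it is the off-diagonal entries, not the diagonal ones, that impose the binding constraints, and it is exactly their interplay that cuts out the admissible window $s_2<s_1<s_2+2$ (which comfortably contains the physical Cauchy-data choice $s_1=\half$, $s_2=-\half$). Everything else is a routine assembly of compact-followed-by-continuous compositions together with finite sums of compact operators.
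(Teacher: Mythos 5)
Your proposal is correct and follows essentially the same route as the paper, whose proof simply cites Proposition \ref{prop:biecompacteness} for the $\mathsf{A}_i$ blocks and Proposition \ref{prop:crosscompacteness} for the $\mathsf{B}_{i,j}$ blocks; your entry-wise block reduction and Sobolev-index bookkeeping is exactly the detail left implicit there. One small overstatement: the window $s_2<s_1<s_2+2$ is not ``exactly'' cut out by your estimates, since the binding constraints you derive (from $\mathsf{W}^{k_{i-1},k_i}_{\theta,\Gamma_i}$ and $\mathsf{V}^{k_{i-1},k_i}_{\theta,\Gamma_i}$) actually admit the wider range $s_2-1<s_1<s_2+3$, so the stated hypothesis is merely a sufficient subrange and your argument remains valid.
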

\begin{proof}
The first result is directly found using Proposition \ref{prop:biecompacteness}, whereas the second one {follows from} Proposition \ref{prop:crosscompacteness}.
\end{proof}

With the above definitions and using the jump properties of the BIOs, it holds that
\begin{equation}
\left[\gamma u^{\text{(sc)}}\right]_{\Gamma_i}=\mathsf{B}_{i,i-1}\Lambda_{i-1}  + (\mathsf{A}_i-\Id_i)\Lambda_i  -\mathsf{B}_{i,i+1} \Lambda_{i+1}, 
\end{equation}
{for each $i\in\{1,\hdots,M\}$}, where $\Id_i$ corresponds to the identity map over $ \cV_{\theta,\Gamma_j}^{s_1,s_2}$, with $s_1$, $s_2 \in \IR$. {We now introduce the following} operator matrix over $\bm{\cV}_\theta^{{s}_1,{s}_2}$,
\begin{equation}
\label{eq:systemjumps}
\bm{\bm{\mathcal{M}}}:=\begin{pmatrix}
\mathsf{A}_1 -\Id_1 & - \mathsf{B}_{1,2}& 0 & 0 & 0 &\hdots & 0 \\
\mathsf{B}_{2,1} & \mathsf{A}_2-\Id_2& -\mathsf{B}_{2,3}& 0 & 0 & \hdots& 0\\
\vdots  & \vdots &\vdots & \vdots & \vdots& \vdots&\vdots \\ 
0 & 0& \hdots & 0  & \mathsf{B}_{M-1,M-2}&\mathsf{A}_{M-1}-\Id_{M-1} & \mathsf{B}_{M-1,M} \\
0 & 0& \hdots &0 & 0 & \mathsf{B}_{M,M-1}&\mathsf{A}_M-\Id_M
\end{pmatrix}.
\end{equation}
Imposing the boundary conditions of Problem \ref{prob:VolProb} to $u^{\text{(sc)}}$ leads {to the following} system of BIEs.
\begin{prblm}\label{prob:BIEsystem}
Let  Assumption \ref{ass:bieassump} hold and let $s \in \IR$. Define ${s}_1:=s+\half$ and ${s}_2:=s-\half$. We seek $\bm{\Lambda}\in\bm{\cV}^{{s}_1,{s}_2}_{\theta}$ such that
\begin{align*}
\bm{\bm{\mathcal{M}}}\bm{\Lambda}= 
\begin{pmatrix}
-{\bm{\gamma}^{e,1}} u^{\text{(inc)}}\\
0\\
\vdots \\
0
\end{pmatrix}  
\end{align*}
where $\bm{\mathcal{M}}$ corresponds to the operator matrix in \eqref{eq:systemjumps} and $\bm{\gamma}^{e,1}$ corresponds to the exterior trace vector on $\Gamma_1$.
\end{prblm}

In order to ensure the well-posedness of Problem \ref{prob:BIEsystem}, we introduce the following set of auxiliary problems.

\begin{prblm}[Auxiliary problems]
\label{prob:auxprob}
We seek $\{v_i\}_{i=1}^M$ such that $v_i\in H^1_{\theta,\text{loc}}(\cG\setminus\Gamma_i)$
\begin{align}
\begin{aligned}
&-(\Delta +k_{i}^2)v_i (\bm{x}) = 0 \quad \text{in } \left(\Omega_{i-1} \cup \bigcup_{j=0}^{i-2} \overline{\Omega_j}^\mathcal{G} \right) {\cap\{\bx\in\cG\; :\;|x_2| < H\}} ,  \\ 
&-(\Delta +k_{i-1}^2)v_i (\bm{x}) = 0\quad \text{in }\Omega_{i} \cup \left( \bigcup_{j=i+1}^{M} \overline{\Omega_j}^\mathcal{G} \right){\cap\{\bx\in\cG\; :\;|x_2| < H\}},\\
&[\bm{\gamma} v_i]_{\Gamma_i}=0\quad\text{on }\Gamma_i\ ,\\
&v_i(\bx)=\sum\limits_{j\in\IZ}v_{j}^{(i)}e^{\imath\left(\beta^{(0)}_j(x_2-H)+j_\theta x_1\right)}\quad\text{{for} all }x_2\geq H,\\
&v_i(\bx)=\sum\limits_{j\in\IZ}v_{j}^{(i)}e^{\imath\left(\beta^{(M)}_j(x_2+H)+j_\theta x_1\right)}\quad\text{{for} all }x_2\leq -H,
\end{aligned}
\end{align}
for each $i\in\{1,\hdots,M\}$, where $H>0$ is as in Section \ref{sec:geo}, and $\{k_i\}_{i=0}^{M}$ are the wavenumbers in each $\{\Omega_i\}_{i=0}^{M}$, as introduced in Section \ref{sec:wavepropagationvolume}.
\end{prblm}
 By the same analysis {as that} presented in \cite[Section 3.4]{Starling:1994}, each {interface $\Gamma_i$}, $i\in\{1,\hdots,M\}$, potentially adds a countable set of
wavenumbers, $k_0$, such that Problem \ref{prob:auxprob} is unsolvable.
{This justifies the} following Assumption {(recall $k_i=\eta_ik_0$ for all $i\in\{1,\hdots,M\}$)}.

\begin{assumption}\label{ass:auxk0}
Given $\{\eta_i\}_{i=1}^{M}$, the wavenumber $k_0$ is such that the auxiliary Problem \ref{prob:auxprob} has only one solution $\{v_i\}_{i=1}^M$ given by $v_i:= 0$ for all $i\in\{1,\hdots,M\}$.
\end{assumption}

Assumption \ref{ass:auxk0} will force us to discard yet more wavenumbers,
but the set of wavenumbers neglected by Assumptions \ref{ass:k0}
and \ref{ass:auxk0} is still countable.

\begin{thrm}
\label{thm:welposed}
Let the parameters $k_0$ and $\{\eta_i\}_{i=1}^M$ satisfy
Assumption \ref{ass:bieassump} and let the interfaces $\{\Gamma_i\}_{i=1}^{M}$ be $\mathcal{C}^\infty$ periodic Jordan arcs. Further assume Assumptions \ref{ass:k0} and \ref{ass:auxk0} to be satisfied. Then,
Problem \ref{prob:BIEsystem} is well posed for any $s \in \IR$. 
\end{thrm}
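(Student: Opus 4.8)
The plan is to recognize $\bm{\mathcal{M}}$ as a compact perturbation of $-\Id$ and then reduce well-posedness to injectivity via the Fredholm alternative, establishing injectivity in two stages that use Assumptions~\ref{ass:k0} and~\ref{ass:auxk0} respectively.

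First I would fix $s\in\IR$, set $s_1=s+\half$ and $s_2=s-\half$, and read off from \eqref{eq:systemjumps} that on $\bm{\cV}^{s_1,s_2}_\theta$ the diagonal blocks of $\bm{\mathcal{M}}$ are $\mathsf{A}_i-\Id_i$ and the off-diagonal blocks are $\pm\mathsf{B}_{i,j}$. Since $s_2<s_1<s_2+2$ (indeed $s_1-s_2=1$), Proposition~\ref{prop:compactops} applies and every $\mathsf{A}_i$ and every $\mathsf{B}_{i,j}$ is compact on the relevant spaces. Collecting them into a single operator $\mathsf{K}$, I can write $\bm{\mathcal{M}}=-\Id+\mathsf{K}$ with $\mathsf{K}$ compact on $\bm{\cV}^{s_1,s_2}_\theta$. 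By Riesz--Schauder theory $\bm{\mathcal{M}}$ is then Fredholm of index zero, hence boundedly invertible if and only if it is injective, and it suffices to show $\ker\bm{\mathcal{M}}=\{0\}$.

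Before proving injectivity I would note that the kernel is independent of $s$: any $\bm{\Lambda}\in\ker\bm{\mathcal{M}}$ satisfies $\bm{\Lambda}=\mathsf{K}\bm{\Lambda}$, and since each constituent of $\mathsf{K}$ strictly increases Sobolev regularity (the self-blocks gain nearly two orders by Proposition~\ref{prop:biecompacteness}, the cross-blocks gain arbitrarily many by Proposition~\ref{prop:crosscompacteness}), a bootstrap shows $\bm{\Lambda}$ is smooth, so it lies in $\bm{\cV}^{t_1,t_2}_\theta$ for all $t_1,t_2$. Hence it is enough to rule out smooth nontrivial kernel elements, after which the conclusion holds for every $s$. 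Fixing such a $\bm{\Lambda}$ and letting $u^{(\text{sc})}$ be the field it generates through the representation \emph{Ansatz}, the identity $[\bm{\gamma}u^{(\text{sc})}]_{\Gamma_i}=\mathsf{B}_{i,i-1}\Lambda_{i-1}+(\mathsf{A}_i-\Id_i)\Lambda_i-\mathsf{B}_{i,i+1}\Lambda_{i+1}$ shows that all Cauchy jumps vanish; combined with the Helmholtz equations satisfied by the layer potentials in each $\Omega_i$ and the outgoing Rayleigh behaviour of the quasi-periodic potentials as $|x_2|\to\infty$, this exhibits $u^{(\text{sc})}$ as a solution of the homogeneous Problem~\ref{prob:VolProb}, so Assumption~\ref{ass:k0} gives $u^{(\text{sc})}\equiv0$ in every physical layer $\Omega_i$.

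The second and harder stage is to deduce $\bm{\Lambda}=0$ from the vanishing of $u^{(\text{sc})}$, and this is where Assumption~\ref{ass:auxk0} enters. The potentials carrying $\Lambda_i$ also define fields on the non-physical side of each $\Gamma_i$; using $u^{(\text{sc})}=0$ to pin down the Cauchy data inherited from the physical side, I would assemble from these continuations, for each $i$, a field $v_i$ on $\cG\setminus\Gamma_i$ with the swapped wavenumber assignment of Problem~\ref{prob:auxprob}, verify that its transmission condition across $\Gamma_i$ and its radiation conditions at $x_2\to\pm\infty$ are exactly those of that problem, and invoke Assumption~\ref{ass:auxk0} to conclude $v_i\equiv0$. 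With both the physical field and the complementary field $v_i$ vanishing on the two sides of $\Gamma_i$, the jump identity $[\bm{\gamma}\,\boldsymbol{\mathsf{L}}^{k}_{\theta,\Gamma_i}\Lambda_i]_{\Gamma_i}=\Lambda_i$ forces $\Lambda_i=0$ for every $i$, hence $\bm{\Lambda}=0$. The main obstacle is precisely this construction --- disentangling the cross-interface contributions $\boldsymbol{\mathsf{L}}^{k_i}_{\theta,\Gamma_{i\pm1}}$ so that the continuations genuinely satisfy the single-interface transmission and radiation conditions of Problem~\ref{prob:auxprob} --- rather than the Fredholm reduction, which is routine.
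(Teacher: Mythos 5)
Your overall architecture coincides with the paper's proof: the Fredholm reduction via Proposition \ref{prop:compactops}, injectivity in two stages driven by Assumptions \ref{ass:k0} and \ref{ass:auxk0}, and a final appeal to the jump identity to force $\bm{\Lambda}=0$. The first stage is executed correctly (your bootstrap showing the kernel is independent of $s$ is a harmless supplement the paper does not bother with). But the second stage --- which you yourself flag as the main obstacle --- is exactly where your write-up stops being a proof: you assert that fields $v_i$ ``would be assembled'' from the continuations with the swapped wavenumber assignment, without ever defining them, and you frame the difficulty as ``disentangling the cross-interface contributions $\bm{\mathsf{L}}^{k_i}_{\theta,\Gamma_{i\pm1}}$.'' That framing is a misdiagnosis: no disentangling is needed, and deferring the construction means the actual content of the paper's proof is missing from yours.

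The paper's construction keeps the \emph{full} per-layer representations $\widetilde{u}_i := \bm{\mathsf{L}}^{k_i}_{\theta,\Gamma_i}\Lambda_i + \bm{\mathsf{L}}^{k_i}_{\theta,\Gamma_{i+1}}\Lambda_{i+1}$, evaluated globally on $\cG$ off the two curves, and sets $v_i := \widetilde{u}_i$ on the region above $\Gamma_i$ and $v_i := -\widetilde{u}_{i-1}$ on the region below; the sign flip is the one nontrivial idea. The cross-interface potentials carried along are smooth across $\Gamma_i$ and so contribute nothing to the jumps there, and every quasi-periodic layer potential radiates at $x_2\to\pm\infty$ regardless of which curve carries its density, so the radiation conditions of Problem \ref{prob:auxprob} hold for the combination automatically --- nothing has to be separated. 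The wavenumber assignment is also automatic, since $\widetilde{u}_i$ solves the $k_i$-Helmholtz equation everywhere off $\Gamma_i\cup\Gamma_{i+1}$, in particular above $\Gamma_i$. Note also that the transmission condition $[\bm{\gamma}v_i]_{\Gamma_i}=0$ is \emph{not} verified directly from the potentials, as your plan suggests; instead the jump relations yield the two identities $\gamma^{i,e}v_i-\gamma^{i}\widetilde{u}=\Lambda_i$ and $\gamma^{i,e}\widetilde{u}+\gamma^{i}v_i=\Lambda_i$ of \eqref{eq:auxjumps}, whose difference, combined with $\widetilde{u}\equiv 0$ from stage one, gives the vanishing jump; Assumption \ref{ass:auxk0} then gives $v_i\equiv 0$, and the first identity (not the raw jump of a single potential) delivers $\Lambda_i=0$. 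So your outline is the right one, but the step you leave open is the crux, and it closes in one line once the piecewise definition with the sign flip is written down.
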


\begin{proof}
Note that the operator matrix $\bm{\mathcal{M}}$ may be written as
\begin{align*}\scriptstyle{
\bm{\mathcal{M}}=\begin{pmatrix}
\mathsf{A}_1 & - \mathsf{B}_{1,2}& 0 & 0 & 0 &\hdots & 0 \\
\mathsf{B}_{2,1} & \mathsf{A}_2& -\mathsf{B}_{2,3}& 0 & 0 & \hdots& 0\\
\vdots  & \vdots &\vdots & \vdots & \vdots& \vdots&\vdots \\ 
0 & 0& \hdots & 0  & \mathsf{B}_{M-1,M-2}&\mathsf{A}_{M-1}& \mathsf{B}_{M-1,M} \\
0 & 0& \hdots &0 & 0 & \mathsf{B}_{M,M-1}&\mathsf{A}_M
\end{pmatrix}-
\begin{pmatrix}
\Id_1 & 0 & 0 & 0 &\hdots & 0 \\
0 & \Id_2& 0 & 0 & \hdots& 0\\
\vdots  & \vdots & \vdots & \vdots& \vdots&\vdots \\ 
0 & 0& \hdots & 0 &\Id_{M-1}&0 \\
0 & 0& \hdots & 0 &0&\Id_M
\end{pmatrix}.}
\end{align*}
Then, by the Fredhom alternative, we need only show
uniqueness of Problem \ref{prob:BIEsystem}, as the above tridiagonal block is compact by Proposition \ref{prop:compactops}. The
proof is very similar to that for the classical scattering {problem} of a bounded object in free space
(cf.~\cite[Theorem 3.41]{ColtonKress2013}). 

Let $\bm{\Lambda} \in {\bm{\cV}^{{s}_1,{s}_2}_{\theta}}$, {with $s_1=s+\half$ and $s_2=s-\half$}, be such that $\bm{\bm{\mathcal{M}}}\bm{\Lambda} = 0$.
We define 
\begin{alignat*}{2}
\widetilde{u}_0(\bx) &:= \left(\bm{\mathsf{L}}^{k_0}_{\theta,\Gamma_1}(\Lambda_1)\right)(\bx)\quad &&\forall\ \bx\in\cG\setminus\Gamma_1, \\
\widetilde{u}_i(\bm{x}) &:= \left(\bm{\mathsf{L}}^{k_i}_{\theta,\Gamma_i}(\Lambda_i)\right)(\bx) + 
\left(\bm{\mathsf{L}}^{k_i}_{\theta,\Gamma_i}(\Lambda_{i+1})\right)(\bx)
\quad 
&&\forall\ \bx\in\cG\setminus(\Gamma_i\cup\Gamma_{i+1}),\ \forall\ i\in\{1,\hdots,M-1\},
\\
\widetilde{u}_m(\bm{x}) &:= \left(\bm{\mathsf{L}}^{k_M}_{\theta,\Gamma_m}(\Lambda_m)\right)(\bx)\quad &&\forall\ \bx\in\cG\setminus\Gamma_m,
\end{alignat*}
and further {define}
\begin{align*}
\widetilde{u}(\bm{x}) := \widetilde{u}_i(\bm{x}) \quad \forall\ \bx\in \Omega_i,\quad\forall\ i \in \{0,\hdots,M\},
\end{align*}
which is well defined in each $\Omega_i$, but could potentially
have non-zero jumps across each interface $\Gamma_i$.
Moreover, $\widetilde{u}$ solves the Helmholtz equation with wavenumber $k_i$
in each $\Omega_i$ and satisfies {the appropriate radiation} conditions at infinity \cite[Section 4]{APJ:2019}. Hence, $\widetilde{u}$ solves Problem \ref{prob:VolProb}, and
Assumption \ref{ass:k0} implies $\widetilde{u} \equiv 0$. We continue by defining
the following auxiliary functions
\begin{align*}
v_i (\bm{x}) := \begin{cases} 
\widetilde{u}_i(\bm{x})
 \quad &\forall \bm{x} \in \Omega_{i-1} \cup \left( \bigcup\limits_{j=0}^{i-2} \overline{\Omega_j}^\mathcal{G} \right) \\
-\widetilde{u}_{i-1}(\bm{x}) 
\quad &\forall \bm{x} \in \Omega_{i} \cup \left( \bigcup\limits_{j=i+1}^{M} \overline{\Omega_j}^\mathcal{G} \right) 
\end{cases},\quad\forall\ i\in\{1,\hdots,M\}.
\end{align*}
 It is clear from this definition that 
\begin{align*}
(-\Delta -k_{i}^2)v_i (\bm{x}) = 0, \quad \text{in } \Omega_{i-1} \cup \left( \bigcup_{j=0}^{i-2} \overline{\Omega_j}^\mathcal{G} \right), \\ 
(-\Delta -k_{i-1}^2)v_i (\bm{x}) = 0,\quad \text{in }\Omega_{i} \cup \left( \bigcup_{j=i+1}^{M} \overline{\Omega_j}^\mathcal{G} \right). 
\end{align*}
Furthermore, each $v_i$ satisfies the appropriate radiation conditions at infinity.
Using the jump relationships of BIOs  (see \cite[Lemma 4.11]{APJ:2019}), we have that 
\begin{align}
\label{eq:auxjumps}
\begin{aligned}
\gamma^{i,e}v_i - \gamma^i \widetilde{u}  = \Lambda_i, \quad
\gamma^{i,e} \widetilde{u} + \gamma^i v_i = \Lambda_i.
\end{aligned}
\end{align}
Since $\widetilde{u} \equiv 0$, we have that 
\begin{align*}
 [\gamma v_i]_{\Gamma_i} = \gamma^{i,e}v_i - \gamma^i v_i = 
\gamma^{i,e}v_i -\gamma^i\widetilde{u} - (\gamma^{i,e}\widetilde{u}+\gamma^i v_i) =0,
\end{align*} 
from where it follows that $\{v_i\}_{i=1}^M$ solves Problem \ref{prob:auxprob}.
Assumption \ref{ass:auxk0} implies that
$v_i \equiv 0$, for all $i$ in $\{1,\hdots,M\}$. Finally,
 \eqref{eq:auxjumps} implies $\bm{\Lambda} \equiv0$ as stated.
\end{proof}

\begin{rmrk}
\label{rem:limitedreg2}
Theorem \ref{thm:welposed} {states} that if all the interfaces are of arbitrary smoothness, the solution $\bm\Lambda$ is also arbitrarily smooth. {This result can be generalized to geometries of} limited regularity by following the ideas presented in Remark \ref{rem:limitedreg}, obtaining a solution which is also of limited regularity.
\end{rmrk} 

%%%%%%%%%%%%%%%%%%%%%%%%%
\section{Spectral {Galerkin} Method} 
\label{sec:specmethod}
%%%%%%%%%%%%%%%%%%%%%%%%%
We now provide a numerical method to approximate
solutions of Problem \ref{prob:BIEsystem}, along with its
corresponding error estimates. We restrict ourselves to cases where the
interfaces $\{\Gamma_i\}_{i=1}^{M}$ are $\mathcal{C}^\infty$-Jordan curves. By Theorem \ref{thm:welposed}, the solution is of {arbitrary smoothness, and a spectral method should converge at a super-algebraic rate}
(cf.~\cite[Chapter 9]{saranen2013periodic} and \cite{HU1995340,PaperB}).

%%%%%%%%%%%%%%%%%%%%%%%%
\subsection{Discrete Spaces}
%%%%%%%%%%%%%%%%%%%%%%%%
Let us define a suitable family of finite dimensional 
subspaces of $\bm{\cV}_{\theta}^{{s}_1,{s}_2} $. From the definition of quasi-periodic
Sobolev spaces, it is natural to consider the following finite dimensional functional spaces over $(0,2\pi)$
\begin{align*}
\widehat{\cE}^N_\theta:= \text{span} \{ \widehat{e}_\theta^{n}(t):=e^{ \imath (n+\theta) t }\; : \; n\in\{-N,\hdots,N\}\}.
\end{align*}
It is clear that $\widehat{\cE}^N\subset\widehat{\cE}^{N+1}$ for all $N\in\IN$ and that $\bigcup_{N\in\IN}\widehat{\cE}^N$ is
dense in $H^s_\theta[0,2\pi]$ for any $s \in \IR$. Denoting
$\bz_i:(0,2\pi) \rightarrow \Gamma_i$ a parametrization of $\Gamma_i$, we define
\begin{gather}
\widetilde{\cE}^N_{\theta,\Gamma_i}:= \text{span} \{ \widetilde{e}^n_{\theta,i}:=\widehat{e}_{\theta}^{n}\circ \bz_i^{-1},\; : \; n\in\{-N,\hdots,N\}\}, \\
{\E^N_{\theta,\Gamma_i}}:= \text{span} \{ {e}^n_{\theta,i}:=\norm{\dot{\bz}_i \circ \bz_i^{-1}}{\IR^2}^{-1}\widetilde{e}^n_{\theta,i}\; : \; n\in\{-N,\hdots,N\}\}.
\end{gather}
We can see that $\widetilde{\cE}^N_{\theta,\Gamma_i}$ is the space spanned by finite Fourier basis parametrized on $\Gamma_i$ and that ${\cE}^N_{\theta,\Gamma_i}$ is constructed from the previous space by dividing the basis by the norm of the tangential vector of the corresponding interface.
As before, it is clear that both $\bigcup_{N\in\IN}\cE^N_{\theta,\Gamma_i}$ and $\bigcup_{N\in\IN}\widetilde{\cE}^N_{\theta,\Gamma_i}$ are dense subspaces of $H^s_\theta(\Gamma_i)$ for $s\in \IR$.
Finally, we define the Cartesian product of discrete spaces
\begin{align*}
\bm{\cE}^N_{\theta,\Gamma_i} :=\widetilde{\cE}^N_{\theta,\Gamma_i} \times \cE^N_{\theta,\Gamma_i},
\end{align*}
whose infinite union on $N$ forms a dense subspace of
$\cV^{s_1,s_2}_{\theta,\Gamma_i}$ for any pair $s_1$, $s_2\in\IR$.
%%%%% %%%%%%%%%%%%%%%
\subsection{Discrete Problem}
%%%%% %%%%%%%%%%%%%%%
We now consider the Galerkin discretization of Problem \ref{prob:BIEsystem} on the finite dimensional product space 
\begin{align*}
\mathbb{E}^{\bm{N}}_\theta := \prod_{i=1}^M \bm{\cE}^{N_i}_{\theta,\Gamma_i} \subset \bm{\cV}_{\theta}^{{s}_1,{s}_2} \quad\text{for }\bm{N}=\{N_i\}_{i=1}^M\subset\IN, \quad {s}_1, {s}_2 \in \IR.
\end{align*}

\begin{prblm}[Discrete BIEs]
\label{prob:discBIE}
Let the parameters $k_0$ and $\{\eta_i\}_{i=1}^M$ satisfy
Assumption \ref{ass:bieassump} and let the interfaces $\{\Gamma_i\}_{i=1}^{M}$ be of class $\mathcal{C}^\infty$. For some $\bm{N}=\{N_i\}_{i=1}^M\subset\IN$, we seek $\bm\Lambda^{\bm N}\in \mathbb{E}^{\bm N}_\theta$ such that
\begin{align}
\label{eq:galerkin}
\p{\bm{\bm{\mathcal{M}}}\bm{\Lambda^N},\bm{\Xi}^{\bm N}}_{\bm{\Gamma}} = \p{\bm{\varrho},\bm{\Xi}^{\bm N}}_{\bm{\Gamma}}, \quad \forall\ \bm{\Xi}^{\bm N} \in  \mathbb{E}^{\bm{N}}_\theta,
\end{align}
where the duality product
\begin{align*}
\p{\bm{\Psi},\bm{\Xi}}_{\bm{\Gamma}}:=\sum\limits_{i=1}^{M}\p{\Psi_i,\Xi_i}_{\Gamma_i}\quad\forall\ \bm{\Psi},\ \bm{\Xi}\in \bm{\cV}_\theta^{{s}_1,{s}_2},
\end{align*}
 denotes the sum of two standard duality pairings in $H^\half_\theta(\Gamma_i)$
and $H^{-\half}_\theta(\Gamma_i)$, and $\bm{\varrho}$ accounts for the right-hand side of Problem \ref{prob:BIEsystem}.
\end{prblm}

Since this is a second-kind BIE, we can deduce a quasi-optimality approximation result for the Galerkin
discretization (cf.~\cite[Theorem 4.2.9]{Sauter:2011}), i.e.~there exists
${\bm{N}}^\star = \{{N}^\star_i\}_{i=1}^M$ such that for all ${\bm N}=\{N_i\}_{i=1}^M$ such that $N_i > {N}^\star_i$ for all $i\in\{1,\hdots,M\}$, it holds that 
\begin{align}
\label{eq:quasiopti}
 \norm{\bm{\Lambda} - \bm{\Lambda^N}}{\bm{\cV}^{{s}_1,{s}_2}_\theta} \lesssim 
\inf_{\bm{\Xi}^{\bm N} \in \mathbb{E}^{\bm{N}}_\theta} \norm{\bm{\Lambda} - \bm{\Xi}^{\bm N} }{\bm{\cV}^{{s}_1,{s}_2}_\theta}.
\end{align}
From \eqref{eq:quasiopti} we see that, in order to establish error convergence rates
for the discrete solution, we need to bound those of the
best approximation. From the definition of our discrete and continuous spaces, the problem of bounding the best approximation on $\bm{\cV}^{{s}_1,{s}_2}_\theta$ is equivalent to that of establishing bounds for the best approximation of an
element of $H^s[0,2\pi]$ when approximated by elements of
$\widehat{\cE}^N_{\widetilde\theta}$ with $\widetilde\theta=0$. This issue was already addressed, for example, in \cite[Theorem 8.2.1]{saranen2013periodic}. Specifically,
for any pair $r_1$, $r_2\in\IR$ with $r_2>r_1$ and $f\in H^{r_2}[0,2\pi]$, there holds
\begin{align}
\label{eq:err1}
\inf_{q \in \widehat{\cE}^N_{\widetilde{\theta}}}\|f -q\|_{H^{r_1}[0,2\pi]}  \lesssim  N^{r_1-r_2}\|f\|_{H^{r_2}[0,2\pi]}.
\end{align}

\begin{thrm}
\label{thm:estimate}
Let the parameters $k_0$ and $\{\eta_i\}_{i=1}^M$ satisfy
Assumption \ref{ass:bieassump} and let the interfaces $\{\Gamma_i\}_{i=1}^{M}$ be of class $\mathcal{C}^\infty$. Further assume Assumptions \ref{ass:k0} and \ref{ass:auxk0} to be satisfied and let $s \geq 0$, ${s}_1=s+\half$ and ${s}_2 = s -\half$.  Then, there exists
${\bm{N}}^\star = \{N^\star_i\}_{i=1}^M\subset\IN$ such that for any $\bm{N}= \{N_i\}_{i=1}^M\subset\IN$ with $N_i>N^\star_i$ for all $i\in\{1,\hdots,M\}$, it holds
\begin{align*}
 \norm{\bm{\Lambda} - \bm{\Lambda^N}}{\bm{\cV}^{\half,-\half}_\theta} \lesssim 
 \left({\max_{i\in\{1,\hdots,M\}}} N_i^{-s}\right)\norm{\bm \varrho}{\bm{\cV}^{{s}_1,{ s}_2}_\theta},
 \end{align*}
 where $\bm{\Lambda}$ and $\bm{\Lambda}^{\bm N}$ are the solutions to Problems
 \ref{prob:BIEsystem} and \ref{prob:discBIE}, respectively.
\end{thrm}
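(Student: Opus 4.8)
The plan is to combine the quasi-optimality estimate \eqref{eq:quasiopti} with the one-dimensional best-approximation bound \eqref{eq:err1}, and then to convert the resulting bound on $\norm{\bm\Lambda}{\bm\cV^{s_1,s_2}_\theta}$ into one on the data $\bm\varrho$ by invoking the well-posedness of Problem \ref{prob:BIEsystem} (Theorem \ref{thm:welposed}). First I would apply \eqref{eq:quasiopti} in the target norm $\bm\cV^{\half,-\half}_\theta$, so that
\begin{align*}
\norm{\bm{\Lambda} - \bm{\Lambda}^{\bm N}}{\bm{\cV}^{\half,-\half}_\theta} \lesssim \inf_{\bm{\Xi}^{\bm N} \in \mathbb{E}^{\bm{N}}_\theta} \norm{\bm{\Lambda} - \bm{\Xi}^{\bm N}}{\bm{\cV}^{\half,-\half}_\theta}.
\end{align*}
Since $\mathbb{E}^{\bm N}_\theta = \prod_{i=1}^M \bm\cE^{N_i}_{\theta,\Gamma_i}$ is a Cartesian product and each factor $\bm\cE^{N_i}_{\theta,\Gamma_i} = \widetilde\cE^{N_i}_{\theta,\Gamma_i}\times\cE^{N_i}_{\theta,\Gamma_i}$ separates the two densities, the infimum decouples into the sum over $i\in\{1,\hdots,M\}$ of the independent best approximations of $\lambda_i$ in $H^{\half}_\theta(\Gamma_i)$ by $\widetilde\cE^{N_i}_{\theta,\Gamma_i}$ and of $\mu_i$ in $H^{-\half}_\theta(\Gamma_i)$ by $\cE^{N_i}_{\theta,\Gamma_i}$.

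The key step is to reduce each of these boundary best-approximation problems to \eqref{eq:err1} on $[0,2\pi]$. For the first density $\lambda_i$, the very definition of $H^s_\theta(\Gamma_i)$ for $s\geq 0$ gives the isometry $\lambda_i\mapsto \lambda_i\circ\bz_i$ onto $H^s_\theta[0,2\pi]$, under which $\widetilde\cE^{N_i}_{\theta,\Gamma_i}$ is carried exactly onto $\widehat\cE^{N_i}_\theta$; for the second density $\mu_i$, the weighted definition of the negative-order norm gives the isometry $\mu_i\mapsto (\mu_i\circ\bz_i)\norm{\dot{\bz}_i}{\IR^2}$ onto $H^{-\half}_\theta[0,2\pi]$, and the normalization by $\norm{\dot{\bz}_i\circ\bz_i^{-1}}{\IR^2}^{-1}$ built into the basis of $\cE^{N_i}_{\theta,\Gamma_i}$ is precisely what carries that space onto $\widehat\cE^{N_i}_\theta$ as well. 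Composing with the isometry $\phi\mapsto e^{-\imath\theta t}\phi$, which identifies $H^r_\theta[0,2\pi]$ with $H^r[0,2\pi]$ and $\widehat\cE^{N_i}_\theta$ with $\widehat\cE^{N_i}_{\widetilde\theta}$ for $\widetilde\theta=0$, I would then invoke \eqref{eq:err1} with $(r_1,r_2)=(\half,s+\half)$ for $\lambda_i$ and $(r_1,r_2)=(-\half,s-\half)$ for $\mu_i$ — both admissible since $\bm\Lambda\in\bm\cV^{s_1,s_2}_\theta$ with $s_1=s+\half$, $s_2=s-\half$ by Theorem \ref{thm:welposed} — so that each best approximation is bounded by $N_i^{-s}$ times the corresponding $H^{s+\half}_\theta(\Gamma_i)$ or $H^{s-\half}_\theta(\Gamma_i)$ norm of the datum.

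Summing over $i$, pulling the factor $\max_i N_i^{-s}$ out of the sum, and recognizing the remaining sum as $\norm{\bm\Lambda}{\bm\cV^{s_1,s_2}_\theta}$, I would arrive at
\begin{align*}
\norm{\bm{\Lambda} - \bm{\Lambda}^{\bm N}}{\bm{\cV}^{\half,-\half}_\theta} \lesssim \left(\max_{i\in\{1,\hdots,M\}} N_i^{-s}\right)\norm{\bm\Lambda}{\bm\cV^{s_1,s_2}_\theta}.
\end{align*}
The proof then closes by invoking well-posedness once more: since Theorem \ref{thm:welposed} holds for every $s\in\IR$, the inverse $\bm{\mathcal M}^{-1}$ is bounded on $\bm\cV^{s_1,s_2}_\theta$, whence $\norm{\bm\Lambda}{\bm\cV^{s_1,s_2}_\theta}\lesssim\norm{\bm\varrho}{\bm\cV^{s_1,s_2}_\theta}$. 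I expect the main obstacle to be the bookkeeping in the reduction step: one must verify that the two different pullback conventions — unweighted for the nonnegative first order, weighted for the negative second order — are exactly compensated by the two different discrete bases ($\widetilde\cE$ versus the normalized $\cE$), so that both boundary problems land on the \emph{same} reference space $\widehat\cE^{N_i}_\theta$ for which \eqref{eq:err1} is available; the remaining estimates are routine. A minor point to confirm is that the threshold $\bm N^\star$ furnished by \eqref{eq:quasiopti} can be chosen so that the stated range $N_i>N^\star_i$ governs all $M$ components simultaneously.
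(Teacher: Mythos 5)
Your proposal is correct and follows essentially the same route as the paper's proof: quasi-optimality \eqref{eq:quasiopti}, componentwise reduction to the reference estimate \eqref{eq:err1} via the pullback definitions of the boundary norms and discrete spaces, and a final application of well-posedness to replace $\norm{\bm\Lambda}{\bm{\cV}^{s_1,s_2}_\theta}$ by $\norm{\bm\varrho}{\bm{\cV}^{s_1,s_2}_\theta}$. The only difference is that you spell out the bookkeeping (the weighted pullback for the negative-order norm matching the normalized basis $\cE^{N_i}_{\theta,\Gamma_i}$, and the shift $\phi\mapsto e^{-\imath\theta t}\phi$, which is a norm equivalence rather than an isometry, with constants depending only on $\theta$ and $s$) that the paper compresses into ``by definition of our continuous and discrete spaces.''
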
 

\begin{proof}
For any $\bm{\Xi}^{\bm{N}} \in \mathbb{E}^{\bm{N}}_\theta$, we denote ${\Xi}^{N_i}_i = (\xi^{N_i}_{i}, \zeta^{N_i}_{i})^t$
for all $i\in\{1,\hdots,M\}$, so that 
\begin{align*}
\norm{\bm{\Lambda} - \bm{\Xi}^{\bm{N}}}{\bm{\cV}^{\half,-\half}_\theta}^2 = 
\sum_{i=1}^M  \norm{\lambda_i -\xi^{N_i}_{i}}{H^{\half}_\theta(\Gamma_i)}^2
+\norm{\mu_i -\zeta^{N_i}_{i}}{H^{-\half}_\theta(\Gamma_i)}^2.
\end{align*}
By definition of our continuous and discrete spaces together with
\eqref{eq:err1}, we see that for all $i\in\{1,\hdots,M\}$, one deduces
\begin{align*}
\norm{\lambda_i -\xi_i^{N_i} }{H^{\half}_\theta(\Gamma_i)}^2\lesssim N_i^{-2s} \norm{\lambda_i}{H^{s+\half}_\theta(\Gamma_i)}^2,\quad
\norm{\mu_i -\zeta_i^{N_i} }{H^{-\half}_\theta(\Gamma_i)}^2 \lesssim N_i^{-2s} \norm{\mu_i}{H^{s-\half}_\theta(\Gamma_i)}^2,
\end{align*}
 where the unspecified constant depends only on $\Gamma_i$. Hence, 
 \begin{align*}
\norm{\bm{\Lambda} - \bm{\Xi}^{\bm{N}} }{\bm{\cV}^{\half,-\half}_\theta}^2 \lesssim 
\left({\max_{i\in\{1,\hdots,M\}}} N_i^{{-}2s}\right)\norm{\bm{\Lambda}}{\bm{\cV}^{{s}_1,{s}_2}_\theta}^2.
\end{align*}
Since the problem is well posed, we obtain
\begin{align*}
\norm{\bm{\Lambda} - \bm{\Xi}^{\bm{N}}}{\bm{\cV}^{\half,-\half}_\theta}^2 \lesssim 
\left({\max_{i\in\{1,\hdots,M\}}} N_i^{{-}2s}\right) \norm{\bm{\varrho}}{\bm{\cV}^{{s}_1,{s}_2}_\theta}^2,
\end{align*}
where the unspecified constant now also depends on the wavenumbers $\{k_i\}_{i=0}^{M}$.
\end{proof}

\begin{rmrk}
Theorem \ref{thm:estimate} states that the proposed spectral Galerkin method has a similar performance to the Nyström method, since if interfaces belong to $\mathcal{C}^{\infty}$ then one obtains super-algebraic convergence (commonly observed with the Nyström method \cite{zhang2019fast}). The super-algebraic convergence rate of the Nyström method for the transmission problem on a bounded object in two dimension was rigorously proved in \cite{nystromconvergence}. Similar convergence results for quasi-periodic problems using the
Nyström scheme are, to the best of our knowledge, not available.

%Is interesting to notice that the proof on \cite{nystromconvergence} follows ideas similar to the one that we used (both are heavly inspired by the book of Saranen \cite{saranen2013periodic}), thus it should be possible to obtain the convergence results for Nyström discretizations of quasiperiodic problems by following \cite{nystromconvergence} and using the kernel decomposition presented in Section \ref{ssec:compacteness}, to the best of our knowledge this has not been done. 
\end{rmrk}

\begin{rmrk}
It follows from Remark \ref{rem:limitedreg2} that we can obtain convergence of limited order if the interfaces are of class $\mathcal{C}^{r,1}$ with $r \in [1,\infty)$. 
\end{rmrk}

\subsection{Implementation}
We continue with an overview of the procedure employed to
compute the approximation $\bm{\Lambda^N}$. For a given $N \in \IN$ and $l$, $m\in\IZ$ such that $-N\leq\modulo{l},\modulo{m}\leq N$, integrals 
\begin{align}
\label{eq:integraltypes}
I^1_l := \int_0^{2\pi} f(t) e^{-\imath lt} \;\d\!t\quad\text{and}\quad
I^2_{l,m}:= \int_0^{2\pi} \int_0^{2\pi} F(s,t) e^{-\imath ls} e^{\imath mt}  \;\d\!t \;\d\!s,
\end{align}
where $f$ and $F$ are smooth periodic and bi-periodic functions,
respectively, can be computed to exponential accuracy through the FFT to construct trigonometric
interpolations of the corresponding functions 
(cf.~\cite[Theorem 8.4.1]{saranen2013periodic}). Since the associated kernels
correspond to smooth bi-periodic functions, the computation of the
block matrices $\mathsf{B}_{i,j}$ on \eqref{eq:systemjumps} is performed
in this way.

In terms of computational cost, the set of integrals $\{I^1_l\}_{l=-N}^{N}$ involves $2N+1$ evaluations of the function $f$ and one FFT aplication to a vector of length $2N+1$, whence the total computational cost is $O\left((2N+1) \log(2N+1)\right)$\footnote{This is the classical estimation of the computational cost for the FFT.} arithmetic operations --plus $2N+1$ function evaluations-- to compute the $2N+1$ integrals. For the set of integrals $\{I^2_{l,m}\}_{l,m=-N}^N$, we require $(2N+1)^2$ evaluations of the function $F$, and $2(2N+1)$ FFTs for vectors of length $2N+1$, yielding a
cost of $O\left(2(2N+1) \log(2N+1)\right)$ arithmetic operations (plus $(2N+1)^2$ function evaluations).

On the other hand, the block matrices $\mathsf{A}_i$ in
\eqref{eq:systemjumps} consist of differences of the self-interaction operators on 
$\Gamma_i$ for the four BIOs. While the difference of two
operators is compact---the resulting kernel is smoother than
that associated to a single evaluation of the same operator---the kernel is not
arbitrarily smooth, even if the geometry is. Consequently,
a deeper analysis is required before applying classical algorithms for the computation
of Fourier transforms. 

Let us consider, as an illustrative example, the weakly singular operator. We are required to
compute integrals such as
\begin{align*}
 \int_0^{2\pi} \int_0^{2\pi} \widehat{G}^k_\theta(s,t) e^{-\imath ls} e^{\imath mt} \;\d\!t \;\d\!s,
\end{align*}
where $\widehat{G}^k_\theta$ is as in \eqref{eq:vkernel}. Decomposing $\widehat{G}^k_\theta$ as shown in \eqref{eq:vsplit}, we obtain two integrals,
\begin{align*}
I^S_{l,m} := \int_0^{2\pi}\int_0^{2\pi} S(t-s) J^k_\theta(s,t) e^{-\imath ls} e^{\imath mt} \;\d\!t \;\d\!s,\quad
I^R_{l,m} := \int_0^{2\pi}\int_0^{2\pi} R^k_\theta(s,t) e^{-\imath ls} e^{\imath mt} \;\d\!t \;\d\!s.
 \end{align*}
Since $R^k_\theta(s,t)$ is smooth and periodic (see Section \ref{ssec:compacteness}), $I^R_{l,m}$ may be computed
via the FFT. To compute $I^S_{l,m}$, we use
the expansion (\emph{c.f.}~\cite[Equation 12]{HU1995340}):
\begin{align*}
S(t-s) = \sum_{\substack{n=-\infty \\ n \neq 0}}^\infty \frac{1}{4 \pi n} e^{\imath n(t-s)}.
\end{align*}
Thus, 
 \begin{align}\label{eq:smoothexpIS}
 I^S_{l,m} = \sum_{\substack{n=-\infty \\ n \neq 0}}^\infty \frac{1}{4 \pi n} \int_0^{2\pi} \int_0^{2\pi} J^k_\theta(s,t) e^{-\imath (l+n)s} e^{\imath (m+n)t} \;\d\!t \;\d\!s.
 \end{align}
Since $ J^k_\theta(s,t)$ is smooth and periodic, each of the integrals of
the right-hand side is easy to compute. Moreover, the terms in the series in \eqref{eq:smoothexpIS} decay exponentially and the series may be truncated at the cost of
a small approximation error. Furthermore, the sum in
\eqref{eq:smoothexpIS} may be understood as a discrete convolution, allowing it to be computed by multiplying the corresponding Fourier transforms (see \cite{HU1995340} for details).

The computational cost of computing $\{I^S_{l,m}\}_{l,m=-N}^N$ and $\{I^R_{l,m}\}_{l,m=-N}^N$ is dominated by the latter set of integrals, since it involves $2(N+1)^2$ evaluations of the quasi-periodic Green's function, which is done following \cite{bruno2014rapidly}. The evaluation cost of the quasi-periodic Green's function corresponds to $(2N+1)^2(2N'+1)$ evaluations of the Hankel function, with $N'>N$ is a truncation parameter for the series in \eqref{eq:qpgreenfunc}\footnote{The value of $N'$ has to be chosen depending of $k_0$, but typically one can assume that it need not be greater than $2N$, for $N$ large enough to ensure convergence.}.
Meanwhile, the total cost for $I^S_{l,m}$ is proportional to $(2N+1)\log( 2N+1)$.

For the operators
$\mathsf{K}_\theta^k$ and $\mathsf{K}'^k_\theta$, a similar technique
can be applied using \eqref{dl:splits}. The integrals corresponding to the hyper-singular BIO are approximated
by first using the integration-by-parts formula in Lemma \ref{lemma:intpartsW},
reducing it to two different integrals which are then approximated as those
corresponding to the weakly-singular BIO.
%
%Evaluating smooth kernels is not straightforward as they consist of infinite sums. For their approximation, we rely
%on on the strategy presented in \cite{bruno2014rapidly} for the two-dimensional quasi-periodic Green's function.

Considering $M>1$ interfaces, $2N+1$ degrees of freedom on each interface and $N'$ proportional to $N$ the total cost of the matrix assembly process can be estimated as
$O(N^3M)$ Hankel function evaluations and $O(MN^2\log N)$ arithmetic operations. We point out that the cost could be reduced drastically by constructing an accurate algorithm to approximate the Hankel functions by pre-computing some values. 

\begin{rmrk}
We have restricted ourselves to the analysis of the semi-discrete case, that is, we do not take into consideration the error coming from the approximation of the integrals for the error bound in Theorem \ref{thm:estimate}. However, it is not difficult to incorporate it. Assuming that the parametrizations $\{\bz_i\}_{i=1}^M$ correspond to Jordan curves of class $C^\infty$ and using the aliasing proprieties of the Fourier basis \cite[Chapter 4]{spectralinMatlab} and Lemma \ref{lemma:FourierCofs} we have that the approximation error for the computation of the required integrals is $O(N^{-\ell-1})$, with $2N+1$ being the number of degrees of freedom per interface and $\ell$ an arbitrarily large integer. Then, the fully discrete error can be obtained by an {application of Strang's lemma} \cite[Section 4.2.4]{Sauter:2011}, from where it follows that the behaviour of the fully discrete error, with respect to $N$, is the same as in Theorem \ref{thm:estimate}. 
\end{rmrk}

%%%%%%%%%%%%%%%%%%%%%%%%%
\section{Numerical Examples}
\label{sec:numexam}

%%%%%%%%%%%%%%%%%%%%%%%%%

We now showcase computational experiments {verifying} the convergence estimates found in Theorem \ref{thm:estimate}.
The implementation of the aforementioned
algorithms was carried through a C++ cpu-only library. All the experiments ran on a Intel I7-4770@3.4GHZ {processor} with 8 threads. {The code was compiled with gcc 4.9.4, openmp and O2 flags on.}

\subsection{Code Validation}

{We begin by considering the simple case of a grating with two media separated by a single horizontal line segment acting as its layer}. Hence, using the following expansion of the Green's function \cite[Proposition 4.2]{APJ:2019}:
$$G^k_\theta (\bx,\by) = \frac{\imath}{4 \pi} \sum_{j\in\IZ} \frac{1}{\beta_j}e^{\imath \beta_j \modulo{x_2-y_2}-\imath j_\theta(y_1-x_1)} \text{  for all } \bx, \by \in \IR^2,$$
it is possible to assemble the matrix analytically. The matrix $\bm{\mathcal{M}}$ is then composed of only block diagonal terms. Since the right-hand side only has two non-null components\footnote{One for the Dirichlet trace of the incident wave and another for the Neumann trace.}, {only the corresponding components for the solution are non-zero, yielding a closed form for the solution.}

In order to test the implementation, we consider an artificial (harder) problem by including ghost domains, i.e., we add extra  smooth (ghost) layers that separate domains with the same refraction index. Hence, the solution is the same as if these additional domains did not exist and has a closed form, as before. The results for different ghost layers are reported in Figure \ref{Fig:ConvgGhost}. 
\begin{figure}[t]
  \subfigure[
	Geometry 
  ]{   
    \includegraphics[width=.47\textwidth]{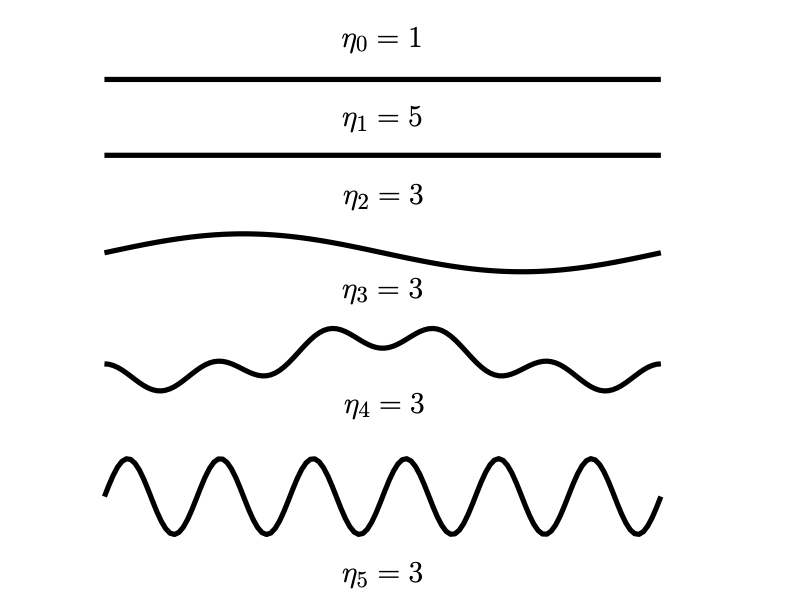}
  }
  \subfigure[
   Convergence Plot
  ]{
  \includegraphics[width=.47\textwidth]{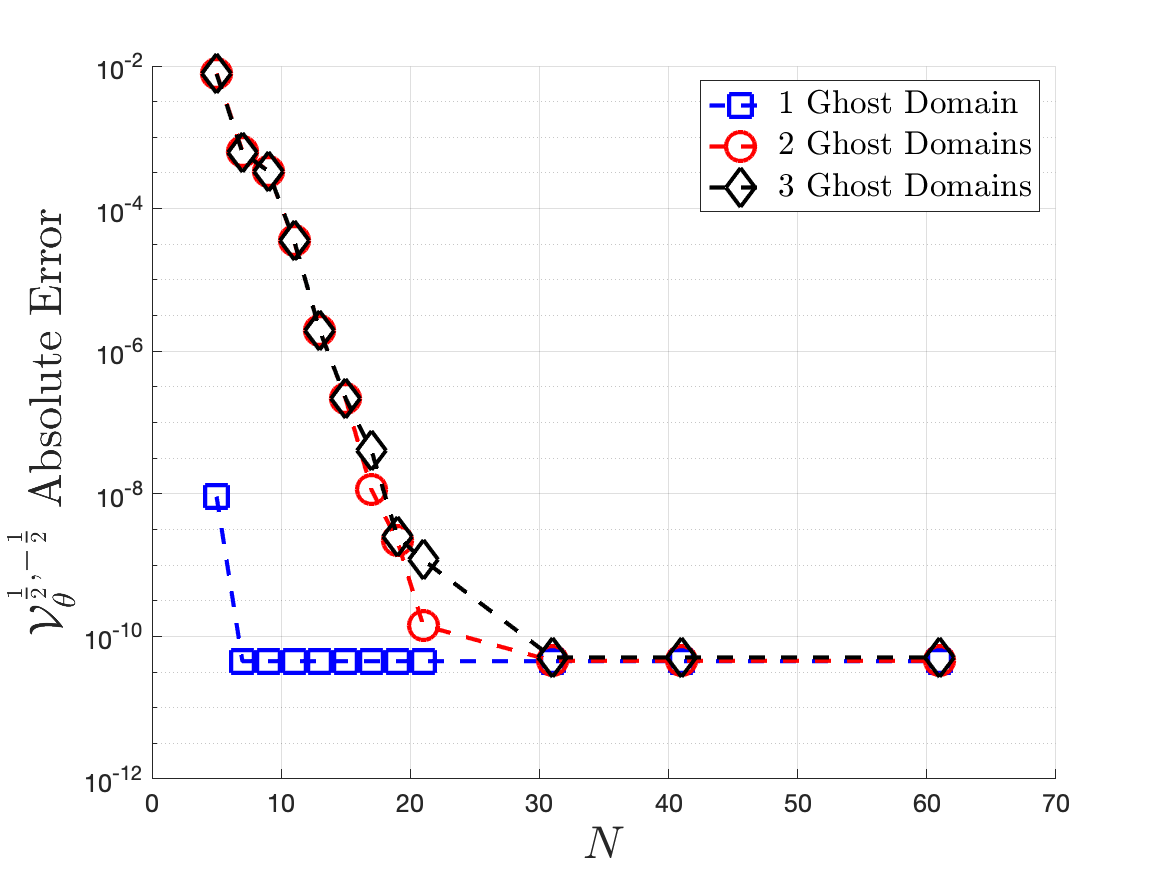}
  }
  \caption{ 
Subfigure (a) shows the problem geometry. Subfigure (b) shows the error in the $\bm{\cV}^{{\half,-\half}}_\theta$ norm with respect to the analytic solution. We have included results for different numbers ghost layers (1,2 and 3, respectively), i.e., the first experiment considers only the first 3 layers (counting downwards), the second one considers the first 4 layers and the third considers all 5 layers.
}
\label{Fig:ConvgGhost}
\end{figure}

We also display the convergence behaviour of the method for interfaces with limited regularity by repeating the previous experiment (same incident field) with one ghost domain and an interface given by 
\begin{align*}
\bz_3(t) = (t,a|\sin(t)|^p+b),
\end{align*}
where $a,b$ are real numbers that scale the interface, and $p$ is an odd integer that determines the smoothness {degree} of the interface. In particular, $\bz_3$ is in $\mathcal{C}^{p-2,1}$ or, more precisely, $\mathcal{C}^{p-1}$ with an integrable $p$-th derivative. Results are reported in Figure \ref{Fig:Convgp}. 

For all experiments in this section, the frequency is chosen as $k_0=1$ and the incidence angle is $0.47$ radians.

\begin{figure}[b]
\centering
      \includegraphics[width=.55\textwidth]{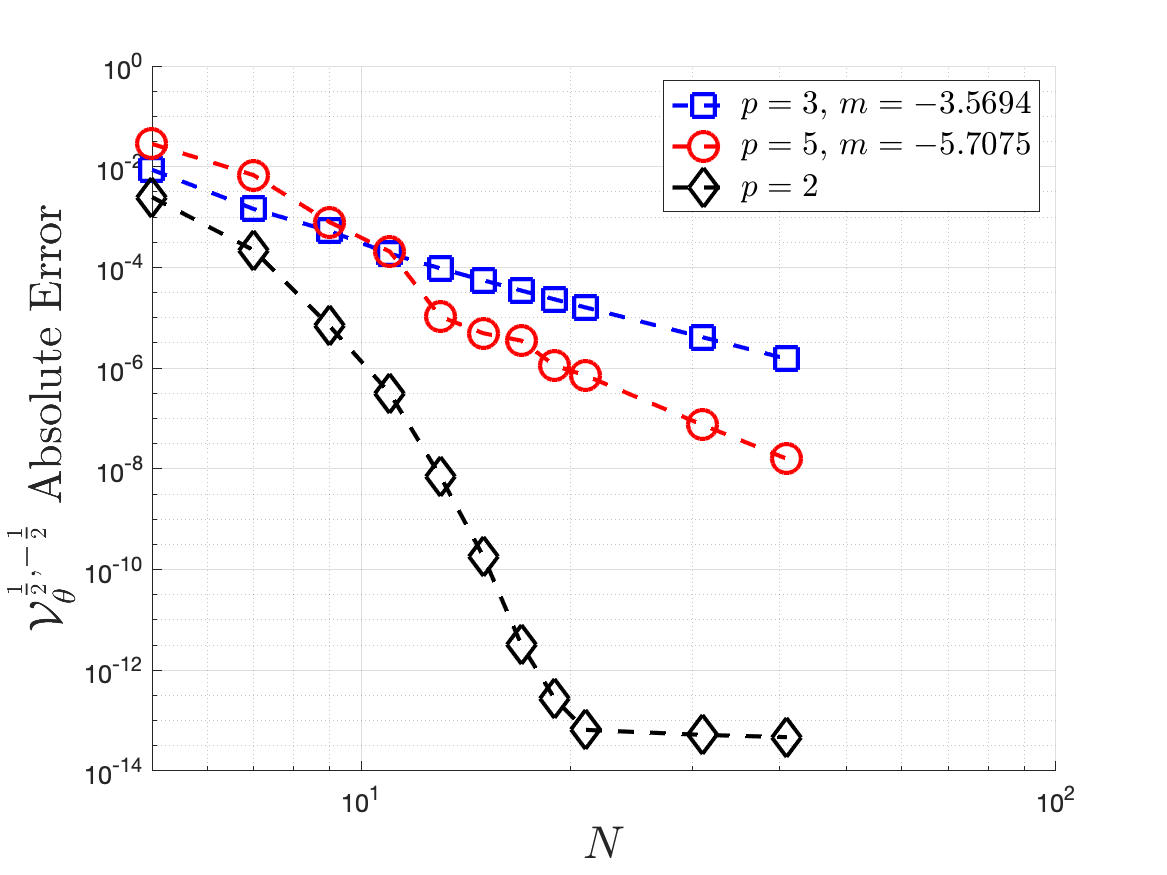}   
  \caption{ 
Error in the $\bm{\cV}^{{\half,-\half}}_\theta$ norm with respect to the analytic solution. The legend indicates an estimate of the slope of the error convergence curves for different values of $p$ (degrees of smoothness). Classically, error convergence estimates for spectral methods indicate the slope to be at least equal to $p$. We also consider the case $p=2$, where the extra layer is $\mathcal{C}^\infty$ and the super-algebraic convergence rate is observed. 
}
\label{Fig:Convgp}
\end{figure}

%%%%%%%%%%%%%%%%%%%%%%%%%%%%%%%%%%%%%%%%
\subsection{Convergence results}
%%%%%%%%%%%%%%%%%%%%%%%%%%%%%%%%%%%%%%%%
We now consider a smooth geometry composed of the 12 layers
and varying refraction indices. Two different scenarios for the choice of indices are employed, reported in Table \ref{tb:Ex1} ($\eta_i^{(1)}$ and $\eta_i^{(2)}$ for the first and second cases, respectively). We also consider three different wavenumbers for the incident wave, $k_0 = 2.8,$ $14$ and $28$. 
Convergence
results in the energy norm for the solution of Problem \ref{prob:discBIE} for the different cases of parameters and wavenumbers are reported in Figure \ref{Fig:Convg}, where exponential convergence is observed for all considered scenarios, as expected. All errors were computed with respect to an overkill solution, with approximately 50 more bases per interface than the last plotted point for each curve. The incidence angle is, again, $0.47$ radians.
%\jp{While is not an easy task to establish the number of basis needed, for a desire accuracy, in terms of the frequency $k_0$ and the refraction indices $\eta_i$, it seems from the presented experiments that they should be chosed proportional to the maximum value $k_i = \eta_i k_0$.} 

\begin{table}%[H]
\centering
\begin{tabular}{l|llllllllllll}
         & 1    & 2    & 3    & 4    & 5    & 6    & 7    & 8    & 9    & 10   & 11   & 12   \\ \hline
$\eta^{(1)}_i$ & 4.7 & 4.2 & 4.8 & 3.6 & 1.1 & 4.4 & 4.7 & 3.7 & 4.0 & 3.9 & 2.6 & 3.6
\\ 
$\eta^{(2)}_i$ & 4.7 & 8.4 & 4.8 & 7.2 & 1.1 & 8.8 & 4.7 & 7.4 & 4.0 & 7.8 & 2.6 & 7.2
\end{tabular}
\caption{Value of the refraction indices $\{\eta^{(1)}_i\}_{i=1}^{12}$
and  $\{\eta^{(2)}_i\}_{i=1}^{12}$ (corresponding to the two considered cases) for the grating
in Figure \ref{fig:PlotEx1} (counting downwards). }
\label{tb:Ex1}
\end{table}

\begin{figure}[ht!]
  \subfigure[
	Geometry 
  ]{   
    \includegraphics[width=.47\textwidth]{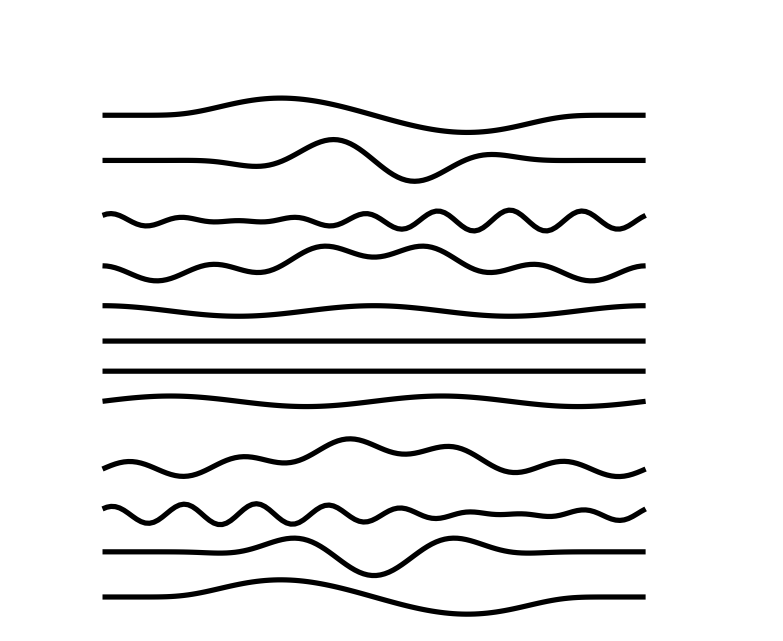}
  }\hspace{0.2cm}
  \subfigure[
   $k_0 = 2.8$
  ]{
  \includegraphics[width=.47\textwidth]{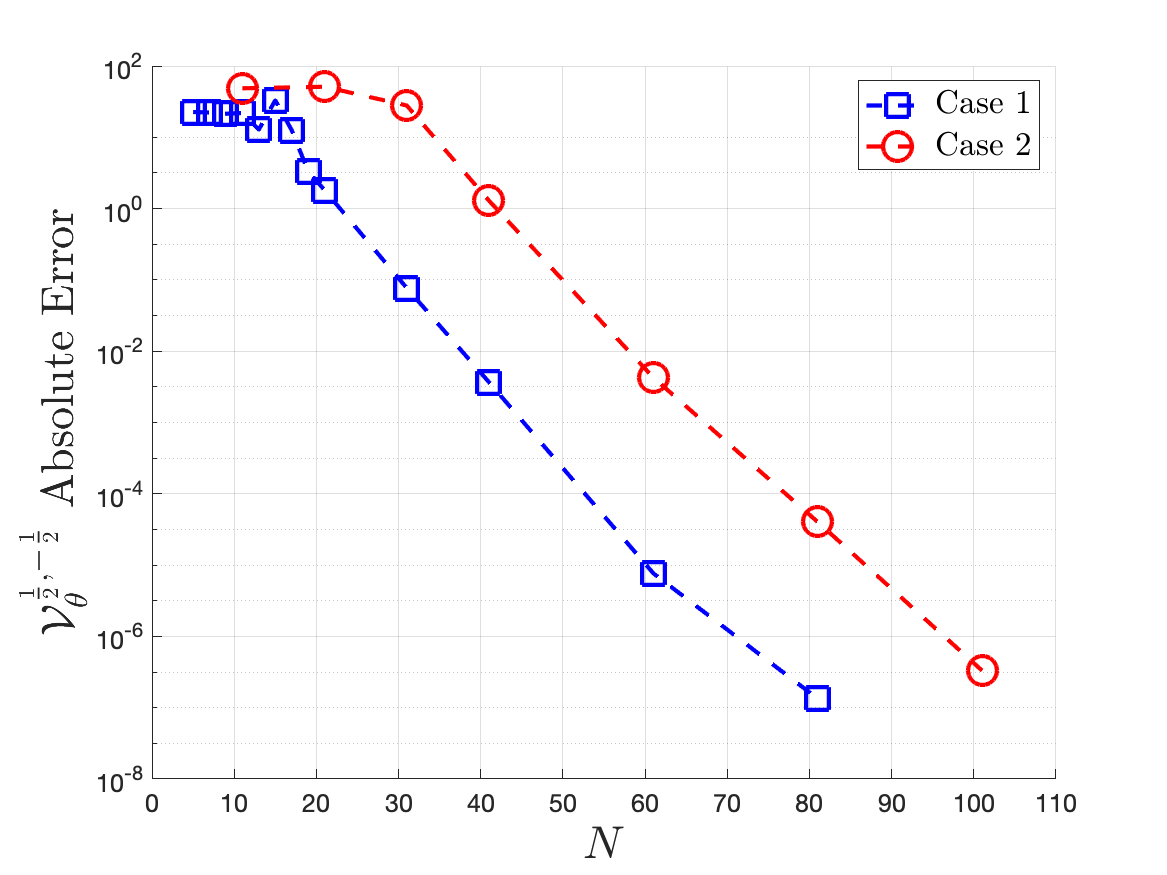}
  }
\subfigure[
   $k_0 = 14$
  ]{
  \includegraphics[width=.47\textwidth]{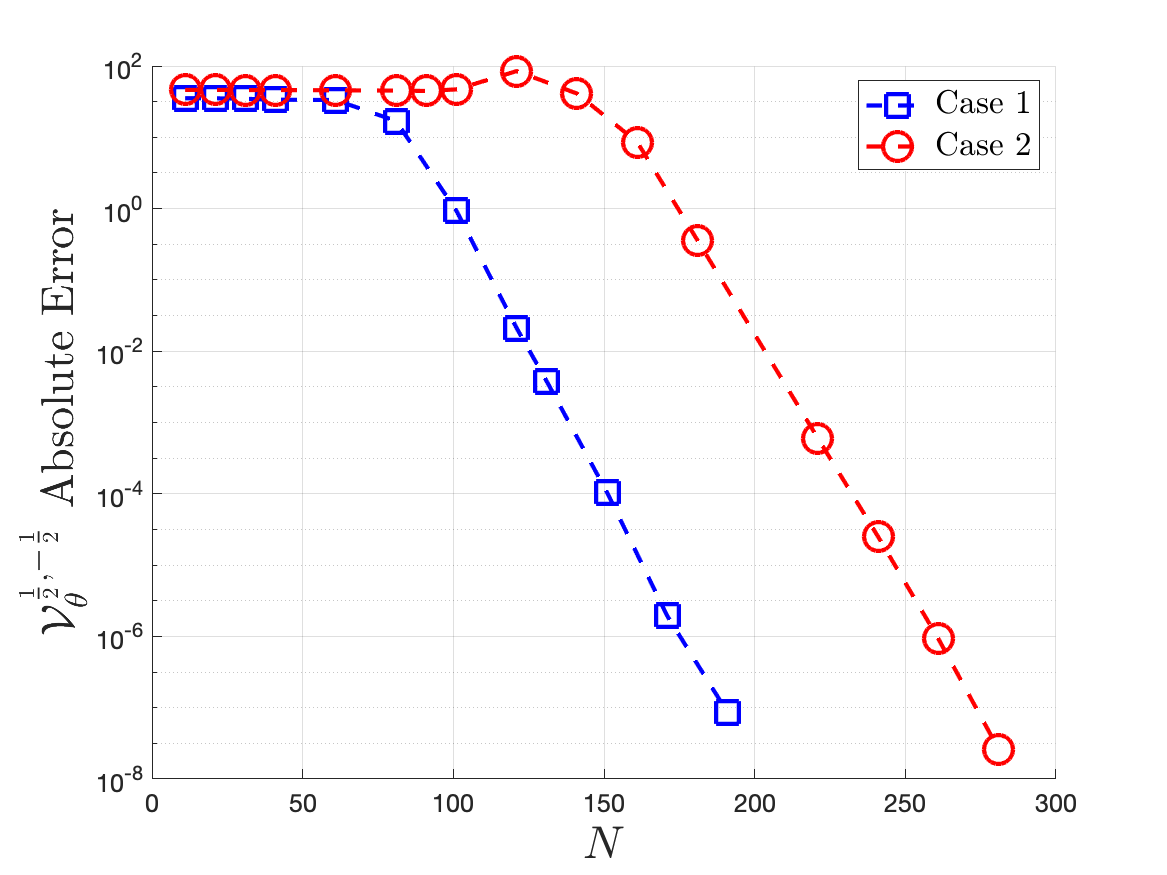}
  }\hspace{0.2cm}
\subfigure[
   $k_0 = 28$
  ]{
  \includegraphics[width=.47\textwidth]{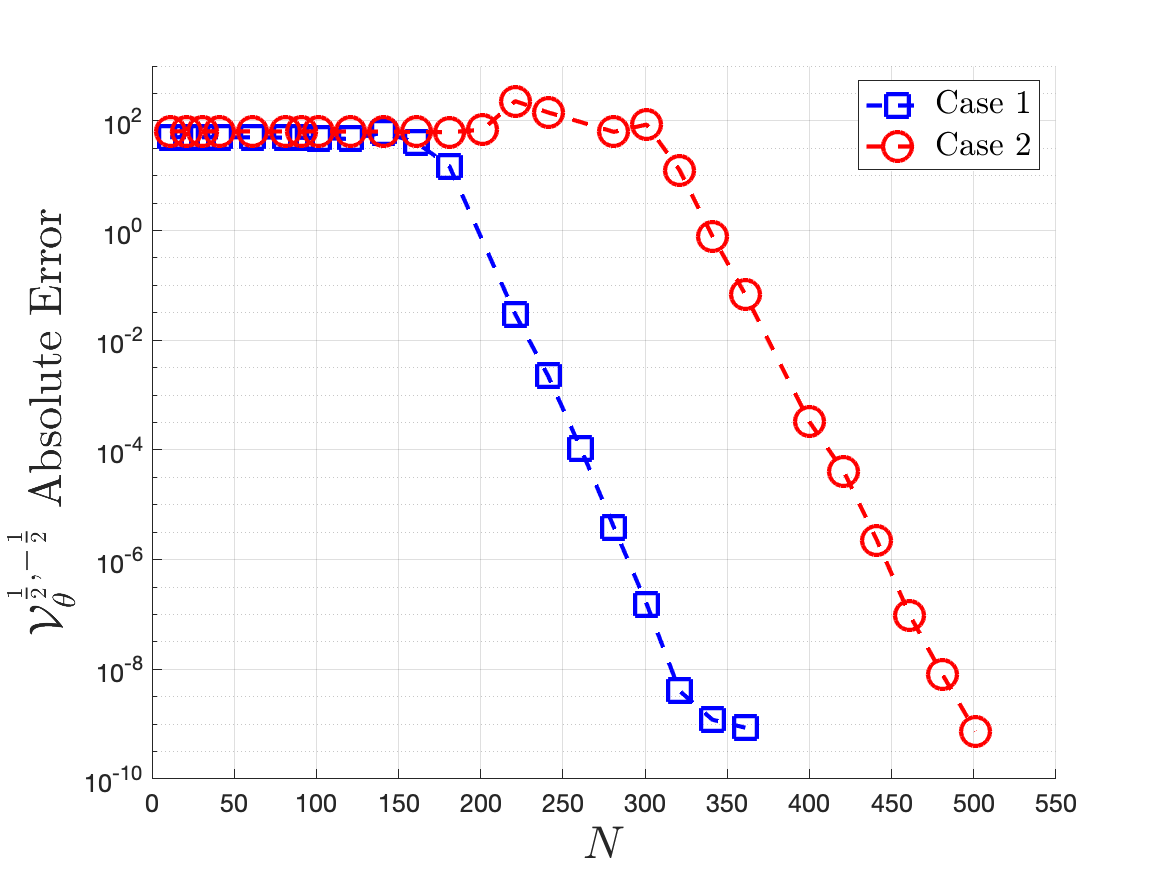}
  }
  \caption{
Subfigure (a) shows the problem geometry (with 12 layers). Subfigures (b), (c) and (d) display the errors (in the corresponding energy norm) for the different values of $k_0$, i.e., $2.8$, $14$ and $28$, respectively. Each of these subfigures present error convergence curves for the two scenarios of refraction indices considered and specified in Table \ref{tb:Ex1}. Notice that the 
curves in red---corresponding to parameters $\eta_i^{(2)}$ in Table \ref{tb:Ex1}---display a longer preasymptotic regime before convergence is observed for all considered values of $k_0$, seemingly due the presence of layers with higher wavenumbers (see Remark \ref{rmk:conv_N}).}

\label{Fig:Convg}
\end{figure}
Finally, in Figure \ref{fig:PlotEx1} we present an illustration of the total field corresponding to the refraction indices given in Table \ref{tb:Ex1}.

\begin{figure}[ht!]
\centering
\subfigure[$k_0=2.8$.]{
\includegraphics[width=.3\linewidth]{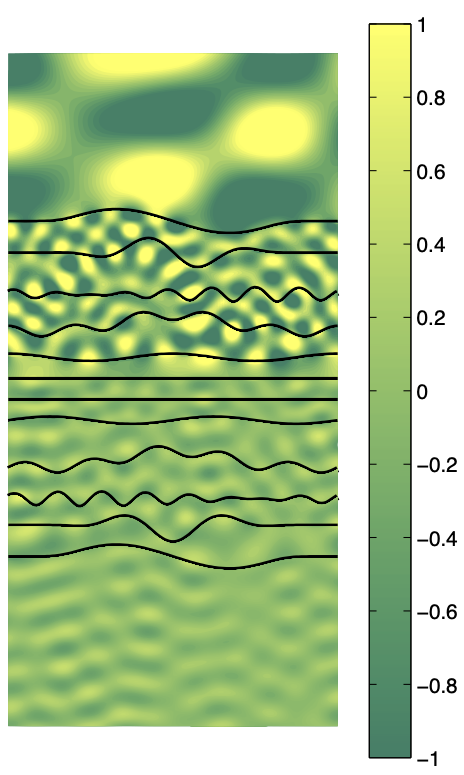}
\label{subfig:errP1}
}
\hspace{1cm}
\subfigure[$k_0=14$.]{
\includegraphics[width=.3\linewidth]{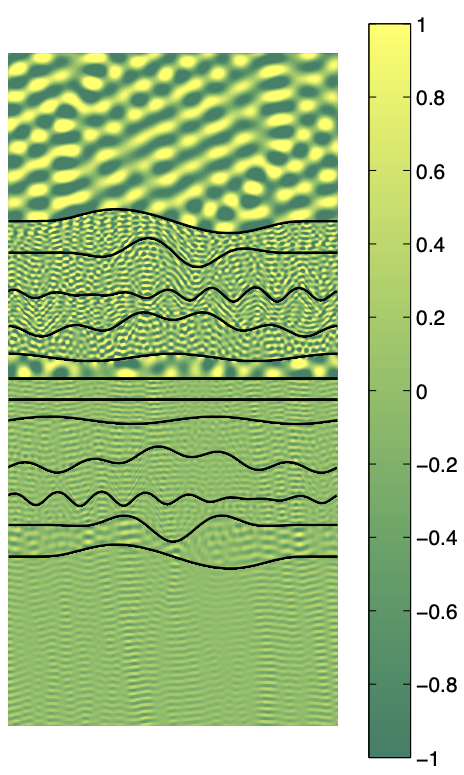}
\label{subfig:errP2}
}
\hspace{1cm}
\subfigure[$k_0=28$.]{
\includegraphics[width=.3\linewidth]{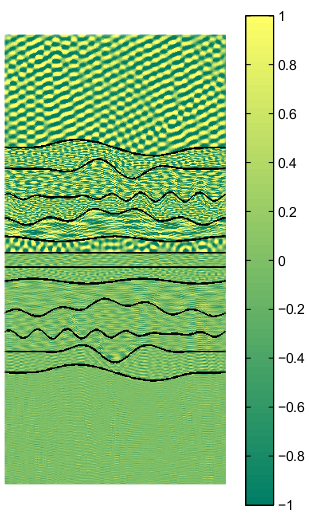}
\label{subfig:errP3}
}
\caption{Real part of the total wave ($u^{\text{(tot)}}=u^{\text{(sc)}}+u^{\text{(inc)}}$) for
each different value of $k_0$, namely $2.8$, $14$ and $28$.
The refraction indices on each layer are those indicated on Table
\ref{tb:Ex1}. The incidence angle is again $0.47$. }
\label{fig:PlotEx1}
\end{figure}
\begin{rmrk}
\label{rmk:conv_N}
Though establishing the relation between the parameters---$\{\eta_{i}\}_{i=0}^{M}$ and $k_0$---and the number of basis elements required to attain a certain desired accuracy is not straightforward, our experiments suggest that $N$ should be chosen
proportional to the maximum wavenumber $k_{\text{max}}:=\max_{i\in\{0,\hdots M\}}k_i$.
\end{rmrk}
%%%%%%%%%%%%%%%%%%%%%%%%%
\section{Conclusions}
\label{sec:conclusions}
%%%%%%%%%%%%%%%%%%%%%%%%%
We have proposed a fast spectral method for the efficient representation, through surface potentials based on the quasi-periodic Green's function,
for the solution of the Helmholtz equation with transmission boundary conditions on a periodic domain. Theorem \ref{thm:estimate}, we obtained convergence estimates for 
the discrete approximation of the corresponding boundary data,
and found that discrete solution converge at a super-algebraic rate to continuous solutions of the considered boundary integral equation. Though, we focused
on the Helmholtz transmission problem, our approximation
results and convergence estimates can be easily extended to
other boundary integral equations on quasi-periodic Sobolev spaces whenever
the formulation is well posed.
We avoided Rayleigh-Wood anomalies
from our analysis since the series in \eqref{eq:qpgreenfunc} fails
to converge for said frequencies and, for the same reason, our previous results from \cite{APJ:2019} exclude them as well. 

Though similar numerical results are known for the Nyström Method, theoretical results confirming the observed convergence
rates are scarce \cite{nystromconvergence}, which is an advantage of Galerkin discretizations
such as that presented in this article. Moreover, the convergence rate for the proposed discretization is equal to that expected of Nyström methods, so that it is numerically competitive with them 
while inheriting the theoretical benefits of a Galerkin discretization. 

Future work considers: (i) including Rayleigh-Wood anomalies to our analysis, (ii) extending our results to three
dimensional Helmholtz equations and Maxwell's equations on periodic domains and (iii) applications in uncertainty quantification \cite{silva2018quantifying} and shape optimization \cite{Aylwin2020}.

\bibliographystyle{acm}
\bibliography{max}

\end{document}